\newtheorem{theorem}{Theorem}[section]
\newtheorem{lemma}[theorem]{Lemma}
\newtheorem{proposition}[theorem]{Proposition}
\newtheorem{corollary}[theorem]{Corollary}
\newtheorem{definition}[theorem]{Definition}
\newtheorem{question}[theorem]{Question}
\def\QSet{\mbox{\rm\kern.24em
\vrule width.03em height1.48ex depth-.051ex \kern-.26em Q}}
\def\Z{{\mbox{\rm\kern.25em
\vrule width.03em height0.57ex depth0ex
\kern.033em
\vrule width.03em height1.52ex depth-0.96ex \kern-.338em Z}}}
\def\Z{{\bf Z}}
\def\N{{{\mbox{\rm I\kern-.2em N}}}}
\def\R{{\mbox{\rm I\kern-.22em R}}}
\def\P{{\bf P}}
\def\Q{{\bf Q}}
\def\T{{\mbox{\rm I\kern-.22em T}}}
\def\supp{{\rm supp}}
\def\size{{\rm size}}
\def\diam{{\rm diam}}
\def\energy{{\rm energy}}
\def\sgn{{\rm sgn}}
\def\n{{\bf n}}
\def\C{{\cal{C}}}
\def\G{{\cal{G}}}
\def\I{{\cal{I}}}
\def\l{{\bf l}}
\def\dist{{\rm dist}}
\def\111{\gamma}
\def\be#1{\begin{equation}\label{#1}}
\def\bas{\begin{align*}}
\def\eas{\end{align*}}
\def\bi{\begin{itemize}}
\def\ei{\end{itemize}}
\newenvironment{proof}{\noindent {\bf Proof} }{\endprf\par}
\def \endprf{\hfill  {\vrule height6pt width6pt depth0pt}\medskip}
\def\emph#1{{\it #1}}
\title{Multi-linear multipliers associated to simplexes of arbitrary length}
\author{Camil Muscalu}
\address{Department of Mathematics, Cornell University, Ithaca, NY 14853}
\email{camil@@math.cornell.edu}
\author{Terence Tao}
\address{Department of Mathematics, UCLA, Los Angeles, CA 90024}
\email{tao@@math.ucla.edu}
\author{Christoph Thiele}
\address{Department of Mathematics, UCLA, Los Angeles, CA 90024}
\email{thiele@@math.ucla.edu}
\begin{document}

\begin{abstract}
In this article we prove that the $n$ - linear operator whose symbol is the characteristic function of the simplex
$\Delta_n = \xi_1<...<\xi_n$ is bounded from $L^2\times...\times L^2$ into $L^{2/n}$, generalizing in this way our previous work on the ``bi-est'' operator
\cite{mtt-4}, \cite{mtt-5} (which corresponds to the case $n=3$) as well as the Lacey - Thiele theorem on the bi-linear Hilbert transform \cite{laceyt1}, \cite{laceyt2} 
(which corresponds to the case $n=2$).
\end{abstract}

\maketitle

\section{Introduction}

The present paper is a natural continuation of our previous work in \cite{mtt-4} and \cite{mtt-5}. In those articles we studied the
$L^p$ boundedness properties of a tri-linear operator $T_3$ defined by the formula

\begin{equation}\label{the biest}
T_3(f_1, f_2, f_3)(x) = \int_{\xi_1<\xi_2<\xi_3}
\widehat{f_1}(\xi_1)
\widehat{f_2}(\xi_2)
\widehat{f_3}(\xi_3)
e^{2\pi i x (\xi_1+\xi_2+\xi_3)}
d\xi_1 d\xi_2 d\xi_3
\end{equation}
for $f_1, f_2, f_3$ Schwartz functions on the real line. A particular case of our main theorem there is the following
\footnote{The reader familiar with our earlier work in \cite{mtt-1} should notice that our main result in that paper could not handle the case of $T_3$ 
since the singularity of the symbol $\chi_{\xi_1<\xi_2<\xi_3}$ being two dimensional, is too big; roughly speaking, Theorem 1.1. in \cite{mtt-1} allows
one to prove estimates for $n$-linear multipliers as long as the dimension $k$ of the singularity of the symbol satisfies the inequality $k <\frac{n+1}{2}$.}

\begin{theorem}
$T_3$ extends to a bounded tri-linear operator from $L^2\times L^2\times L^2$ into $L^{2/3}$. 
\end{theorem}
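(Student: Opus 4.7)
The plan is to reduce to a discrete model, then run a time-frequency analysis in the spirit of Lacey--Thiele, but with the new feature that \emph{two} independent tile decompositions must be handled simultaneously.

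First I would factor the symbol as
\[
\chi_{\xi_1<\xi_2<\xi_3} \;=\; \chi_{\xi_1<\xi_2}\cdot \chi_{\xi_2<\xi_3},
\]
and decompose each characteristic function via a Whitney-type Fourier series adapted to its singular hyperplane. Each factor contributes a family of ``smooth bumps'' $\schwartz_{I}(\xi_1)\schwartz_{I}(\xi_2)$ (resp.\ $\schwartz_{J}(\xi_2)\schwartz_{J}(\xi_3)$), indexed by dyadic intervals $I,J$, where the bumps on the two variables sharing an interval are adapted to adjacent frequency intervals. Because $\xi_2$ appears in both factors, one obtains a double sum whose building blocks are products of three wave packets $\phi_{P_1}(x)\phi_{P_2}(x)\phi_{P_3}(x)$, where $P_1,P_2,P_3$ are tri-tiles that share a common spatial interval but carry two distinct frequency scales on the middle factor. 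By standard duality and linearization, the problem becomes an estimate for a $4$-linear form
\[
\form(f_1,f_2,f_3,f_4) = \sum_{P} \frac{1}{|I_P|^{1/2}}\prod_{i=1}^{4}\langle f_i,\phi_{P_i}\rangle,
\]
with $f_4$ a dualizing $L^\infty$ function supported on a major subset (Christ's generalized restricted-type reduction).

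The heart of the argument is the tree/size/energy machinery. I would define \emph{bi-trees}: structures organized with respect to \emph{both} of the frequency scales simultaneously, together with appropriate \emph{size} quantities (one for each component $f_i$, measuring the worst $L^2$-BMO average of $\langle f_i,\phi_{P_i}\rangle$ over sub-trees) and \emph{energy} quantities (measuring the total mass of disjoint trees at a given size level). The principal technical lemmas to establish are:
\begin{itemize}
\item a \textbf{tree estimate}, bounding $\form$ restricted to a single bi-tree by the product of the four sizes times the spatial scale;
\item a \textbf{John--Nirenberg type bound}, $\size_i \lesssim \|f_i\|_\infty$ on the appropriate level sets;
\item a \textbf{selection/decomposition algorithm}, extracting a maximal collection of disjoint trees at each dyadic size level and showing $\energy_i \lesssim \|f_i\|_2$.
\end{itemize}
Once these are in hand, summing over size levels yields
\[
|\form(f_1,f_2,f_3,f_4)| \lesssim \prod_{i=1}^{4}\|f_i\|_{2}^{\theta_i}\|f_i\|_{\infty}^{1-\theta_i}
\]
for a suitable range of exponents $\theta_i$, which after multilinear restricted-type interpolation gives the claimed $L^2\times L^2\times L^2 \to L^{2/3}$ bound.

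The main obstacle will be defining the correct size/energy pair for the middle factor $f_2$, since that function is simultaneously tested against wave packets at two different frequency scales (one coming from each of the two characteristic functions in the factorization of the simplex). Unlike the bilinear Hilbert transform, where each function lives at a single scale per tree, here the bi-tree organization must respect both scales at once; the stopping-time construction that extracts disjoint bi-trees while controlling both $\size_2$ parameters is the delicate step, and is precisely what the general theorem of \cite{mtt-1} is unable to handle because of the two-dimensional singularity. All other estimates (tree estimate, energy bounds for $f_1,f_3,f_4$) should proceed along by-now-standard lines.
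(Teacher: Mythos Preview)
Your proposal identifies the right difficulty but resolves it along the wrong axis, and the resolution you sketch (``bi-trees'' with a single stopping-time controlling two sizes on $f_2$) is precisely what the paper's approach is designed to \emph{avoid}.

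The decomposition in \cite{mtt-4}, \cite{mtt-5} (recalled in Section~3 of this paper) does not factor the symbol as $\chi_{\xi_1<\xi_2}\cdot\chi_{\xi_2<\xi_3}$. Instead, one first splits $\Delta_3$ into three regions according to whether $|\xi_1-\xi_2|\ll|\xi_2-\xi_3|$, $|\xi_1-\xi_2|\gg|\xi_2-\xi_3|$, or $|\xi_1-\xi_2|\sim|\xi_2-\xi_3|$, and on (say) the first region writes the multiplier as $\chi_{\xi_1<\xi_2}\cdot\chi_{\frac{\xi_1+\xi_2}{2}<\xi_3}$ with the outer Whitney decomposition in the variable $\xi_1+\xi_2$ rather than $\xi_2$. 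After completion, the building blocks look like
\[
\Phi_1(\xi_1)\Phi_2(\xi_2)\Phi_3(\xi_1+\xi_2)\cdot\Phi'_1(\xi_1+\xi_2)\Phi'_2(\xi_3)\Phi'_3(\xi_1+\xi_2+\xi_3),
\]
and the associated $4$-form factors as an \emph{outer} BHT-type form whose first input is the \emph{output} of an inner BHT acting on $(f_1,f_2)$. The discrete model therefore involves two independent rank-$1$ tile collections $\vec\P$ and $\vec\Q$, nested by the constraint $|I_Q|\ge|I_P|$, and the form is
\[
\sum_{P\in\vec\P}\frac{1}{|I_P|^{1/2}}\,\bigl\langle T^{\text{inner}}_{|I_P|}(f_1,f_2),\Phi_{P_1}\bigr\rangle\,\langle f_3,\Phi_{P_2}\rangle\,\langle f_4,\Phi_{P_3}\rangle .
\]
There is no single ``bi-tile'' and no function tested at two scales simultaneously: $f_2$ lives only at the inner scale, and what appears at the outer scale is the bilinear output $T^{\text{inner}}(f_1,f_2)$. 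The genuinely new technical work is then to estimate the $\size$ and $\energy$ of the sequence $\bigl(\langle T^{\text{inner}}_{|I_P|}(f_1,f_2),\Phi_{P_1}\rangle\bigr)_P$; this is done by dualizing the energy via Lemma~\ref{CP} and controlling the resulting cross term $\sum_{|I_Q|\ge|I_P|}c_Pc_Q\langle\Phi_P,\Phi_Q\rangle$ by a Bessel-type lemma (Lemmas~\ref{bessel} and~\ref{delicatebessel} here). No bi-tree selection algorithm is needed or used.

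Your flat factorization forces exactly the two-scale problem on $f_2$ that you flag as the ``main obstacle'', and the bi-tree stopping time you would need to invent is not described beyond its name. I do not know of a way to make that route work; the hierarchical reformulation via $\xi_1+\xi_2$ is the idea you are missing.
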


Related to $T_3$ is the well known bi-linear Hilbert transform $T_2$ essentially defined by

\begin{equation}\label{bht}
T_2(f_1, f_2)(x) = \int_{\xi_1<\xi_2}
\widehat{f_1}(\xi_1)
\widehat{f_2}(\xi_2)
e^{2\pi i x(\xi_1+\xi_2)}
d \xi_1 d \xi_2.
\end{equation}

From the work of Lacey and Thiele \cite{laceyt1}, \cite{laceyt2} we know in particular the following result

\begin{theorem}
$T_2$ extends to a bounded bi-linear operator from $L^2\times L^2$ into $L^1$. 
\end{theorem}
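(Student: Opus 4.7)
The plan is to follow the time--frequency approach of Lacey and Thiele. The symbol $\chi_{\xi_1<\xi_2}$ is singular along the line $\xi_1=\xi_2$, and the associated modulation invariance (simultaneous modulation of $f_1$ and $f_2$ by a common frequency commutes with $T_2$) places the operator just outside Calder\'on--Zygmund and Coifman--Meyer multiplier theory. The starting point is therefore a Whitney-type decomposition of the symbol away from the diagonal,
\[
\chi_{\xi_1<\xi_2}(\xi_1,\xi_2) = \sum_{k\in\Z} m_k(\xi_1,\xi_2),
\]
where each $m_k$ is a smooth bump of scale $\sim 2^{-k}$ supported at distance $\sim 2^{-k}$ from $\{\xi_1=\xi_2\}$. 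Expanding each $m_k$ in a Fourier series in rescaled coordinates and passing by duality to the trilinear form $\langle T_2(f_1,f_2),f_3\rangle$, one reduces matters to bounding a discrete model sum
\[
\form(f_1,f_2,f_3) = \sum_{P} \frac{1}{|I_P|^{1/2}}\prod_{j=1}^{3} |\langle f_j,\schwartz_{P_j}\rangle|
\]
indexed by a collection of tri-tiles $P=(P_1,P_2,P_3)$, where each $P_j = I_P\times\omega_{P_j}$ is a rectangle of unit area in the phase plane, the frequency intervals $\omega_{P_1},\omega_{P_2},\omega_{P_3}$ are pairwise disjoint and lined up so that their centers sum to approximately zero, and $\schwartz_{P_j}$ is an $L^2$-normalized Schwartz wave packet adapted to $P_j$.

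To control $\form$, I would use the by-now standard $\size$/$\energy$ machinery. For a finite collection $\mathbf{P}$ of tri-tiles and each $j\in\{1,2,3\}$, $\size_j(\mathbf{P})$ measures the worst normalized $\ell^2$-mass of the coefficients $\langle f_j,\schwartz_{P_j}\rangle$ over ``trees'' $T\subset\mathbf{P}$ of tri-tiles sharing a common top, while $\energy_j(\mathbf{P})$ records the maximal total spatial mass of a family of pairwise disjoint trees realizing a fixed size. The two key analytic inputs are a tree estimate
\[
\sum_{P\in T}\frac{1}{|I_P|^{1/2}}\prod_{j=1}^{3}|\langle f_j,\schwartz_{P_j}\rangle| \lesssim |I_T|\,\prod_{j=1}^{3}\size_j(T),
\]
proved by Cauchy--Schwarz on the spatial index and $L^2$-orthogonality of wave packets on the two lacunary indices, together with a John--Nirenberg-type bound $\size_j(\mathbf{P})\lesssim\|f_j\|_\infty$ and a Bessel-type bound $\energy_j(\mathbf{P})\lesssim\|f_j\|_2$. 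One then selects trees greedily to exhaust the tiles at each dyadic size level, uses the energy to count the trees selected, sums the tree estimate, and interpolates the three tuples of bounds to obtain a restricted weak-type $(2,2,\infty)$ estimate on $\form$ (and by the symmetry of the three indices, also $(2,\infty,2)$ and $(\infty,2,2)$). Marcinkiewicz interpolation then yields the strong bound $T_2:L^2\times L^2\to L^1$.

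The main obstacle is precisely the modulation invariance, which forces the full two-parameter (scale and frequency) phase-plane discretization from the outset: one cannot reduce to a single dyadic grid in space, and so the combinatorial tree-selection argument and the orthogonality bounds controlling $\size_j$ and $\energy_j$ against exceptional sets must be carried out uniformly over all frequency translates. Carrying this program out cleanly is essentially the whole content of \cite{laceyt1,laceyt2}, and the $n$-linear version of this framework is what the present paper has to develop.
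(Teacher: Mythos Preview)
Your sketch is correct and is essentially the Lacey--Thiele argument. Note, however, that the paper does not supply its own proof of this statement: Theorem~1.2 is quoted as a known result from \cite{laceyt1,laceyt2}, and the paper's contribution is the $n$-linear generalization. That said, the machinery the paper develops for $T_n$ (Whitney decomposition of $\chi_{\Delta_n}$ via rooted trees in Section~3, reduction to discrete model operators in Section~4, and the size/energy Proposition~\ref{tool} in Section~5) specializes at $n=2$ to exactly the outline you have given---there is a single rooted tree of height~1, the model operator is the standard tri-tile sum, and Proposition~\ref{tool} with $d=3$ is the usual tree-lemma/energy interpolation. So your proposal matches both the cited source and the paper's own framework restricted to $n=2$.
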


The main task of the current paper is to generalize these theorems and prove similar estimates for multi-linear multipliers whose symbols are given
by characteristic functions of simplexes of arbitrary length. More precisely, for any $n\geq 2$ denote by $T_n$ the $n$-linear operator defined by

\begin{equation}\label{multi-est}
T_n(f_1,...,f_n)(x) =
\int_{\xi_1<...<\xi_n}
\widehat{f_1}(\xi_1)...
\widehat{f_n}(\xi_n)
e^{2\pi i x(\xi_1+...+\xi_n)}
d \xi_1...d\xi_n
\end{equation}
where as before $f_1,...,f_n$ are Schwartz functions on $\R$. Our main result is the following
\footnote{Clearly, the correct estimates one is looking for are those of H\"{o}lder type, since when one ignores the symbol
$\chi_{\xi_1<...<\xi_n}$, the corresponding formula in (\ref{multi-est}) becomes the product of the $n$ functions.}

\begin{theorem}\label{mainth}
$T_n$ extends to a bounded $n$-linear operator from $L^2\times...\times L^2$ into $L^{2/n}$.
\end{theorem}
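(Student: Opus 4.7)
The plan is to adapt the time-frequency machinery developed for the biest operator $T_3$ in \cite{mtt-4}, \cite{mtt-5} to accommodate the extra $n-3$ frequency constraints present in the symbol $\chi_{\xi_1<\cdots<\xi_n}$. Because $\chi_{\xi_1<\cdots<\xi_n}=\prod_{j=1}^{n-1}\chi_{\xi_j<\xi_{j+1}}$, the symbol has a tensor-product structure suggesting a decomposition with respect to $n-1$ independent families of dyadic frequency intervals, one family adapted to each individual inequality. Since $L^{2/n}$ is merely a quasi-Banach space for $n\geq 3$, duality is unavailable; the theorem is therefore reduced, via multilinear Marcinkiewicz interpolation, to a restricted weak-type endpoint estimate: for measurable sets $E_1,\dots,E_{n+1}\subset\R$ with $|E_{n+1}|\approx 1$, one constructs a major subset $E'_{n+1}\subset E_{n+1}$ and proves
\[
\Big|\int_{E'_{n+1}} T_n(f_1,\ldots,f_n)\,dx\Big|\;\lesssim\;\prod_{j=1}^n |E_j|^{1/2}
\]
uniformly in $|f_j|\leq\chi_{E_j}$ on a small open neighborhood of the symmetric exponents.

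Next I would perform a Whitney decomposition of the simplex near the diagonal $\xi_1=\cdots=\xi_n$ to convert the rough symbol into a sum of smooth bumps adapted to products of dyadic cubes, followed by a wave packet decomposition in each frequency variable. The result is a model sum
\[
\form_n(f_1,\ldots,f_{n+1})\;=\;\sum_{\vec P\in\Pv}\frac{1}{|I_{\vec P}|^{(n-1)/2}}\prod_{j=1}^{n+1}\langle f_j,\schwartz_{P_j}\rangle
\]
indexed by multi-tiles $\vec P=(P_1,\ldots,P_{n+1})$ whose spatial intervals agree and whose frequency intervals satisfy the $n-1$ Whitney separation conditions inherited from the simplex; the test function $f_{n+1}=\chi_{E'_{n+1}}$ is paired with $P_{n+1}$.

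The analytic core is to estimate $\form_n$ by iterating a tree/forest selection algorithm along each of the $n-1$ frequency directions. One introduces, for every $j\in\{1,\dots,n-1\}$, a notion of $j$-tree, $j$-size and $j$-energy adapted to the $j$-th Whitney family, and proves two now-classical bounds: $j$-size is controlled by a John--Nirenberg type $\mathrm{BMO}$ quantity of the appropriate $f_i$'s, while $j$-energy is bounded by an $L^2$ norm. The multi-tiles are partitioned into $(n-1)$-forests, i.e.\ Cartesian combinations of trees at every scale, on which a single-forest estimate delivers a bound of the shape $\prod_j(\text{size}_j)^{\alpha_j}(\text{energy}_j)^{\beta_j}$; summing over dyadic levels and invoking the size/energy lemmas produces the required $\prod|E_j|^{1/2}$. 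The exceptional set removed to form $E'_{n+1}$ is precisely the region where one of these sizes becomes anomalously large, which guarantees convergence of the geometric series in the dyadic sizes.

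The principal obstacle is the combinatorial bookkeeping of running tree selection simultaneously across $n-1$ distinct partial orders on the multi-tiles: trees in different directions can overlap in complicated ways, and removing a maximal tree in one direction may inflate the size in another. This was handled for $n=3$ in \cite{mtt-4} by a careful two-scale induction; for general $n$ one must arrange the selection so that the constants remain under control as $n$ grows, and the requisite orthogonality inputs are higher-order generalizations of the biest Bessel and $TT^*$ inequalities. Verifying that these inequalities still close the induction, uniformly across all $n-1$ scales and all possible degenerations of the Whitney geometry, is, I expect, the technical heart of the argument.
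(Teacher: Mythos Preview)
Your overall philosophy (restricted weak-type reduction, wave packet discretization, size/energy/tree-selection) is correct, but the model operator you write down is structurally inadequate, and this is not a matter of bookkeeping.

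The singular set of $\chi_{\Delta_n}$ is the union of the hyperplanes $\{\xi_j=\xi_{j+1}\}$, which has codimension $1$, not the full diagonal $\{\xi_1=\cdots=\xi_n\}$ of codimension $n-1$. A Whitney decomposition of $\Delta_n$ relative to its boundary produces cubes of sidelength $\sim\min_j|\xi_j-\xi_{j+1}|$; encoding this with a \emph{single} family of multi-tiles $\vec P=(P_1,\dots,P_{n+1})$ sharing a common spatial interval forces all frequency intervals to have the same length and loses the information carried by the larger gaps. Concretely, in the region $|\xi_1-\xi_2|\ll|\xi_2-\xi_3|\ll\cdots$ your flat model sum cannot factor, and the ``$n-1$ independent tree selections'' you propose have no common tile collection on which to operate. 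Already for $n=3$ the biest is \emph{not} analyzed via a single rank-$1$ family of tri-tiles with two simultaneous orderings; it is analyzed by nesting a fine-scale BHT inside a coarse-scale one.

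The paper's resolution is to decompose $\Delta_n$ according to the hierarchy of gap sizes, indexed by rooted trees $G$ with $n$ leaves: each $G$ records which consecutive gaps are comparable (siblings) and which are much smaller (descendants). The corresponding model operator $T^G$ is defined \emph{recursively} in the height of $G$: at the root one has a BHT-type sum over a rank-$1$ family $\vec{\P}_G$, but the inputs in positions corresponding to non-leaf sons are themselves model operators $T^{G_j}_{|I_P|}$ for the subtrees $G_j$, restricted to scales $|I_Q|\ge|I_P|$. The size/energy Proposition is then applied inductively at each node. What makes the induction close---and what is genuinely new beyond the biest---is a refined almost-orthogonality estimate (the ``delicate Bessel'' lemma) giving decay $2^{-Mk}$ in the separation parameter between the exceptional sets at adjacent tree levels; without it the geometric series over the parameters $(k_v)_{v\in V_G}$ diverges. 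Your sentence about ``higher-order generalizations of the biest Bessel inequalities'' gestures toward this, but the specific difficulty (summing over one decay parameter per vertex of an arbitrary rooted tree) and its solution are not visible in your outline.
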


The initial motivation to consider and study such multi-linear operators came from the work of Christ and Kiselev on eigenfunctions of Schr\"{o}dinger operators
\cite{ck1}, \cite{ck2}. Since their theory is quite relevant to our discussion here, we should pause and recall several aspects of it.

For every positive real number $\lambda$ consider the following eigenfunction differential equation

\begin{equation}\label{scheq}
-\Delta u(x) + V(x) u(x) = \lambda u(x)
\end{equation}
for $x$ on the real line, where $V$ is a real valued potential function. Clearly, when $V$ is identically equal to zero every solution of the equation
(\ref{scheq}) can be written as a linear combination of the fundamental solutions $u^{+}_{\lambda}(x) = e^{i\sqrt{\lambda}x}$
and $u^{-}_{\lambda}(x) = e^{-i\sqrt{\lambda}x}$ and as a consequence, it is a bounded function.
The main question addressed in the papers \cite{ck1}, \cite{ck2} was how much can one perturb the free Laplacian $-\Delta$ by a potential function $V$ and still get
bounded corresponding solutions $u_{\lambda}$ of (\ref{scheq}) for almost every $\lambda > 0$ ?

It was known (and not difficult to prove) that the case $V\in L^1(\R)$ is true and it was also known that the case $V\in L^p(\R)$ for $p>2$ is in general false, \cite{simon}.
Christ and Kiselev showed in their papers that the answer to the above question is still affirmative if one considers potential functions in the class $L^p(\R)$ for
any $1\leq p <2$. Roughly speaking (and oversimplifying a lot) the starting point of their proof was to realize that every solution $u_{\lambda}$ of the Schr\"{o}dinger
equation (\ref{scheq}) can be essentially written as a series of expressions of the type

\begin{equation}\label{minus}
\int_{x_1<...<x_n<x} V(x_1)...V(x_n) e^{i\sqrt{\lambda}(x_1-x_2+x_3-...+(-1)^n x_n)} dx_1... dx_n.
\end{equation}
Clearly, in order to prove that every such a formula is bounded (as a function of $x$) for almost every $\lambda > 0$, it is enough to prove $L^p$ bounds for the corresponding
maximal operator $\widetilde{M_n}$ defined by

\begin{equation}\label{maxminus}
\widetilde{M_n}(f_1,...,f_n)(x) = \sup_N
\left|
\int_{x_1<...<x_n<N} f_1(x_1)...f_n(x_n) e^{ix (x_1-x_2+x_3-...+(-1)^n x_n)} dx_1... dx_n.
\right|
\end{equation}
One of the main results in \cite{ck1}, \cite{ck2} says that $\widetilde{M_n}$ is indeed a bounded operator from
$L^p\times...\times L^p$ into $L^{p'/n}$ where $1/p+1/p'=1$ and $1\leq p < 2$, which means that the following inequalities hold

\begin{equation}\label{ineq1}
\|\widetilde{M_n}(f_1,...,f_n)\|_{p'/n} \leq C_{n,p}
\|f_1\|_p...\|f_n\|_p.
\end{equation}
One should also observe that in the simplest case of $L^1$ potentials, one has the trivial pointwise bound

\begin{equation}\label{trivial}
\|\widetilde{M_n}(f_1,...,f_n)\|_{\infty} \leq \frac{1}{n!}\|V\|_1.
\end{equation}
As it turned out \cite{ck1} such a small constant appears also in (\ref{ineq1}) in the place of $C_{n,p}$ in the particular case when $f_1=...=f_n = V$ and this 
essentially allowed the authors of \cite{ck1} to carefully sum up the contributions of all these expressions in (\ref{minus}) and prove the boundedness of the
eigenfunctions.

The case $p=2$ remained open and it still is today.
\footnote{This also explains our predilection for proving $L^2$ estimates only for our operators, even though one can in principle prove many other $L^p$ estimates, 
as we did in \cite{mtt-4} and \cite{mtt-5}.}
If one would like to follow the same strategy, one is naturally led to considering (after using Plancherel) the following sequence of maximal operators
(still denoted by $\widetilde{M_n}$) defined by

\begin{equation}\label{maxminus1}
\widetilde{M_n}(f_1,...,f_n)(x) = \sup_N
\left|
\int_{\xi_1<...<\xi_n<N}\widehat{f_1}(\xi_1)...\widehat{f_n}(\xi_n) e^{2\pi i x(\xi_1-\xi_2+\xi_3-...+(-1)^n\xi_n)} d\xi_1... d\xi_n
\right|
\end{equation}
and their simplified multi-linear variants $\widetilde{T_n}$ given by

\begin{equation}\label{minus1}
\widetilde{T_n}(f_1,...,f_n)(x) = 
\int_{\xi_1<...<\xi_n}\widehat{f_1}(\xi_1)...\widehat{f_n}(\xi_n) e^{2\pi i x(\xi_1-\xi_2+\xi_3-...+(-1)^n\xi_n)} d\xi_1... d\xi_n
\end{equation}
and proving at least $L^2\times... L^2\rightarrow L^{2/n,\infty}$ bounds for each of them.

This was precisely our initial attempt of understanding the $L^2$ question, but before doing anything else we first ``fixed'' their phases and replaced
the $(\widetilde{T_n})_n$ and $(\widetilde{M_n})_n$ with $(T_n)_n$ and $(M_n)_n$ respectively, defined by

\begin{equation}\label{tn}
T_n(f_1,...,f_n)(x) = 
\int_{\xi_1<...<\xi_n}\widehat{f_1}(\xi_1)...\widehat{f_n}(\xi_n) e^{2\pi i x(\xi_1+...+\xi_n)} d\xi_1... d\xi_n
\end{equation}
and 

\begin{equation}\label{mn}
M_n(f_1,...,f_n)(x) = \sup_N
\left|
\int_{\xi_1<...<\xi_n<N}\widehat{f_1}(\xi_1)...\widehat{f_n}(\xi_n) e^{2\pi i x(\xi_1+...+\xi_n)} d\xi_1... d\xi_n
\right|
\end{equation}
since these new operators looked ``more symmetric'' to us and at the time, we believed that their $L^2$ boundedness properties should be similar to the $L^2$ boundedness
properties of the original operators.

In the series of papers \cite{mtt-4}, \cite{mtt-5}, \cite{mtt-7}, \cite{mtt-8} we understood completely the cases of $T_3$ and $M_2$. However, later on when we 
returned to the study of $\widetilde{T_3}$ and $\widetilde{M_2}$ we surprisingly realized that not only they do not satisfy the necessary weak-$L^2$ estimates,
but they don't satisfy any $L^p$ estimates whatsoever \cite{mtt-6}; and the same is true for all the operators $(\widetilde{T_n})_n$ and
$(\widetilde{M_n})_n$ with the only exceptions of $\widetilde{M_1}$ (which is the Carleson operator \cite{carleson}) and $\widetilde{T_2}$ (which essentially
coincides with $H(f_1, f_2)$ where $H$ is the Hilbert transform \cite{stein}).
\footnote{We also showed in \cite{mtt-6} that in spite of all of these, the corresponding eigenfunctions are still bounded functions! After observing this, our strategy
towards proving the $L^2$ Schr\"{o}dinger conjecture changed and we eventually proved a discrete Cantor group model of it, by completely different means.}

Since at least heuristically, the corresponding counterexample for $\widetilde{T_3}$ is not difficult to explain, we will briefly describe it in what follows.

First, let us recall that if one replaces the symbol $\chi_{\xi_1<\xi_2}$ by $\sgn(\xi_1-\xi_2)$ one obtains the kernel representation of the bi-linear Hilbert transform
\cite {laceyt1} given by

\begin{equation}
T_2(f_1, f_2)(x) = \int_{\R}f_1(x-t) f_2(x+t) \frac{dt}{t}.
\end{equation}
Similarly, if one replaces the symbol $\chi_{\xi_1<\xi_2<\xi_3}$ by $\sgn(\xi_1-\xi_2)\cdot \sgn(\xi_2-\xi_3)$ in (\ref{minus1}) and (\ref{tn}) when $n=3$,
one can rewrite the modified $T_3$ and $\widetilde{T_3}$ as

\begin{equation}\label{t3+}
T_3(f_1, f_2, f_3)(x) = \int_{\R^2} f_1(x-t_1) f_2(x+t_1+t_2) f_3(x-t_2)\frac{dt_1}{t_1} \frac{dt_2}{t_2}
\end{equation}
and

\begin{equation}\label{t3-}
\widetilde{T_3}(f_1, f_2, f_3)(x) = \int_{\R^2} f_1(x-t_1) f_2(x-t_1-t_2) f_3(x-t_2)\frac{dt_1}{t_1} \frac{dt_2}{t_2}.
\end{equation}
This time, these are all harmless modifications, since the new resulted operators behave similarly. Now, if one takes $f_1(x)=f_3(x) = e^{ix^2}$ and
$f_2(x)=e^{-ix^2}$ one observes that formally,

$$
\widetilde{T_3}(f_1, f_2, f_3)(x) =
e^{ix^2}\int_{\R^2}e^{it_1 t_2}\frac{dt_1}{t_1}\frac{dt_2}{t_2}
=C e^{ix^2}\int_{\R}\frac{dt}{|t|}.
$$
In other words, we have $\widetilde{T_3}(f_1, f_2, f_3) = C f_1\cdot f_2\cdot f_3 \cdot \int_{\R}\frac{dt}{|t|}$. One can then quantify this equality by restricting the
functions $f_1, f_2, f_3$ to an interval of the form $[-N, N]$. Roughly speaking, one obtains in this way that

\begin{equation}
\widetilde{T_3}(f_1\chi_{[-N, N]}, f_2\chi_{[-N, N]}, f_3\chi_{[-N, N]}) \sim f_1 f_2 f_3 \chi_{[-N, N]} \log N
\end{equation}
and it is precisely this logarithmic factor which determines the failure of any attempt of proving $L^p$ estimates for $\widetilde{T_3}$
(see \cite{mtt-6} for details).

It is also interesting and worth mentioning the fact that if one replaces the bi-parameter kernel $\frac{1}{t_1}\frac{1}{t_2}$ with a classical
Calder\'{o}n-Zygmund kernel $K(t_1, t_2)$ of two variables \cite{stein}, the corresponding trilinear operators (\ref{t3+}) and (\ref{t3-}) behave quite similarly
and they both satisfy many $L^p$ estimates, including the $L^2\times L^2\times L^2\rightarrow L^{2/3}$ one (see \cite{mtt-1}).

Because of these counterexamples, we stopped for a while our study of $(T_n)_n$ and $(M_n)_n$ thinking that maybe their invention was a bit artificial
($(\widetilde{T_n})_n$ and $(\widetilde{M_n})_n$ were, after all, the operators which appeared ``naturally'').
However, more recently, our interest in them has been rekindled by the discovery that they really do appear in connection to a very similar but more general problem related
to the  behaviour of solutions of the so-called AKNS systems which play an important role in nuclear physics \cite{ablowitzsegur}.
We will explain all these connections in detail, later on. The desired $L^2$ boundedness properties for $T_n$ will be described in this paper, while
the corresponding theorem for $M_n$ will be postponed and presented in a future, forthcoming work.
\footnote{ We thus decided to continue our initial program and study not only the sequence $(M_n)_n$ but also the sequence $(T_n)_n$ since they are all very interesting
objects from a purely Fourier analytic point of view (after all, the simplest operator in the $(M_n)_n$ sequence is the Carleson operator \cite{carleson}
while the simplest operator in the $(T_n)_n$ sequence is the bi-linear Hilbert transform.)  }

We still don't have any news regarding the ``$L^2$- Schr\"{o}dinger conjecture'' but we have some interesting (we think) results related to the analogous
``$L^2$-AKNS conjecture'' which we now can prove in the case of upper (and lower) triangular matrices.

The article is organized as follows. In the next section we introduce the AKNS systems and describe their connection with our operators $(T_n)_n$ and $(M_n)_n$.
Then, the rest of the paper is devoted to the proof of the main Theorem \ref{mainth}. We should warn the reader already familiar with our previous $T_3$ papers
\cite{mtt-4} and \cite{mtt-5} that Theorem \ref{mainth} is not a routine generalization of our previous work, since the complexity of $T_n$ for $n\geq 4$ adds some 
fundamentally new features which did not appear in the $T_3$ case. We will unravel them as we move along.

In Section 3, we present a way of decomposing the symbol $\chi_{\xi_1<...<\xi_n}$ naturally, into finitely many slightly smoother pieces. These pieces
are intimately connected with several subregions of the simplex $\Delta_n = \xi_1<...<\xi_n$ which will be described with the help of certain combinatorial
$rooted$ $trees$. As a consequence, our operator $T_n$ will be decomposed as

\begin{equation}\label{dec}
T_n = \sum_G T_n^G
\end{equation}
where the sum in (\ref{dec}) runs over a certain subclass of rooted trees having precisely $n$ leaves. Then, in Section 4 we show how to discretize all 
these operators $T_n^G$ and also show that in order to prove our main theorem it is enough to prove it in the case of these discretized model operators.

Section 5 contains the main part of the actual proof of the theorem and the paper ends with Section 6, in which we prove the ``delicate Bessel'' Lemma
which plays an important role in the argument.

{\bf Acknowledgements:} The first author has  been partially supported by the NSF Grant DMS 0653519 and by an Alfred P. Sloan Research Fellowship.
The second author has been partially supported the NSF Grant DMS 0701302. The third author has been partially supported by the NSF Grant
CCF   0649473 and by a McArthur Fellowship.

\section{AKNS systems and Fourier analysis}

Let $\lambda\in\R$, $\lambda\neq 0$ and consider the system of differential equations

\begin{equation}\label{akns}
u' = i \lambda D u + N u
\end{equation}
where $u = [u_1,...,u_n]^t$ is a vector valued function defined on the real line,
$D$ is a diagonal $n\times n$ constant matrix with real and distinct entries $d_1,...,d_n$
and $N = (a_{ij})_{i,j=1}^n$ is a matrix valued function defined also on the real line and
having the property that $a_{ii}\equiv 0$ for every $i=1,...,n$. These systems play a 
fundamental role in nuclear physics and they are called AKNS systems \cite{ablowitzsegur}.  The particular case
$n=2$ is also known to be deeply connected to the classical theory of Schr\"{o}dinger
operators \cite{ck1}, \cite{ck2}.

If $N\equiv 0$ it is easy to see that our system (\ref{akns}) becomes a union of independent
single equations

$$u'_k = i\lambda d_k u_k$$
for $k=1,...,n$ whose solutions are

$$u_k^{\lambda}(x) = C_{k,\lambda} e^{ i\lambda d_k x}$$
and they are all $L^{\infty}(\R)$-functions. 

As in the case of the Schr\"{o}dinger equation mentioned in the introduction, it is natural to ask how much can one perturb the $N\equiv 0$ case and still obtain 
bounded solutions $(u_k^{\lambda})_{k=1}^n$ for almost every real $\lambda$. As before, the answer is affirmative and easy for $L^1$ entries, very likely to hold true
for $L^p$ entries when $1\leq p <2$ (we have not checked this carefully but we believe that the arguments of \cite{ck1}, \cite{ck2} should be able to be adapted in this 
setting also) and is false for $L^p$ entries if $p>2$ \cite{simon}.

Thus, one is left with the following

\begin{question}
Is it true that as long as the entries of the potential matrix $N$ are $L^2(\R)$ functions, the corresponding solutions $(u_k^{\lambda})_{k=1}^n$
of the AKNS system (\ref{akns}) are all bounded functions for almost every real number $\lambda$ ?
\end{question}

When $N\nequiv 0$ one can use a simple variation of constants argument and write $u_k(x)$
as

$$u_k(x) := e^{ i\lambda d_k x} v_k(x)$$
for $k=1,...,n$. As a consequence, the column vector $v=[v_1,...,v_n]^t$ becomes the solution
of the following system

\begin{equation}\label{111}
v' = W v
\end{equation}
where the entries of $W$ are given by $w_{lm}(x):= a_{lm}(x)e^{ i\lambda (d_l-d_m) x}$.
It is therefore enough to prove that the solutions of (\ref{111}) are bounded as long as the
entries $a_{lm}$ are square integrable.

To get a feeling of the difficulties of the problem, let us first consider the easiest possible
case, that of $2\times 2$ upper triangular matrices. This means that $n=2$ and 
$a_{11}=a_{22}= a_{21}\equiv 0$ while $a_{12}(x):= f(x)$ is an arbitrary $L^2(\R)$ function.
The system (\ref{111}) then becomes

\begin{equation}
\left[
\begin{array}{cc}
v'_1\\
v'_2
\end{array}\right] =
\left[
\begin{array}{cccc}
0 & f(x) e^{i\lambda (d_1-d_2)x}\\
0 & 0
\end{array}
\right]
\left[
\begin{array}{cc}
v_1\\
v_2
\end{array}\right]
\end{equation}
which implies that

\begin{eqnarray*}
v'_1 & = & v_2(x) f(x) e^{i\lambda (d_1-d_2)x}\\
v'_2 & = & 0.
\end{eqnarray*}
Clearly, $v_2$ is bounded since it is constant (which we call $C_{\lambda}$), while $v_1(=v_1^{\lambda})$ can be written as

$$v_1^{\lambda}(x) = C_{\lambda} \int_{-\infty}^x f(y) e^{i\lambda (d_1-d_2)y} dy
+ \widetilde{C}_{\lambda}$$
for some other constant $\widetilde{C}_{\lambda}$. In particular, we have

\begin{equation}\label{122}
\|v_1^{\lambda}\|_{\infty}\leq |C_{\lambda}|\sup_x
\left|\int_{-\infty}^x f(y) e^{i\lambda (d_1-d_2)y} dy
\right| + |\widetilde{C}_{\lambda} |.
\end{equation}
We now recall the Carleson operator $\C$ defined by

$$\C f(x):= \sup_N \left|
\int_{\xi < N}
\widehat{f}(\xi) e^{2\pi i x\xi} d\xi
\right|.
$$
A celebrated theorem of Carleson \cite{carleson} says that $\C$ maps $L^2(\R)$ into $L^2(\R)$
boundedly and in particular this means that $\C f(x) <\infty$ for almost every $x\in\R$, as long
as $f$ is an $L^2(\R)$-function. Using this fact and Plancherel we see from (\ref{122}) that
indeed $\|v_1^{\lambda}\|_{\infty}$ is finite for almost every $\lambda$ which means that the 
conjecture is true in this particular case.

Let us similarly consider now the case of $3\times 3$ upper triangular systems. So this time
$n=3$ and $a_{12}(x):= f_1(x)$, $a_{13}(x):= f_2(x)$, $a_{23}(x):= f_3(x)$ and all the other
entries are identically equal to zero. Our system (\ref{111}) becomes

\begin{equation}
\left[
\begin{array}{cc}
v'_1\\
v'_2\\
v'_3
\end{array}\right] =
\left[
\begin{array}{cccc}
0 & f_1(x) e^{i\lambda (d_1-d_2)x} & f_2(x) e^{i\lambda (d_1-d_3)x}\\
0 & 0 & f_3(x) e^{i\lambda (d_2-d_3)x}\\
0 & 0 & 0
\end{array}
\right]
\left[
\begin{array}{cc}
v_1\\
v_2\\
v_3
\end{array}\right]
\end{equation}
which implies that

\begin{eqnarray*}
v'_1 & = & v_2(x) f_1(x) e^{i\lambda (d_1-d_2)x} + v_3(x) f_2(x) e^{i\lambda (d_1-d_3)x}    \\
v'_2 & = & v_3(x) f_3(x) e^{i\lambda (d_2-d_3)x}\\
v'_3 & = & 0.
\end{eqnarray*}
Clearly, $v_3$ is bounded since it is constant (say $C_{\lambda}$) and exactly as before
$v_2 (= v_2^{\lambda})$ is also bounded for almost every $\lambda$, as a consequence
of the same theorem of Carleson. Since

$$v_2(x) = C_{\lambda}\int_{-\infty}^x f_3(y) e^{i\lambda (d_2-d_3)y} dy
+ \widetilde{C}_{\lambda}$$
it follows that

$$v'_1(x)= C_{\lambda}
\left(\int_{-\infty}^x f_3(y) e^{i\lambda (d_2-d_3)y} dy\right)
f_1(x) e^{i\lambda (d_1-d_2)x} +$$

$$\widetilde{C}_{\lambda}f_1(x) e^{i\lambda (d_1-d_2)x} +
C_{\lambda}f_2(x) e^{i\lambda (d_1-d_3)x}.$$
By taking one more antiderivative, $v_1( = v_1^{\lambda})$ becomes

$$v_1^{\lambda}(x) =
C_{\lambda}
\int_{-\infty}^x f_1(y) e^{i\lambda (d_1-d_2)y}
\left(
\int_{-\infty}^y f_3(z) e^{i\lambda (d_2-d_3)z} dz\right) dy +$$

$$\widetilde{C}_{\lambda}
\int_{-\infty}^x f_1(y) e^{i\lambda (d_1-d_2)y} dy + 
C_{\lambda}
\int_{-\infty}^x f_2(y) e^{i\lambda (d_1-d_3)y} dy + \widetilde{\widetilde{C_{\lambda}}}:= I + II + III + \widetilde{\widetilde{C_{\lambda}}}.
$$
The terms $II$ and $III$ are bounded for almost every $\lambda$ as before, while the first one
can be rewritten as

\begin{equation}\label{133}
C_{\lambda}
\int_{z < y < x}
f_3(z) f_1(y)
e^{i \lambda [(d_2-d_3) z + (d_1-d_2) y]} dz dy.
\end{equation}
Let us now recall the bi-Carleson operator operator $M_2^{\alpha}$ introduced in \cite{mtt-8} and defined by
\footnote{$\alpha=(\alpha_1,\alpha_2)$ is a fixed vector in $\R^2$ so that 
$\alpha_1\neq 0$ and $\alpha_2\neq 0$}

$$M_2^{\alpha}(f, g)(x) =
\sup_N\left|
\int_{\xi_1 < \xi_2 < N}
\widehat{f}(\xi_1)
\widehat{g}(\xi_2)
e^{2\pi i x(\alpha_1\xi_1 + \alpha_2\xi_2)} d\xi_1 d\xi_2
\right|.
$$
A recent theorem in \cite{mtt-8} says that if $\alpha_1 + \alpha_2 \neq 0$ then
$M_2^{\alpha}$ maps $L^2(\R)\times L^2(\R)$ into $L^1(\R)$ and as a consequence 
\footnote{It is also known that if $\alpha_1+\alpha_2=0$ the $M_2^{\alpha}$ does not satisfy any $L^p$ estimates \cite{mtt-6}.   }
this means
that $M_2^{\alpha}(f, g)(x) < \infty$ for almost every $x\in\R$, as long as $f$ and $g$
are $L^2(\R)$ functions. Using this fact and Plancherel again, we see from (\ref{133}) (since $d_2-d_3 + d_1-d_2 = d_1-d_3 \neq 0$)
that $v_1^{\lambda}$ is also bounded for almost every $\lambda$, which means that the
conjecture is also true for upper triangular $3\times 3$ potential matrices $N$.

The case of general upper triangular $n\times n$ matrices for $n\geq 2$ is similar and can 
be reduced to proving $L^2(\R)$ estimates for maximal operators of the form

$$M_k^{\alpha}(f_1,...f_k)(x):=
\sup_N \left|
\int_{\xi_1 <...<\xi_k < N}
\widehat{f_1}(\xi_1)...
\widehat{f_2}(\xi_2)
e^{2\pi i x(\alpha_1 \xi_1 +...+ \alpha_k \xi_k)} d\xi_1...d\xi_k
\right|,$$
where $\alpha_1,...,\alpha_k$ satisfy the nondegeneracy condition

\begin{equation}\label{nondeg}
\sum_{j=j_1}^{j_2} \alpha_j \neq 0
\end{equation}
for every $1\leq j_1<j_2\leq k$. It will be clear from the method of proof that our Theorem \ref{mainth} holds not only for the operators $T_n$ but also for the
operators $T_k^{\alpha}$ defined by

\begin{equation}
T_k^{\alpha}(f_1,...f_k)(x):=
\int_{\xi_1 <...<\xi_k}
\widehat{f_1}(\xi_1)...
\widehat{f_2}(\xi_2)
e^{2\pi i x(\alpha_1 \xi_1 +...+ \alpha_k \xi_k)} d\xi_1...d\xi_k,
\end{equation}
as long as $\alpha_1,...,\alpha_k$ satisfy the nondegeneracy condition (\ref{nondeg}).
\footnote{It is also interesting to remark that in the general $n=2$ case, a standard iterative procedure of Picard type will produce multi-linear expansions where the 
phases are of the form $(d_1-d_2)\xi_1 + (d_2-d_1)\xi_2+...$ as in the Schr\"{o}dinger case. Since one has $d_1-d_2 + d_2 - d_1 =0$, all the corresponding
maximal operators are unbounded.}

\section{Rooted trees and the decomposition of the symbol $\chi_{\xi_1<...<\xi_n}$}

The goal of this section is to carefully decompose the symbol $\chi_{\xi_1<...<\xi_n}$ as a finite sum of several well localized multipliers associated 
with various subregions of the simplex $\Delta_n = \xi_1<...<\xi_n$, which will be best described by using the combinatorial language of $rooted$ $trees$.

To motivate this decomposition procedure we shall briefly revisit the already understood cases of $\Delta_2$ and $\Delta_3$.

\underline{The $\Delta_2$ case.}

This case corresponds to the bi-linear Hilbert transform \cite{laceyt1}. Let us first recall that by a $shifted$ $dyadic$ $interval$ we simply mean
any interval of the form $2^j(k+(0,1) + (-1)^j\alpha)$ for any $k,j\in\Z$ and $\alpha\in \{0, \frac{1}{3}, \frac{2}{3}\}$. Then, for any integer $d$ greated or equal than $1$
a $shifted$ $dyadic$ $quasi-cube$ $of$ $dimension$ $d$ is defined to be any $d$ - dimensional set of the form $Q = Q_1\times...\times Q_d$ having the property that
$|Q_1|\sim...\sim |Q_d|$ where each $Q_i$ is a shifted dyadic interval. Observe as in \cite{mtt-5} that for any arbitrary cube $\widetilde{Q}\subseteq \R^d$
there always exists a shifted dyadic quasi-cube $Q$ so that $\widetilde{Q}\subseteq \frac{7}{10} Q$ and satisfying $l(\widetilde{Q})\sim l(Q)$.
\footnote{$\frac{7}{10}Q$ is the parallelepiped having the same center as $Q$ but $\frac{7}{10}$ times smaller than it. The center of the quasi-cube is defined to be
the $d$ - dimensional point whose $j$th coordinate is the midpoint of the interval $Q_j$. By $l(Q)$ we simply denote the length of the first interval
$|Q_1|$ since they are all of comparable size. Finally, we will write $A\lesssim B$ whenever $A\leq C B$ for some fixed constant $C>0$ and also
$A\sim B$ whenever $A\lesssim B$ and $B\lesssim A$.}
Let us then denote by $\Gamma$ the singularity set

$$\Gamma = \{ (\xi_1, \xi_2)\in \R^2 : \xi_1=\xi_2 \}$$
and consider the collection $\Q$ of all shifted dyadic quasi-cubes $Q$ of dimension $2$ having the property that
$Q\subseteq \Delta_2$ and also satisfying
\footnote{ Here the understanding is that there exists a fixed large constant $C > 0$ so that $C\diam(Q) \leq \dist(Q, \Gamma) \leq 100 C \diam(Q)$. }

\begin{equation}\label{1}
\dist (Q, \Gamma) \sim \diam (Q).
\end{equation}
Since the set of parallelepipeds $\{ \frac{7}{10}Q : Q\in \Q \}$ forms a finitely overlaping cover of $\Delta_2$, by a standard partition of unity we can write the symbol
$\chi_{\xi_1<\xi_2}$ as

\begin{equation}\label {2}
\chi_{\xi_1<\xi_2} = \sum_Q \Phi_Q (\xi_1, \xi_2)
\end{equation}
where $\Phi_Q$ is a bump function adapted to $\frac{8}{10} Q$. By splitting further each $\Phi_Q$ as a double Fourier series in $\xi_1, \xi_2$ we can rewrite
the above expression as

\begin{equation}\label{3}
\sum_{\n\in\Z^2} C_{\n} \sum_Q \Phi_{Q_1, \n, 1}(\xi_1)\Phi_{Q_2, \n, 2}(\xi_2)
\end{equation}
where $(C_{\n})_{\n}$ is a rapidly decreasing sequence and $\Phi_{Q_j, \n, j}$ is a bump function adapted to $\frac{9}{10}Q_j$ uniformly in $\n$ for $j=1,2$, 
(see also \cite{mtt-5}). 

Since $\xi_1\in\frac{9}{10}Q_1$ and $\xi_2\in \frac{9}{10}Q_2$ it follows that $\xi_1 +\xi_2 \in \frac{9}{10}Q_1 + \frac{9}{10}Q_2$ and as a consequence,
one can find a shifted dyadic interval $Q_3$ with the property $\frac{9}{10}Q_1 + \frac{9}{10}Q_2 \subseteq \frac{7}{10} Q_3$ and also satisfying
$|Q_1|\sim |Q_2| \sim |Q_3|$. In particular, there exists a bump function $\Phi_{Q_3, \n, 3}$ adapted to $\frac{9}{10}Q_3$ uniformly in $\n$ such that
$\Phi_{Q_3, \n, 3}\equiv 1$ on $\frac{9}{10}Q_1 + \frac{9}{10}Q_2$. This means that the expression (\ref{3}) can also be written as

\begin{equation}\label{4}
\sum_{\n\in\Z^2} C_{\n} \sum_Q \Phi_{Q_1, \n, 1}(\xi_1)\Phi_{Q_2, \n, 2}(\xi_2)\Phi_{Q_3, \n, 3}(\xi_1+\xi_2),
\end{equation}
where this time $Q$ runs over the corresponding set of shifted dyadic quasi-cubes of dimension $3$.
Generic multipliers of the type $m(\xi_1, \xi_2)= \Phi_1(\xi_1)\Phi_2(\xi_2)\Phi_3(\xi_1+\xi_2)$ are well localized and they allow one to decompose the 
corresponding bi-linear operator $T_m$ nicely \footnote{ In general, if $m(\xi_1, ... , \xi_k)$ is a multiplier, by $T_m$ we denote the $k$-linear operator defined by
$T_m (f_1, ..., f_k)(x) : = \int_{\R^k} m(\xi_1, ... , \xi_k) \widehat{f_1}(\xi_1) ... \widehat{f_k}(\xi_k) e^{2\pi i x( \xi_1+ ... + \xi_k)} d \xi_1 ... d \xi_k$.   }, 
as one can see from the following sequence of equalities.

$$\int_{\R} T_m(f_1, f_2)(x) f_3(x) dx =$$

$$\int_{\R} \left(\int_{\R^2}
\widehat{f_1}(\xi_1)
\widehat{f_2}(\xi_2)
\Phi_1(\xi_1)
\Phi_2(\xi_2)
\Phi_3(\xi_1+\xi_2)
e^{2\pi i x(\xi_1+\xi_2)} 
d\xi_1
d\xi_2\right)
f_3(x) dx =
$$

$$\int_{\R^2}
\widehat{f_1}(\xi_1)
\widehat{f_2}(\xi_2)
\Phi_1(\xi_1)
\Phi_2(\xi_2)
\Phi_3(\xi_1+\xi_2)
\widehat{f_3}(-\xi_1-\xi_2) d\xi_1 d\xi_2 : =
$$

$$\int_{\R^2}
\widehat{f_1}(\xi_1)
\Phi_1(\xi_1)
\widehat{f_2}(\xi_2)
\Phi_2(\xi_2)
\widehat{f_3}(-\xi_1-\xi_2)
\widetilde{\Phi_3}(-\xi_1-\xi_2)
d\xi_1 d\xi_2 = $$

$$
\int_{\lambda_1 + \lambda_2 + \lambda_3 = 0}
(\widehat{f_1\ast \check{\Phi}_1})(\lambda_1)
(\widehat{f_2\ast \check{\Phi}_2})(\lambda_2)
(\widehat{f_3\ast\check{\widetilde{\Phi_3}}})(\lambda_3) d \lambda=
$$

\begin{equation}\label{5}
\int_{\R}
(f_1\ast\check{\Phi}_1)(x)
(f_2\ast\check{\Phi}_2)(x)
(f_3\ast\check{\widetilde{\Phi_3}})(x).
\end{equation}
This also implies that

$$T_m(f_1, f_2)(x) =
[(f_1\ast\check{\Phi}_1)
(f_2\ast\check{\Phi}_2)]\ast
\check{\widetilde{\Phi_3}}.
$$
If one discretizes further the expression (\ref{5}) in the $x$ - variable for each of the similar multipliers appearing in (\ref{4}), one obtains the usual model for the 
bi-linear Hilbert transform \cite{laceyt1}. For reasons that will become clearer later on, we would like to associate to this simplex $\Delta_2$ the simplest
rooted tree having precisely two leaves, as in Figure \ref{fig1}.

\begin{figure}[htbp]\centering
\psfig{figure=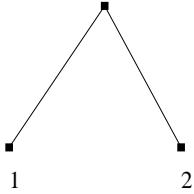, height=1in, width=1in}
\caption{ The rooted tree of the bi-linear Hilbert transform.}
\label{fig1}
\end{figure}

\underline{The $\Delta_3$ case.}

This case corresponds to the ``bi-est'' operator \cite{mtt-4}, \cite{mtt-5}. If $\xi_1<\xi_2<\xi_3$ then clearly there are three possibilities:
either $|\xi_1-\xi_2|<< |\xi_2-\xi_3|$ or $|\xi_2-\xi_3| << |\xi_1-\xi_2|$ or $|\xi_1-\xi_2| \sim |\xi_2-\xi_3|$ each defining different types of regions inside the simplex
$\Delta_3$.
\footnote{$A<<B$ denotes the statement that there exists a large fixed constant $C>0$ so that $CA\leq B$.}

The idea of \cite{mtt-5} was to split the symbol $\chi_{\xi_1<\xi_2<\xi_3}$ as a sum of three distinct symbols

$$\chi_{\xi_1<\xi_2<\xi_3} = m_I + m_{II} + m_{III}$$
well adapted to these regions described above.

To define $m_I$ properly, let us first observe that as before, the symbol $\chi_{\frac{\xi}{2}<\eta}$ can be also decomposed as

\begin{equation}\label{6}
\chi_{\frac{\xi}{2}<\eta} = 
\sum_{\n'\in\Z^2} C'_{\n'} \sum_{Q'} \Phi_{Q'_1, \n', 1}(\xi)\Phi_{Q'_2, \n', 2}(\eta)
\end{equation}
where this time $Q'$ are shifted dyadic quasi-cubes of dimension $2$ having the property that $Q'\subseteq \{ (\xi,\eta)\in\R^2 : \frac{\xi}{2} < \eta \}$
and also that

$$\dist (Q', \Gamma') \sim \diam (Q')$$
where $\Gamma'$ denotes the singularity line

$$\Gamma' = \{ (\xi,\eta)\in\R^2 : \frac{\xi}{2} = \eta \}.
$$
In particular, if $(\xi_1, \xi_2, \xi_3)$ is a fixed point in the simplex $\Delta_3$ which belongs to the first region $|\xi_1-\xi_2|<<|\xi_2-\xi_3|$, one
clearly has not only $\xi_2 <\xi_3$ but also $\frac{\xi_1+\xi_2}{2} < \xi_3$ and this implies that (using (\ref{3}) and (\ref{6}))

$$1 = \chi_{\xi_1<\xi_2}\cdot \chi_{\frac{\xi_1+\xi_2}{2}<\xi_3}(\xi_1, \xi_2, \xi_3) = $$

\begin{equation}\label{7}
\sum_{\n, \n', Q, Q'}
C_{\n} C'_{\n'}
\Phi_{Q_1, \n, 1}(\xi_1)\Phi_{Q_2, \n, 2}(\xi_2)
\Phi_{Q'_1, \n', 1}(\xi_1+\xi_2)\Phi_{Q'_2, \n', 2}(\xi_3)
\end{equation}
It is also not difficult to see that the last expression (\ref{7}) is also equal to

$$
\sum_{\n, \n', Q, Q' : l(Q') >> l(Q)}
C_{\n} C'_{\n'}
\Phi_{Q_1, \n, 1}(\xi_1)\Phi_{Q_2, \n, 2}(\xi_2)
\Phi_{Q'_1, \n', 1}(\xi_1+\xi_2)\Phi_{Q'_2, \n', 2}(\xi_3)
$$ with the implicit constants independent on the fixed point $(\xi_1, \xi_2, \xi_3)$ and dependent only on the corresponding constants defining the first region.

Then, one defines the symbol $m_I$ simply by

\begin{equation}\label{mI}
m_I(\xi_1, \xi_2, \xi_3):= 
\sum_{\n, \n', Q, Q' : l(Q') >> l(Q)}
C_{\n} C_{\n'}
\Phi_{Q_1, \n, 1}(\xi_1)\Phi_{Q_2, \n, 2}(\xi_2)
\Phi_{Q'_1, \n', 1}(\xi_1+\xi_2)\Phi_{Q'_2, \n', 2}(\xi_3)
\end{equation}
for any $(\xi_1, \xi_2, \xi_3)\in\R^3$. Clearly, by construction, $m_I$ is identically equal to $1$ on the first region and is also supported on another larger region
of the same type (defined by different constants) which is contained inside the simplex $\Delta_3$. Similarly, one defines the symbol $m_{II}$ adapted to the second region
$|\xi_2-\xi_3|<< |\xi_1 - \xi_2|$ and in the end one sets

$$m_{III}:= \chi_{\Delta_3} - m_I - m_{II}$$
which clearly is supported inside a region of the third type $|\xi_1-\xi_2| \sim |\xi_2-\xi_3|$.

As before (when we passed to (\ref{4}) from (\ref{3})), one can ``complete'' the expressions in (\ref{mI}) obtaining products of type

\begin{equation}\label{8}
m(\xi_1, \xi_2, \xi_3) =
\Phi_1(\xi_1)
\Phi_2(\xi_2)
\Phi_3(\xi_1+\xi_2)
\Phi'_1(\xi_1+\xi_2) 
\Phi'_2(\xi_3)
\Phi'_3(\xi_1 +\xi_2 +\xi_3).
\end{equation}
The reason for which we prefered to describe the region $|\xi_1-\xi_2|<<|\xi_2 - \xi_3|$ as rather being $|\xi_1-\xi_2|<<|\frac{\xi_1+\xi_2}{2}-\xi_3|$
will be clearer when we calculate the $4$ - linear form associated to the tri-linear operator $T_m$ given by symbols of type (\ref{8}). This time we can write

$$\int_{\R}T_m(f_1, f_2, f_3)(x) f_4(x) dx =$$

$$\int_{\R^3}
\widehat{f_1}(\xi_1)
\widehat{f_2}(\xi_2)
\widehat{f_3}(\xi_3)
\Phi_1(\xi_1)
\Phi_2(\xi_2)
\Phi_3(\xi_1+\xi_2)
\Phi'_1(\xi_1+\xi_2) 
\Phi'_2(\xi_3)
\Phi'_3(\xi_1 +\xi_2 +\xi_3)
\widehat{f_4}(-\xi_1-\xi_2-\xi_3) d\xi :=
$$

$$\int_{\R^3}
\widehat{f_1}(\xi_1)
\widehat{f_2}(\xi_2)
\widehat{f_3}(\xi_3)
\Phi_1(\xi_1)
\Phi_2(\xi_2)
\Phi_3(\xi_1+\xi_2)
\Phi'_1(\xi_1+\xi_2) 
\Phi'_2(\xi_3)
\widetilde{\Phi'_3}(-\xi_1 -\xi_2 -\xi_3)
\widehat{f_4}(-\xi_1-\xi_2-\xi_3) d\xi =
$$

$$\int_{\lambda_1 +\lambda_2 +\lambda_3=0}
\left(
\int_{\xi_1+\xi_2=\lambda_1}
\widehat{f_1}(\xi_1)
\widehat{f_2}(\xi_2)
\Phi_1(\xi_1)
\Phi_2(\xi_2)
d\xi_1 d\xi_2
\right)
\Phi_3(\lambda_1)
\Phi'_1(\lambda_1)\cdot
$$

$$
\Phi'_2(\lambda_2)
\widehat{f_3}(\lambda_2)
\widetilde{\Phi'_3}(\lambda_3)
\widehat{f_4}(\lambda_3) d\lambda =
$$

$$
\int_{\lambda_1 +\lambda_2 +\lambda_3=0}
\left(
\widehat{\left[ [(f_1\ast\check{\Phi}_1)(f_2\ast\check{\Phi}_2)]\ast\check{\Phi}_3\right] \ast\check{\Phi'}_1    }
\right)(\lambda_1)
(\widehat{f_3\ast\check{\Phi'}_2})(\lambda_2)
(\widehat{f_4\ast\check{\widetilde{\Phi'_3}}})(\lambda_3) d\lambda =
$$

$$\int_{\R}
\left(
\left[ [(f_1\ast\check{\Phi}_1)(f_2\ast\check{\Phi}_2)]\ast\check{\Phi}_3\right] \ast\check{\Phi'}_1
\right)(x)
( f_3\ast\check{\Phi'}_2   )(x)
(  f_4\ast\check{\widetilde{\Phi'_3}}   )(x) dx.
$$

As before, if one discretizes further this expression in the $x$ - variable for each of the similar multipliers appearing in (\ref{mI}),
one obtains the discretized model for the ``bi-est'' operator in \cite{mtt-5}. To each of the three regions described above, we associate a rooted tree as in the 
Figure \ref{fig2}.

\begin{figure}[htbp]\centering
\psfig{figure=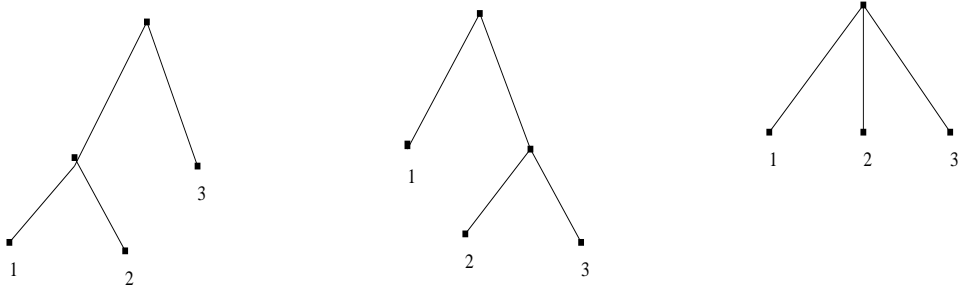, height=1.5in, width=5in}
\caption{The rooted trees of the ``bi-est''.}
\label{fig2}
\end{figure}

\underline{The $\Delta_n$ case.}

We denote generically by $G$ an arbitrary rooted tree. Let us recall that each vertex of $G$
has a $level$. There is precisely one vertex, $the$ $root$, which has level $0$.
All adjacent vertices differ by exactly one level and each vertex at level $i+1$ is adjacent
to exactly one vertex at level $i$. All the vertices adjacent to a fixed vertex $u$ and at
levels below $u$ are called $the$ $sons$ of $u$. Also, all the vertices that are joined to $u$
by a chain of vertices at levels below $u$, are called $descendants$ of $u$. The vertices which
do not have sons are called $leaves$.

We will consider only those rooted trees having precisely $n$ leaves which we lable with
numbers from $1$ to $n$ from the most left one to the most right one. Also, our trees have the
property that every vertex of the tree which is not a leave has at least two sons.
We denote the class of all such rooted trees by $\G_n$.

The maximal possible level in a tree is called $the$ $height$ of the tree. We will alo denote by
$V_G$ the set of all vertices of $G$ which are not leaves.

If $u\in V_G$, we denote by $\I_u$ the collection of all the integers $1\leq i\leq n$
having the property that the leave labeled ``$i$'' is a descendant of $u$. It is not difficult
to see that there exist two integers $1\leq l_u < r_u\leq n$ so that

$$\I_u = \{l_u, l_u+1,...,r_u\}.$$ In case $u$ is a leave labled $i_0$, we simply
set $\I_u:= \{i_0\}$.
Now, if $\xi_1<...<\xi_n$, we denote by $I_1, I_2,...I_{n-1}$ the intervals
$[\xi_1,\xi_2]$, $[\xi_2,\xi_3]$,..., $[\xi_{n-1},\xi_n]$ respectively.

Fix now $G\in \G_n$ and $u\in V_G$. Denote also by $u_1, u_2,...,u_{\#}$ all the sons of $u$.
To this vertex $u$ we associate a region $R_u\subseteq \Delta_n$ defined to be the set
all all vectors $(\xi_1,...,\xi_n)\in\Delta_n$ having the property that

\begin{equation}\label{constraints}
|I_{r_{u_1}}|\sim |I_{r_{u_2}}|\sim...\sim |I_{r_{u_{\#-1}}}| >> |I_l|
\end{equation}
for every $l_u\leq l\leq r_u - 1$ and $l\neq r_{u_1}, r_{u_2},...,r_{u_{\#-1}}$.
\footnote{In other words, the region $R_u$ is the subregion of $\Delta_n$ defined by the constraints in (\ref{constraints}). Note that sometimes it may happen that
some (or all !) of the constraints ``do not make sense'', in which case they should be simply disregarded. For instance, if all the sons of $u$ are leaves 
(as in Figure \ref{fig4}), then there is no $l$ with the property  $l_u\leq l\leq r_u - 1$ and $l\neq r_{u_1}, r_{u_2},...,r_{u_{\#-1}}$. In this case, the constraints
defining the region are only the first ones, namely 
$|I_{r_{u_1}}|\sim |I_{r_{u_2}}|\sim...\sim |I_{r_{u_{\#-1}}}|$. If in addition $u$ has only two sons 
(and they are both leaves as before), then one can see that the sequence of inequalities 
$|I_{r_{u_1}}|\sim |I_{r_{u_2}}|\sim...\sim |I_{r_{u_{\#-1}}}|$   becomes redundant
since $\#-1 = 1$. In this case, there are simply no constraints and as a consequence the corresponding region $R_u$ coincides with the whole simplex $\Delta_n$ }

Then, we define the region $R_G\subseteq \Delta_n$ by

\begin{equation}\label{RG}
R_G:= \bigcap_{u\in V_G} R_u.
\end{equation}
By a region $R( = R_G)$ of $type$ $G$ we simply mean from now on a region defined in this way
for various implicit constants.

For instance, if $G$ is the rooted tree on the left in Figure \ref{fig3} then by a region of 
type $G$
we mean one defined by the inequalities 
$(|I_3|\sim |I_4| >> |I_1|, |I_2|, |I_5|) \cap (|I_1| >> |I_2|)$ and if $G$ is the tree
on the right in Figure \ref{fig3}, then by a region of type $G$ we mean one given by the 
inequalities $(|I_4| >> |I_1|, |I_2|, |I_3|, |I_5|) \cap (|I_2| >> |I_1|, |I_3|) $.

\begin{figure}[htbp]\centering
\psfig{figure=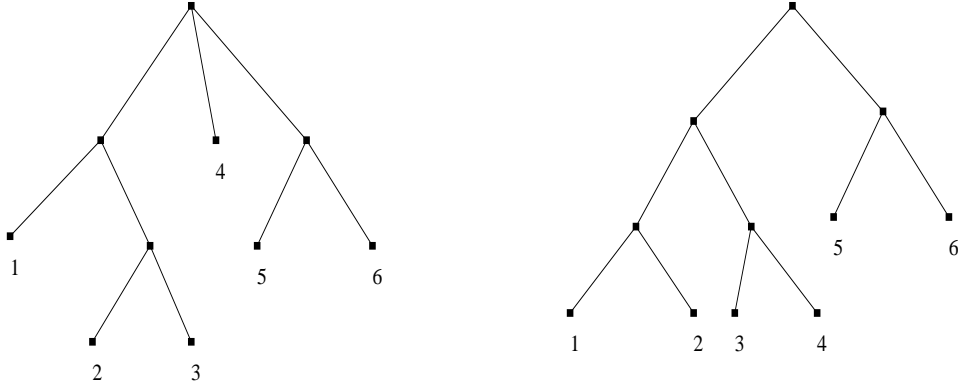, height=2in, width=5in}
\caption{Two rooted trees of height three.}
\label{fig3}
\end{figure}

It is now easy to see that if the implicit constants are chosen carefully, one can decompose
the simplex $\Delta_n$ as

$$\Delta_n = \bigcup_{G\in\G_n} R_G.$$
We would now like to associate to each $G\in \G_n$ a multiplier $m_G$ adapted to such a region
$R_G$ constructed before.
 We first need the following definition.

\begin{definition}\label{Ra}
Let $d\geq 2$ and $\vec{a} = (a_1,...,a_d)$ be a vector with real and strictly positive entries.
Denote by $R_{\vec{a}}$ the region of all vectors $(x_1,...,x_d)\in \R^d$ satisfying

$$a_1x_1 < a_2x_2 <...< a_dx_d$$
and also 

$$|a_1x_1 -a_2x_2| \sim |a_2x_2-a_3x_3| \sim ... \sim |a_{d-1}x_{d-1} - a_d x_d|.$$
A shifted dyadic quasi-cube $Q = (Q_1, Q_2,..., Q_d)$ of dimension $d$ is said to be adapted
to the region $R_{\vec{a}}$ if and only if for every $j=1,...,d-1$ the quasi-cubes 
$Q^j:= (Q_j, Q_{j+1})$ have the property that 

$$ Q^j\subseteq \{(x_j, x_{j+1})\in \R^2 : a_jx_j < a_{j+1}x_{j+1} \}$$
and also that

$$\dist(Q^j, \Gamma^j) \sim \diam (Q^j)$$
where

$$\Gamma^j := \{(x_j, x_{j+1})\in\R^2 : a_jx_j = a_{j+1}x_{j+1}\}.$$ 

\end{definition}
We are now ready to define a standard symbol associated to $G$ as follows. First, for
$u\in V_G$ denote by $u_1,...,u_{\#}$ all its sons and consider the region
$R_{\vec{a}}$ associated to the vector

$$\vec{a} := (\frac{1}{|\I_{u_1}|},...,\frac{1}{|\I_{u_{\#}}|})$$
as in Definition \ref{Ra}.

We denote by $m_u$ any expression of the form

\begin{equation}\label{9}
m_u((\xi_l)_{l\in\I_u}) =
\sum_{Q_u}
\Phi_{Q_u^1, 1}(\sum_{l\in\I_{u_1}} \xi_l)
...
\Phi_{Q_u^{\#}, \#}(\sum_{l\in\I_{u_{\#}}} \xi_l)
\end{equation}
where the sum in taken over shifted dyadic quasi-cubes adapted to the region $R_{\vec{a}}$
above in the sense of Definition \ref{Ra}, and $\Phi_{Q_u^j, j}$ are bumps adapted to
$\frac{9}{10}Q_u^j$ for any $j=1,...,\#$.

The expression (\ref{9}) can also be written as 

$$m_u:= \sum_{k\in\Z} m_u^k
$$
where $m_u^k$ is defined by the same formula (\ref{9}) but with the aditional constraint that
$l(Q_u)\sim 2^k$.

In the end, a multiplier $m_G$ corresponding to the tree $G$ is defined to be
any expression of the form

\begin{equation}\label{mG}
m_G :=
\sum_{\n} C_{\n}
\sum_{(k_1, k_2,...,k_{|V_G|})\in S_G}
\left(
\prod_{l=1}^{|V_G|} m^{k_l}_{v_l, \n}
\right)
\end{equation}
where we assumed that $V_G = \{v_1,...,v_{|V_G|}\}$, the sequence $(C_{\n})_{\n}$ is a rapidly 
decreasing sequence indexed over a countable set, and by $S_G$ we denoted
the collection of all tuples of integers $(k_1,...,k_{|V_G|})$ having the property that
$k_{l'}>> k_{l''}$ as long as the vertices $v_{l'}$ and $v_{l''}$ are adjacent and
$v_{l''}$ is the son of $v_{l'}$.
\footnote{Also, by $m^{k_l}_{v_l, \n}$ we clearly mean an expression of the same type
with the one in (\ref{9}) but with the corresponding bumps $\Phi_{Q_{v_l}^i, i}$ (for which
$l(Q_{v_l})\sim 2^{k_l}$)
being replaced by similar ones $\Phi_{Q_{v_l}^i, \n, i}$ which have the same properties,
uniformly in $\n$.}

The following Lemma will be very helpful.

\begin{lemma}\label{help}
Let $d$, $\vec{a}$, $R_{\vec{a}}$ be as in Definition \ref{Ra}. Then, there exists a symbol
$m_{\vec{a}}$ of the form

\begin{equation}\label{10}
m_{\vec{a}}(x_1,...,x_d) =
\sum_{\n} C_{\n}
\sum_Q
\Phi_{Q_1, \n, 1}(x_1)
...
\Phi_{Q_d, \n, d}(x_d)
\end{equation}
so that $m_{\vec{a}}| R_{\vec{a}} \equiv 1$, where $(C_{\n})_{\n}$ is a rapidly decreasing sequence,
$\Phi_{Q_j, \n, j}$ is a bump adapted to $\frac{9}{10}Q_j$ uniformly in $\n$ and the sum in
(\ref{10}) is taken over $\n$ belonging to a certain countable set and $Q$ belonging to
a certain collection of shifted dyadic quasi-cubes adapted to $R_{\vec{a}}$.
\end{lemma}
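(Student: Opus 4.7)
The plan is to mimic the two-dimensional decomposition (\ref{2})--(\ref{3}) used for $\Delta_2$. I would first construct a finitely overlapping cover of a slight thickening $\widetilde R_{\vec{a}} \supseteq R_{\vec{a}}$ by shifted dyadic quasi-cubes adapted to $R_{\vec{a}}$ in the sense of Definition \ref{Ra}, then pass to a smooth partition of unity subordinate to this cover, and finally expand each partition bump as a tensor Fourier series in the $d$ coordinates to produce the product structure required by (\ref{10}).

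For the covering step, the key observation is that on $R_{\vec{a}}$ the common scale $|a_j x_j - a_{j+1} x_{j+1}| \sim 2^k$ determines a single dyadic parameter $k \in \Z$. For each such $k$, the projection of the slab $R_{\vec{a}} \cap \{|a_j x_j - a_{j+1} x_{j+1}| \sim 2^k\}$ into the $(x_j, x_{j+1})$-plane lies in a shifted dyadic annular band about $\Gamma^j$ of width $\sim 2^k$, and the two-dimensional analysis already recalled in the $\Delta_2$ case provides a finitely overlapping cover of this band by shifted dyadic quasi-cubes $Q^j$ of side length $\sim 2^k$ with $Q^j \subseteq \{a_j x_j < a_{j+1} x_{j+1}\}$ and $\dist(Q^j, \Gamma^j) \sim \diam(Q^j)$. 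Matching these $d-1$ two-dimensional covers along their shared coordinates (possible precisely because all $|Q^j|$ are forced to be comparable to $2^k$) produces a $d$-dimensional family of quasi-cubes $Q = Q_1 \times \cdots \times Q_d$ adapted to $R_{\vec{a}}$, whose $\frac{7}{10}$-dilates finitely cover $\widetilde R_{\vec{a}}$.

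For the expansion step, take a partition of unity $1 = \sum_Q \Phi_Q$ on $\widetilde R_{\vec{a}}$ with each $\Phi_Q$ a bump adapted to $\frac{8}{10}Q$ obtained by translating and dilating a single fixed smooth function, so that all scale-invariant seminorms of $\Phi_Q$ are uniform in $Q$. Expanding each $\Phi_Q$ as a Fourier series on a box slightly larger than its support yields
\[
\Phi_Q(x_1, \dots, x_d) = \sum_{\n \in \Z^d} \alpha_{\n, Q} \prod_{j=1}^d e^{2\pi i n_j (x_j - c_{Q_j})/|Q_j|},
\]
with $|\alpha_{\n, Q}| \lesssim (1+|\n|)^{-M}$ for every $M$, uniformly in $Q$. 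Choosing one-dimensional cutoffs $\psi_{Q_j, j}$ adapted to $\frac{9}{10}Q_j$ and equal to one on the $x_j$-projection of $\supp \Phi_Q$, and setting $\Phi_{Q_j, \n, j}(x_j) := \psi_{Q_j, j}(x_j)\, e^{2\pi i n_j (x_j - c_{Q_j})/|Q_j|}$, one obtains a bump adapted to $\frac{9}{10}Q_j$ uniformly in $\n$. Absorbing the $Q$-uniform decay of $\alpha_{\n, Q}$ into a rapidly decreasing sequence $C_{\n}$ and reordering the double sum gives the presentation (\ref{10}); since $\sum_Q \Phi_Q \equiv 1$ on $R_{\vec{a}}$, the resulting symbol restricts to one there.

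The hard part will be the $d$-dimensional assembly in the first step: one must verify that the quasi-cubes glued from the $d-1$ pairwise two-dimensional covers are jointly adapted to $R_{\vec{a}}$ in the sense of Definition \ref{Ra}, including that every coordinate projection $(Q_j, Q_{j+1})$ simultaneously satisfies the distance condition about $\Gamma^j$. This hinges on the defining property of $R_{\vec{a}}$ that all consecutive distances $|a_j x_j - a_{j+1} x_{j+1}|$ are comparable, which forces matching dyadic side lengths across $j$ and makes the joint construction possible; the remaining steps are routine Fourier-series soft analysis of the type carried out in (\ref{2})--(\ref{4}).
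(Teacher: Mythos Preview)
Your proposal is correct and uses the same underlying idea as the paper---reduce to the $2$-dimensional decomposition of $\chi_{a_j x_j < a_{j+1} x_{j+1}}$ for each consecutive pair $j=1,\dots,d-1$ and then combine---but the paper organizes the combination more economically. Rather than first assembling a $d$-dimensional cover by ``matching along shared coordinates'' and then building a partition of unity and Fourier-expanding (your self-identified hard part), the paper simply takes the \emph{product} of the $d-1$ already tensor-factored two-variable decompositions (\ref{11}) and then restricts the sum to those tuples $(Q^1,\dots,Q^{d-1})$ with $l(Q^1)\sim\cdots\sim l(Q^{d-1})$. Since each factor is identically $1$ on its half-plane and on $R_{\vec a}$ the consecutive gaps are all comparable, the scale restriction loses nothing there, so the product restricted to comparable scales is still $\equiv 1$ on $R_{\vec a}$. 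The tensor structure is then automatic (the shared variable $x_{j+1}$ just carries the product $\Phi_{Q^j_2}\cdot\Phi_{Q^{j+1}_1}$, again a bump), and the adaptedness to $R_{\vec a}$ in the sense of Definition~\ref{Ra} is inherited directly from each two-dimensional factor, so no separate $d$-dimensional assembly argument is needed.
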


\begin{proof}
Using our standard procedure, for each $i=1,...,d-1$ one can decompose as before the symbol
$\chi_{a_ix_i <a_{i+1}x_{i+1}}$ as

\begin{equation}\label{11}
\chi_{a_ix_i <a_{i+1}x_{i+1}} = 
\sum_{\n_i\in \Z^2}C_{\n_i}^i
\sum_{Q^i}
\Phi_{Q^i_1, \n_i, 1}(x_i)
\Phi_{Q^i_2, \n_i, 2}(x_{i+1})
\end{equation}
where as usual $(C^i_{\n_i})_{\n_i}$ is a rapidly decreasing sequence, 
$\Phi_{Q^i_1, \n_i, 1}$, $\Phi_{Q^i_2, \n_i, 2}$ are bumps adapted to 
$\frac{9}{10}Q^i_1$ and $\frac{9}{10}Q^i_2$ respectively and the sum is taken over
shifted dyadic quasi-cubes $Q^i$ of dimension $2$ inside the region
$\{(x_i, x_{i+1})\in\R^2 : a_ix_i < a_{i+1}x_{i+1} \}$ and satisfying

$$\dist (Q^i, \Gamma^i) \sim \diam (Q^i)$$
where $\Gamma^i$ has been defined in Definition \ref{Ra}.

Now, if one takes the product of all the $d-1$ expressions in (\ref{11}) and if one restricts
the summation to those $Q^1,...,Q^{d-1}$ for which $l(Q^1)\sim...\sim l(Q^{d-1})$, one gets
an expression (named ``$m_{\vec{a}}$'') which can clearly be written in the form required by the 
Lemma and which
also satisfies $m_{\vec{a}}| R_{\vec{a}} \equiv 1$ if the implicit constants are chosen
appropriately.

\end{proof}
The following Lemma will also play an important role.

\begin{lemma}\label{mG1}
Let $G\in\G_n$ and $R_G$ be a fixed region of type $G$. Then, there exists a standard symbol
$m_G$ of type $G$ having the property that 

$$m_G|R_G \equiv 1$$
and also that $\supp (m_G)$ is included inside a larger region of the same type $G$.
\end{lemma}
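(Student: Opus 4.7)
The plan is to build $m_G$ as a product of symbols $m_u$, one per non-leaf vertex $u \in V_G$, each produced by Lemma \ref{help} applied to an appropriate cluster of the $\xi$ variables. Fix $u \in V_G$ with sons $u_1,\ldots,u_{\#}$ and introduce the linear change of variables $x_j := \sum_{l \in \I_{u_j}} \xi_l$ together with weights $a_j := 1/|\I_{u_j}|$. On $R_u$, the inequalities (\ref{constraints}) force the spacings internal to each cluster $\{\xi_l : l \in \I_{u_j}\}$ to be much smaller than $|I_{r_{u_j}}|$, so $a_j x_j$ agrees with the common value of that cluster up to a negligible error and the successive differences $a_{j+1} x_{j+1} - a_j x_j$ are all comparable to $|I_{r_{u_j}}|$. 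Thus $(x_1,\ldots,x_{\#}) \in R_{\vec{a}_u}$, with $\vec{a}_u := (a_1,\ldots,a_{\#})$, in the sense of Definition \ref{Ra}. I will define $m_u$ by pulling back the symbol produced by Lemma \ref{help} for $R_{\vec{a}_u}$ under this change of variables. This yields an expression of the exact shape (\ref{9}) (with a rapidly decaying outer sum), identically equal to $1$ on $R_u$, and supported on a slight enlargement of $R_u$.

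Next, I set $m_G := \prod_{u \in V_G} m_u$. Since $R_G = \bigcap_u R_u$, every factor equals $1$ on $R_G$, so $m_G|_{R_G} \equiv 1$; and $\supp(m_G)$ is contained in the intersection of slight enlargements of the $R_u$'s, which is itself a slightly larger region of the same type $G$. So the two properties asserted by the lemma are immediate once $m_G$ is shown to fit into the standard form (\ref{mG}).

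To recast the product in the form (\ref{mG}), I will split each $m_u$ according to the scale of its quasi-cubes, $m_u = \sum_{k_u} m_u^{k_u}$, and collect the rapidly decaying coefficients from each application of Lemma \ref{help} into a single outer sum indexed by a composite $\n = (\n_u)_{u \in V_G}$. The crucial observation is that whenever $v_{l''}$ is a son of $v_{l'}$, the cluster variable $\sum_{l \in \I_{v_{l''}}} \xi_l$ appears in both the $v_{l'}$-factor and the $v_{l''}$-factor: the former localizes it to a quasi-cube of scale $\sim 2^{k_{l'}}$, while the latter forces the spacings internal to that cluster to be of size $\sim 2^{k_{l''}}$. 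Compatibility with the gap inequalities (\ref{constraints}) at $v_{l'}$ then forces $k_{l'} \gg k_{l''}$, so only tuples $(k_u) \in S_G$ contribute nontrivially, putting $m_G$ in the form (\ref{mG}).

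The main obstacle will be this last repackaging step: one must verify that on the support of the combined product the scale relations forced by adjacency are precisely those defining $S_G$, and that the tensor of bumps produced by Lemma \ref{help} can be re-expressed as a product of single-linear-combination bumps of the shape (\ref{9}), with any residual normalization constants absorbed into the rapidly decaying $C_{\n}$. This is a bookkeeping task rather than a deep one, but it is the place where the nested combinatorial structure of $G$ interacts nontrivially with the quasi-cube decomposition, and it must be handled carefully for the resulting $m_G$ to be \emph{standard} in the sense of the definition preceding the statement.
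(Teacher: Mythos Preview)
Your overall plan---build $m_u$ at each non-leaf vertex via Lemma~\ref{help}, take the product, and then repackage into the form~(\ref{mG})---is exactly the paper's strategy, carried out there by induction on the height. But the repackaging step, which you flag as the main obstacle, actually fails as you have stated it. The claim that in the expansion of $\prod_{u}m_u$ only tuples $(k_u)\in S_G$ contribute nontrivially is false.

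Here is a concrete counterexample. Take $n=3$ and $G$ the tree whose root $u$ has two sons: $u_1$ with leaves $1,2$, and the leaf $3$. Then $m_u$ (built from Lemma~\ref{help} in the cluster variables $x_1=\xi_1+\xi_2$, $x_2=\xi_3$) equals $1$ wherever $\tfrac12(\xi_1+\xi_2)<\xi_3$, and $m_{u_1}$ equals $1$ wherever $\xi_1<\xi_2$. At the point $(\xi_1,\xi_2,\xi_3)=(0,100,51)$ both factors equal $1$, so the product is $1$; yet this point lies outside $\Delta_3$ (since $\xi_2>\xi_3$), hence outside every region of type $G$. Tracking scales, the $u$-factor lives at scale $2^{k_u}\sim 1$ while the $u_1$-factor lives at scale $2^{k_{u_1}}\sim 100$, so $k_u\ll k_{u_1}$: the wrong inequality. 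The error in your argument is the sentence ``Compatibility with the gap inequalities~(\ref{constraints}) at $v_{l'}$ then forces $k_{l'}\gg k_{l''}$''. The factor $m_{v_{l'}}$ only constrains the cluster \emph{averages}; it does not enforce~(\ref{constraints}) on the individual gaps $|I_l|$, so no such compatibility is available.

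The fix, which is what the paper does, is to \emph{define} $m_G$ by the sum restricted to $S_G$ from the outset (this is~(\ref{15}) in the inductive step), and then verify two things separately: first, that on $R_G$ the unrestricted product already agrees with the restricted one (because on $R_G$ the genuine gap inequalities~(\ref{constraints}) hold and do force $k_{l'}\gg k_{l''}$); second, that the restricted sum has support in a larger region of type $G$ (which follows because the scale restriction, together with the localization of the cluster averages, now does control the individual gaps). Your argument contains the right ingredients for the first verification but has them pointed in the wrong direction.
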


\begin{proof}
We will prove this by induction with respect to the height of the rooted tree. First, let us 
consider the case of a rooted tree $G$ having height $1$ and an arbitrary number of leaves
$L$ for $2\leq L\leq n$  (see Figure \ref{fig4}).

\begin{figure}[htbp]\centering
\psfig{figure=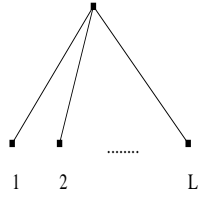, height=1in, width=1in}
\caption{Rooted tree of height $1$.}
\label{fig4}
\end{figure}

The region $R_G$ is then described as being the set
of vectors $(\xi_1,...,\xi_L)$ so that $\xi_1<...<\xi_L$ and satisfying

$$|\xi_1-\xi_2|\sim |\xi_2-\xi_3|\sim ...\sim |\xi_{L-1}-\xi_L|.
$$
The fact that a multiplier $m_G$ of type $G$ having the property that
$m_G|R_G\equiv 1$ exists, is in this case a simple consequence of the previous Lemma \ref{help}.

Let us now assume that our statement holds for any rooted tree $G$ having height smaller
or equal than $h$ and any number of leaves $L$ and we would like to prove that it holds for trees
of height $h+1$ and any number of leaves $L$.

Fix $G$ a rooted tree with height $h+1$ and $L$ leaves, for some $2\leq L\leq n$.
Denote by $u$ the root of it and as usual by $u_1,...,u_{\#}$ the sons of $u$.
If $u_j$ is not a leave, denote also by $G_j$ the corresponding sub-tree of $G$ whose root
is $u_j$. Clearly, all of these sub-trees have heights smaller or equal than $h$.

Fix also $R_G$ a region of type $G$. This region defines the ``projected regions''
$R_{G_j}$ given by

$$R_{G_j}: = \{ (\xi_l)_{l\in\I_{u_j}} : (\xi_l)_{l=1}^L\in R_G \}.$$
By the induction hypothesis, there exist multipliers $m_{G_j}$ of type $G_j$ having the 
property that $m_{G_j}|R_{G_j}\equiv 1$ and so that their supports are also included
 into a slightly larger region of the same corresponding type $G_j$.

Fix now $(\xi_l)_{l=1}^L\in\R_G$. Clearly, since $(\xi_l)_{l\in\I_{u_j}}\in R_{G_j}$
it follows that $m_{G_j}((\xi_l)_{l\in\I_{u_j}}) = 1$.

On the other hand, from the definitions of the regions $R_G$ of type $G$ we also know that

\begin{equation}\label{ineq}
|I_{r_{u_1}}|\sim |I_{r_{u_2}}|\sim...\sim |I_{r_{u_{\#-1}}}| >> |I_l|
\end{equation}
for every $l_u\leq l\leq r_u - 1$ and $l\neq r_{u_1}, r_{u_2},...,r_{u_{\#-1}}$. 

In particular, this implies that the following inequalities hold

\begin{equation}\label{12}
\frac{1}{|\I_{u_1}|}
\sum_{l\in\I_{u_1}} \xi_l <
\frac{1}{|\I_{u_2}|}
\sum_{l\in\I_{u_2}} \xi_l<
...
< \frac{1}{|\I_{u_{\#}}|}
\sum_{l\in\I_{u_{\#}}}\xi_l  
\end{equation}
and also that

\begin{equation}\label{13}
\left|
\frac{1}{|\I_{u_1}|}
\sum_{l\in\I_{u_1}} \xi_l -
\frac{1}{|\I_{u_2}|}
\sum_{l\in\I_{u_2}} \xi_l
\right|\sim ... \sim
\left|
\frac{1}{|\I_{u_{\#-1}}|}
\sum_{l\in\I_{u_{\#-1}}} \xi_l -
\frac{1}{|\I_{u_{\#}}|}
\sum_{l\in\I_{u_{\#}}} \xi_l
\right|.
\end{equation}
As a consequence of Lemma \ref{help}, if we denote as before by
$\vec{a} := (\frac{1}{|\I_{u_1}|},...,\frac{1}{|\I_{u_{\#}}|})$
we know that we can find a multiplier $m_{\vec{a}}$ having the property that
$m_{\vec{a}}|R_{\vec{a}}\equiv 1$. In particular, from (\ref{12}) and (\ref{13})
we see that

$$m_{\vec{a}}(\sum_{l\in\I_{u_1}}\xi_l,...,\sum_{l\in\I_{u_{\#}}}\xi_l) = 1$$
which implies that

\begin{equation}\label{14}
1 = \left(
\prod_j m_{G_j}((\xi_l)_{l\in\I_{u_j}})
\right)
\cdot 
m_{\vec{a}}(\sum_{l\in\I_{u_1}}\xi_l,...,\sum_{l\in\I_{u_{\#}}}\xi_l).
\end{equation}
Denote now by $j_1,...,j_s$ those indices ``$l$'' between $1$ and $\#$ for which
the corresponding son $u_l$ of $u$ is not a leave. It is not difficult to see
that this last expression (\ref{14}) is also equal to

\begin{equation}\label{15}
\sum_{k >> k_{j_1},...,k_{j_s}}
m_{G_{j_1}}^{k_{j_1}}((\xi_l)_{l\in\I_{u_{j_1}}})...
m_{G_{j_s}}^{k_{j_s}}((\xi_l)_{l\in\I_{u_{j_s}}})
m_{\vec{a}}^k(\sum_{l\in\I_{u_1}}\xi_l,...,\sum_{l\in\I_{u_{\#}}}\xi_l),
\end{equation}
where in general, by $m_{\tilde{G}}^{\tilde{k}}$ we denote the multiplier
defined by the same formula (\ref{mG}) but with the additional constraint that the summation
index corresponding to the root of $\tilde{G}$ is kept constant and equal to $\tilde{k}$.
Similarly, by $m_{\vec{a}}^k$ we denote the multiplier defined by the same expression
in Lemma \ref{help} but again with the additional constraint that the summation over $Q$
is restricted to those for which $l(Q)\sim 2^k$.

It is also important to note that the implicit constants in (\ref{15}) are independent on the 
previously fixed vector $(\xi_l)_{l=1}^L\in R_G$
(and dependent only on the constants defining the region). Then, one simply defines the desired
multiplier $m_G$ by the same similar expression in (\ref{15}), and then one can observe
that $m_G$ has all the desired properties.
\end{proof}

Having all these constructions at our disposal, we can actually start describing
the decomposition of the symbol $\chi_{\Delta_n}$. Assume for simplicity that
$\G_n = \{G_1,...,G_N\}$. Clearly, there exist regions $R_{G_1},...,R_{G_N}$ corresponding to 
these rooted trees so that

$$\chi_{\Delta_n} = R_{G_1}\cup...\cup R_{G_N}.$$
Let $m_{G_1}$ be a symbol of type $G_1$ satisfying $m_{G_1}|R_1\equiv 1$ as in Lemma \ref{mG1}
and define

\begin{equation}\label{16}
m^1:= \chi_{\Delta_n} - m_{G_1}.
\end{equation}
Observe that $m^1|R_1\equiv 0$ and as a consequence, we have that

$$\supp (m^1) \subseteq R_2\cup...\cup R_N.$$
Then, consider $m_{G_2}$ a symbol  of type $G_2$ satisfying $m_{G_2}|R_2\equiv 1$
again as in Lemma \ref{mG1} and define

\begin{equation}\label{17}
m^2:= m^1 - m^1\cdot m_{G_2}.
\end{equation}
Observe as before that

$$\supp (m^2) \subseteq \supp (m^1) \subseteq R_2\cup...\cup R_N$$
and since $m^2|R_2\equiv 0$ it follows that actually

$$\supp (m^2) \subseteq R_3\cup...\cup R_N.$$
One can continue in this way and define $m^3,..., m^{N-1}$ recursively and in the end
one observes that $m^N$ defined by

\begin{equation}\label{18}
m^N:= m^{N-1} - m^{N-1}\cdot m_{G_N}
\end{equation}
has the property that $m_N\equiv 0$ since $\supp (m_N)\subseteq R_N$ and
$m_{G_N}|R_N\equiv 1$.

Adding all these equalities (\ref{16}), (\ref{17}), (\ref{18}) together
we obtain that $\chi_{\Delta_n}$ can be written as

\begin{equation}\label{19}
\chi_{\Delta_n} = m_{G_1} + m^1\cdot m_{G_2} + m^2\cdot m_{G_3} + ... + 
m^{N-1}\cdot m_{G_N}.
\end{equation}
Moreover, since $m^1$ has an explicit formula, all the symbols $m^i$ are explicit since they 
have been
defined recursively. It is easy to remark that our symbol $\chi_{\Delta_n}$ can as a result
be written as a finite sum of products of multipliers of type

$$m_{G_{i_1}}\cdot m_{G_{i_2}}\cdot...\cdot m_{G_{i_k}}.$$
However, the next Proposition shows that these product symbols are essentially of the same
type as the standard ones considered above.

Before stating it, we first need to recall the following definition from \cite{mtt-5}.

\begin{definition}
A subset $\Q$ of shifted dyadic quasi-cubes is said to be sparse if and only if for any
two quasi-cubes $Q, Q'\in \Q$ with $Q\neq Q'$ we have $|Q| < |Q'|$ implies
$|C Q| < |Q'|$ and $|Q| = |Q'|$ implies $CQ \cap CQ' = \emptyset$, where $C > 0$ is a fixed 
large constant.
\end{definition}

We would like to assume from now on, without losing the generality, that all our families
of quasi-cubes that implicitly enter the formulas (\ref{mG}) are sparse.

The following simple Lemma will also play an important role later on.

\begin{lemma}\label{fixing}
Let $d\geq 2$, $Q = (Q_1,...,Q_d)$ be a shifted dyadic quasi-cube, $I$ be a shifted dyadic
interval so that $|I| \sim \diam (Q)$ and $0<\alpha<\beta<1$. Consider also bump functions
$\Phi_{Q_1},...,\Phi_{Q_d}, \Phi_I$ adapted to $\alpha Q_1,...,\alpha Q_d$ and
$\alpha I$ respectively. Then, there exists a sequence $(C_{\n})_{\n}$ of rapidly decreasing
complex numbers (independent on $Q$, $I$ !) and bump functions $\widetilde{\Phi_{Q_1, \n}},...,\widetilde{\Phi_{Q_d, \n}}$
uniformly adapted to $\beta Q_1,..., \beta Q_d$ respectively, so that

\begin{equation}\label{fixingg}
\Phi_{Q_1}(x_1)...\Phi_{Q_d}(x_d) \cdot \Phi_I(x_1+...+x_d) =
\sum_{\n} C_{\n} \widetilde{\Phi_{Q_1, \n}}(x_1)...\widetilde{\Phi_{Q_d, \n}}(x_d).
\end{equation}

\end{lemma}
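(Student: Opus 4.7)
The plan is to decouple the factor $\Phi_I(x_1+\cdots+x_d)$ from the product of the $\Phi_{Q_j}(x_j)$ by a Fourier series expansion of $\Phi_I$ on a periodization interval $J$ of size comparable to $|I|$, and then exploit the separation-of-variables identity
$$e^{2\pi i \n (x_1+\cdots+x_d)/|J|}=\prod_{j=1}^d e^{2\pi i \n x_j/|J|}$$
to absorb each resulting exponential into a modulation of the corresponding $\Phi_{Q_j}$.

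First I would choose a shifted dyadic interval $J$ with $|J|\sim|I|\sim\diam(Q)$, large enough (by a fixed factor depending only on $d$ and $\beta$) so that the Minkowski sum $\beta Q_1+\cdots+\beta Q_d$ lies strictly inside $J$; this is possible since each $|Q_j|\sim l(Q)\sim|I|$ and $d$ is fixed. Since $\Phi_I$ is smooth and supported in $\alpha I$, its $|J|$-periodic extension is smooth on $\R/|J|\Z$, so I may Fourier expand
$$\Phi_I(y)=\sum_{\n\in\Z}c_{\n}\,e^{2\pi i \n y/|J|}, \qquad y\in J,$$
with $(c_{\n})_{\n}$ rapidly decreasing by integration by parts (with decay rate depending only on the adaptation constants of $\Phi_I$, and independent of $I$ after an affine rescaling that normalizes $I$). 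Substituting $y=x_1+\cdots+x_d$ and factoring the exponentials gives, on the support of the left side of (\ref{fixingg}),
$$\Phi_{Q_1}(x_1)\cdots\Phi_{Q_d}(x_d)\cdot\Phi_I(x_1+\cdots+x_d)=\sum_{\n\in\Z}c_{\n}\prod_{j=1}^d\psi_{Q_j,\n}(x_j),$$
where $\psi_{Q_j,\n}(x_j):=\Phi_{Q_j}(x_j)\,e^{2\pi i \n x_j/|J|}$. Outside this support the identity holds trivially since both sides vanish.

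Each $\psi_{Q_j,\n}$ is supported in $\alpha Q_j\subseteq\beta Q_j$, and since $|J|\sim|Q_j|$ Leibniz yields $|\partial^k\psi_{Q_j,\n}|\lesssim_k (1+|\n|)^k|Q_j|^{-k}$. To obtain bumps \emph{uniformly} adapted to $\beta Q_j$ in $\n$ I would absorb this polynomial blow-up into the coefficient, setting
$$\widetilde{\Phi_{Q_j,\n}}:=(1+|\n|)^{-K}\psi_{Q_j,\n}, \qquad C_{\n}:=c_{\n}(1+|\n|)^{Kd},$$
where $K$ is a fixed integer chosen large enough to dominate the finite order of derivatives required by the paper's notion of ``adapted.'' Since $(c_{\n})_{\n}$ decays faster than any polynomial, $(C_{\n})_{\n}$ is still rapidly decreasing, and (\ref{fixingg}) holds with these choices.

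The main technical point is precisely this balance between the polynomial growth (in $|\n|$) of derivatives of the modulated bumps and the super-polynomial decay of the Fourier coefficients; the scale-matching $|J|\sim|I|\sim|Q_j|$ is essential to keep the growth at only $(1+|\n|)^k$ rather than anything worse, and to ensure that the Fourier expansion of $\Phi_I$ on $J$ is compatible with the supports of the $\Phi_{Q_j}$. The independence of $(C_{\n})_{\n}$ from $Q$ and $I$ is a routine consequence of rescaling to a normalized configuration.
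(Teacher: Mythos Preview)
Your argument is correct and follows the same idea as the paper's proof: use a Fourier series expansion to separate variables, with rapid decay of the coefficients coming from smoothness, and the polynomial growth (in $|\n|$) of derivatives of the modulated bumps absorbed back into the coefficient sequence. The only cosmetic difference is that the paper first introduces auxiliary plateau bumps $\widetilde{\Phi_{Q_j}}$ adapted to $\beta Q_j$ with $\widetilde{\Phi_{Q_j}}\equiv 1$ on $\supp\Phi_{Q_j}$ and then expands the \emph{entire} product $m(x_1,\dots,x_d)$ as a $d$-dimensional Fourier series on $Q$, whereas you expand only $\Phi_I$ one-dimensionally on an interval $J$ and keep the original $\Phi_{Q_j}$; either implementation yields the desired decomposition.
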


\begin{proof}
First, consider bump functions $\widetilde{\Phi_{Q_1}},...,\widetilde{\Phi_{Q_d}}$
adapted to $\beta Q_1,...,\beta Q_d$ and having the property that

$$\widetilde{\Phi_{Q_j}}\equiv 1$$
on the support of $\Phi_{Q_j}$ for every $j=1,...,d$. In particular, the left hand side
of (\ref{fixingg}) can be rewritten as

$$\widetilde{\Phi_{Q_1}}(x_1)...\widetilde{\Phi_{Q_d}}(x_d)\cdot
[\Phi_{Q_1}(x_1)...\Phi_{Q_d}(x_d) \cdot \Phi_I(x_1+...+x_d)] :=
\widetilde{\Phi_{Q_1}}(x_1)...\widetilde{\Phi_{Q_d}}(x_d)\cdot m(x_1,...,x_d).$$
Then, one just has to write $m$ as a multiple Fourier series in the variables
$x_1,...,x_d$ on $Q$ and to take advantage of the smoothness of $m$.

\end{proof}

\begin{proposition}\label{G}
Let $G_1, G_2 \in \G_n$ and $m_{G_1}$, $m_{G_2}$ symbols associated to $G_1$ and $G_2$ 
respectively. Then, there exists a rooted tree $G\in \G_n$ so that

$$m_{G_1}\cdot m_{G_2} = m_G$$
for a certain symbol $m_G$ of type $G$.
\end{proposition}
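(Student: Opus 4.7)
The plan is to realize $m_{G_1}\cdot m_{G_2}$ as a symbol of type $G$, where $G$ is a ``common refinement'' of $G_1$ and $G_2$, with Lemma~\ref{fixing} playing the key role in reconciling the bumps arising from both trees. I would first translate the problem into combinatorics: each $G_i$ is determined by its laminar family $\mathcal{F}_i=\{\I_v : v\in G_i\}$ of consecutive subsets of $\{1,\ldots,n\}$, and the constraints (\ref{constraints}) impose a hierarchy of $\gg$ relations among the scales of the partial sums $\sigma_v=\sum_{l\in \I_v}\xi_l$.

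The argument then splits on whether $\mathcal{F}_1\cup\mathcal{F}_2$ is again laminar. If not, say because there exist $v\in V_{G_1}$ and $w\in V_{G_2}$ with $\I_v$ and $\I_w$ overlapping but not nested, a direct inspection of the spine constraints at the parents of $v$ in $G_1$ and of $w$ in $G_2$ yields two incompatible estimates on some gap $|I_l|$ lying on one side of the overlap. This forces $\supp(m_{G_1})\cap \supp(m_{G_2})=\emptyset$ (even allowing for the small enlargements produced by Lemma~\ref{mG1}), so the product vanishes and we may take $G$ to be any tree in $\G_n$ with $m_G\equiv 0$.

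In the laminar case I would let $G$ be the tree whose vertex family is precisely $\mathcal{F}_1\cup\mathcal{F}_2$ (coalescing any node that would end up with only one child), so that $G\in\G_n$. Expanding via (\ref{mG}) gives
\[
m_{G_1}\cdot m_{G_2}=\sum_{\vec{n_1},\vec{n_2}}C_{\vec{n_1}}C_{\vec{n_2}}\sum_{\vec{k_1}\in S_{G_1},\,\vec{k_2}\in S_{G_2}}\;\prod_{v\in V_{G_1}}m^{k_{1,v}}_{v,\vec{n_1}}\cdot\prod_{w\in V_{G_2}}m^{k_{2,w}}_{w,\vec{n_2}}.
\]
At each vertex $v\in V_{G_1}\cap V_{G_2}$ the two localizations of $\sigma_v$ must agree up to bounded constants for the product to have nonempty support, which collapses $(\vec{k_1},\vec{k_2})$ into a single tuple $\vec{k}^G\in S_G$. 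The crucial step is then to redistribute the bumps at those vertices of $G_1$ (or $G_2$) whose children get further split in $G$: if a child $v_j^{G_1}$ of some $u\in V_{G_1}$ is an internal vertex of $G$ with finer children $w_1,\ldots,w_s$, the factor $\Phi_{Q_u^j,j}(\sigma_{v_j^{G_1}})$ must be expressed in terms of the finer sums $\sigma_{w_i}$. Lemma~\ref{fixing}, with $\Phi_{Q_u^j,j}$ playing the role of $\Phi_I$ and the $\sigma_{w_i}$ playing the roles of the $x_i$, rewrites this bump as a rapidly decreasing Fourier series of bumps in the $\sigma_{w_i}$ that can be absorbed into the quasi-cube decomposition provided by $m_{v_j^{G_1}}$ in $G$. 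Applying this symmetrically for $G_2$ and collecting bumps by vertex of $G$ yields the form (\ref{mG}) for $G$.

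The main obstacle is the bookkeeping in this last step: one must verify that the scale tuple obtained after redistribution indeed lies in $S_G$ (i.e., the parent--child $\gg$ relations required by $G$ are inherited from those in $S_{G_1}$ and $S_{G_2}$ together with the support-matching imposed at shared vertices), and that the resulting coefficient family remains rapidly decreasing uniformly in all parameters. No new analytic input beyond Lemma~\ref{fixing} is needed, but keeping track of how bumps at different depths of the two trees rearrange onto $G$ is combinatorially delicate and is where the bulk of the detailed proof should reside.
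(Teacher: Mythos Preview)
Your approach differs substantively from the paper's. The paper proceeds by induction on $n$: after matching the root scales (so that $m_{G_1}\cdot m_{G_2}=\sum_k m_{G_1}^k m_{G_2}^k$), it splits into Case~1, where the roots of $G_1,G_2$ share a cut point $r_{u_{i_0}}=r_{v_{j_0}}=l_0$, and Case~2, where they do not. In Case~1 the product factors across $l_0$ into two smaller problems handled by induction and Lemma~\ref{fixing} (with four subcases $1_a$--$1_d$ according to whether the half-trees inherit the root or a son as their top). In Case~2 the paper constructs \emph{retracts} $Ret(G_1),Ret(G_2)$ --- flattened trees obtained by promoting certain descendants to be sons of the root --- and shows $m_{G_1}\cdot m_{G_2}=m_{Ret(G_1)}\cdot m_{Ret(G_2)}$, reducing to Case~1.

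Your non-laminar $\Rightarrow$ zero claim is the main gap. The constraints defining $\supp(m_{G_i})$ are on the \emph{partial sums} $\sum_{l\in\I_{u_j}}\xi_l$ (see (\ref{9})), not directly on the gaps $|I_l|$; translating between the two requires the full hierarchy of scale constraints and the implicit constants in both ``$\sim$'' and ``$\gg$''. Whether two such chains of inequalities actually contradict depends on those constants, which the Proposition does not fix --- the $m_{G_i}$ are generic symbols of type $G_i$. Indeed, the paper's own analysis of Case~2 does \emph{not} conclude that the product vanishes: it derives instead (see (\ref{!})) that for the product to be nonzero the scales at certain deeper vertices must \emph{coincide} with the root scale, and the retract records exactly this collapse. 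So your dichotomy, even if it could be made rigorous for sufficiently separated constants, is not the robust route the paper takes.

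In the laminar case your sketch is plausible but underspecified. Verifying that the collapsed tuple lies in $S_G$ --- in particular that $k_v\gg k_w$ holds for a parent--child pair in $G$ where $v$ comes from $G_1$ and $w$ from $G_2$, with no shared vertex between them --- requires precisely the kind of support-matching analysis that the paper distributes across its subcases $1_a$--$1_d$, and cannot be waved through as ``bookkeeping''.
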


\begin{proof}
Fix $G_1, G_2 \in \G_n$. If $m_{G_1}$ and $m_{G_2}$ are symbols of type $G_1$ and $G_2$
respectively, then one can write

\begin{equation}\label{product}
m_{G_1}\cdot m_{G_2} =
(\sum_k m_{G_1}^k) (\sum_{\widetilde{k}} m_{G_2}^{\widetilde{k}}) =
\sum_{k, \widetilde{k}} m_{G_1}^k m_{G_2}^{\widetilde{k}} =
\sum_{k \sim \widetilde{k}} m_{G_1}^k m_{G_2}^{\widetilde{k}}
\end{equation}
since $m_{G_1}^k m_{G_2}^{\widetilde{k}}\equiv 0$ unless $k\sim \widetilde{k}$.
Clearly, since both $k$ and $\widetilde{k}$ run inside sparse sets, for any fixed
$k$ there is a unique $\widetilde{k}$ for which $k\sim \widetilde{k}$. By abuse of notation
we will denote from now on the corresponding $m_{G_2}^{\widetilde{k}}$ simply by
$m_{G_2}^k$.

We will prove by induction over $n$ that there exists $G\in\G_n$ so that

\begin{equation}\label{induction}
m_{G_1}^k\cdot m_{G_2}^k = m_G^k
\end{equation}
for every $k$, for a certain symbol $m_G$ of type $G$.
If we accept for a moment (\ref{induction}), then (\ref{product}) becomes

$$\sum_km_{G_1}^k\cdot m_{G_2}^k = \sum_k m_G^k = m_G$$
which would complete our proof. 

Denote by $u$ the root of $G_1$, by $v$ the root of $G_2$ and by $w$ the root of $G$.
Denote also by $u_1,...,u_\#$ 
the sons of $u$, by $v_1,...,v_{\widetilde{\#}}$ the sons of $v$ and by
$w_1,...,w_{\widetilde{\widetilde{\#}}}$ the sons of $G$. We will in fact prove that
the rooted tree $G$ we are looking for has also the following $refinement$ $property$ 
(with respect to $G_1$ and $G_2$) which says that the sets of indices
$\I_{u_i}$ and $\I_{v_j}$ can each be written as a disjoint union of various sets of indices of
type $\I_{w_l}$ \footnote{ And we add this ``refinement property'' to the induction hypothesis.}.

It remains to prove the inductive claim.

Clearly, the case $n=2$ is completely obvious, since there is only one type of rooted trees
in $\G_2$. Assume now that our statement holds for indices up to $n-1$ and we will prove it for
$n$.

\underline{Case 1}

Let us first assume that we are in the easier case when there exist $r_{u_{i_0}}$ for
$1\leq i_0\leq \#-1$ and $r_{v_{j_0}}$ for $1\leq j_0\leq \widetilde{\#} - 1$ so that

$$r_{u_{i_0}} = r_{v_{j_0}} := l_0.$$
In this case, define $G'_1$ and $G'_2$ to be the minimal subtrees of $G_1$ and $G_2$ respectively
whose leaves are only those indexed from $1$ to $l_0$.
Similarly, define $G''_1$ and $G''_2$ to be the minimal subtrees
whose leaves are those indexed from $l_0 +1$ to $n$. It is not difficult to remark that the roots
 of $G'_1$ and $G''_1$ are either equal to the root
$u$ of $G_1$ or they are sons of $u$. Similarly, the roots of $G''_1$ and $G''_2$ are either 
equal to the root $v$ of $G_2$, or they are sons of $v$.
As a consequence of this fact, we are facing several subcases.

\underline{Case $1_a$} Assume that $G'_1$ and $G''_1$ have the same root with $G_1$ and that 
$G'_2$ and $G''_2$ have the same root with $G_2$.

Then, for a fixed $k$, one can write (using (\ref{mG}))

$$m_{G_1}^k ((\xi_l)_{l=1}^n) = 
\sum _{\n_1} C^1_{\n_1} m_{G_1, \n_1}^k( (\xi_l)_{l=1}^n)
$$
and similarly

$$m_{G_2}^k ((\xi_l)_{l=1}^n) = 
\sum _{\n_2} C^2_{\n_2} m_{G_2, \n_2}^k( (\xi_l)_{l=1}^n).$$

Using (\ref{9}) (in the case when the vertex is either the root of $G_1$ or $G_2$)
and (\ref{mG}) one can further split $m_{G_1, \n_1}^k$ and $m_{G_2, \n_2}^k$ naturally as

$$m_{G_1, \n_1}^k = \sum_{Q_u} m_{G_1, \n_1}^{k, Q_u}$$ and

$$m_{G_2, \n_2}^k = \sum_{Q_v} m_{G_2, \n_2}^{k, Q_v}.$$
Fix now $\n_1, \n_2, Q_u, Q_v$ and consider the corresponding product term

\begin{equation}\label{productt}
m_{G_1, \n_1}^{k, Q_u}\cdot m_{G_2, \n_2}^{k, Q_v}.
\end{equation}
Observe that for every fixed $Q_u$ there are at most $O(1)$ quasi-cubes $Q_v$ for which the above product is not identically equal to zero. One can then rewrite
$m_{G_1, \n_1}^{k, Q_u}$ and $m_{G_2, \n_2}^{k, Q_v}$ more explicitly as 
\footnote{$m_{G'_1, \n_1}^{k}$ is obtained from the formula (\ref{mG}) corresponding to
$m_{G_1, \n_1}^k$, by taking only the sums and products associated to the vertices of $G_1'$ and 
with the extra ``twist'' given by the fact
that the symbol associated to the root $u$ is of the form 
$\sum_{\widetilde{Q}_u}[\Phi^{\n_1}_{\widetilde{Q}_u^1, 1} (\sum_{l\in \I_{u_1}} \xi_l ) ... 
\Phi^{\n_1}_{\widetilde{Q}_u^{i_0}, i_0} (\sum_{l\in \I_{u_{i_0}}} \xi_l )]$ and has the 
property that it is identically equal to $1$ on the support of
$\sum_{Q_u}[\Phi^{\n_1}_{Q_u^1, 1} (\sum_{l\in \I_{u_1}} \xi_l ) ... 
\Phi^{\n_1}_{Q_u^{i_0}, i_0} (\sum_{l\in \I_{u_{i_0}}} \xi_l )]$. Similarly, one defines
$m_{G''_1, \n_1}$, $m_{G'_2, \n_2}$ and $m_{G''_2, \n_2}$. }

\begin{equation}
m_{G_1, \n_1}^{k, Q_u} = 
\left[\Phi^{\n_1}_{Q_u^1, 1} (\sum_{l\in \I_{u_1}} \xi_l ) ... \Phi^{\n_1}_{Q_u^{i_0}, i_0} (\sum_{l\in \I_{u_{i_0}}} \xi_l )\right]\cdot
\left[\Phi^{\n_1}_{Q_u^{i_0 + 1}, i_0 + 1} (\sum_{l\in \I_{u_{i_0 + 1}}} \xi_l ) ... \Phi^{\n_1}_{Q_u^{\#}, \#} (\sum_{l\in \I_{u_{\#}}} \xi_l )\right]\cdot
\end{equation}

$$
m_{G'_1, \n_1}^{k}( (\xi_l)_{l=1}^{l_0}) \cdot m_{G''_1, \n_1}^{k}( (\xi_l)_{l= l_0 + 1}^{n})
$$
and 

\begin{equation}
m_{G_2, \n_2}^{k, Q_v} = 
\left[\Phi^{\n_2}_{Q_v^1, 1} (\sum_{l\in \I_{v_1}} \xi_l ) ... \Phi^{\n_2}_{Q_v^{j_0}, j_0} (\sum_{l\in \I_{u_{j_0}}} \xi_l )\right]\cdot
\left[\Phi^{\n_2}_{Q_v^{j_0 + 1}, j_0 + 1} (\sum_{l\in \I_{v_{j_0 + 1}}} \xi_l ) ... \Phi^{\n_2}_{Q_v^{\widetilde{\#}}, \widetilde{\#}} 
(\sum_{l\in \I_{v_{\widetilde{\#}}}} \xi_l )\right]\cdot
\end{equation}

$$
m_{G'_2, \n_2}^{k}( (\xi_l)_{l=1}^{l_0}) \cdot m_{G''_2, \n_2}^{k}( (\xi_l)_{l= l_0 + 1}^{n}).
$$
In particular, the product (\ref{productt}) can be written as

\begin{equation}\label{productt1}
\left[\Phi^{\n_1}_{Q_u^1, 1} (\sum_{l\in \I_{u_1}} \xi_l ) ... \Phi^{\n_1}_{Q_u^{i_0}, i_0} (\sum_{l\in \I_{u_{i_0}}} \xi_l )\right]\cdot
\left[\Phi^{\n_2}_{Q_v^1, 1} (\sum_{l\in \I_{v_1}} \xi_l ) ... \Phi^{\n_2}_{Q_v^{j_0}, j_0} (\sum_{l\in \I_{u_{j_0}}} \xi_l )\right]\cdot
\end{equation}

$$
m_{G'_1, \n_1}^{k}( (\xi_l)_{l=1}^{l_0})\cdot
m_{G'_2, \n_2}^{k}( (\xi_l)_{l=1}^{l_0})\cdot
$$

$$
\left[\Phi^{\n_1}_{Q_u^{i_0 + 1}, i_0 + 1} (\sum_{l\in \I_{u_{i_0 + 1}}} \xi_l ) ... \Phi^{\n_1}_{Q_u^{\#}, \#} (\sum_{l\in \I_{u_{\#}}} \xi_l )\right]\cdot
\left[\Phi^{\n_2}_{Q_v^{j_0 + 1}, j_0 + 1} (\sum_{l\in \I_{v_{j_0 + 1}}} \xi_l ) ... \Phi^{\n_2}_{Q_v^{\widetilde{\#}}, \widetilde{\#}} 
(\sum_{l\in \I_{v_{\widetilde{\#}}}} \xi_l )\right]\cdot
$$

$$
m_{G''_1, \n_1}^{k}( (\xi_l)_{l= l_0 + 1}^{n})\cdot
m_{G''_2, \n_2}^{k}( (\xi_l)_{l= l_0 + 1}^{n}).
$$
By using the induction hypothesis, there exist two trees $G'$ and $G''$ and symbols associated to them with the property that

$$m_{G'_1, \n_1}^{k}\cdot m_{G'_2, \n_2}^{k} = m^k_{G', \n_1, \n_2}$$
and

$$m_{G''_1, \n_1}^{k}\cdot m_{G''_2, \n_2}^{k} = m^k_{G'', \n_1, \n_2}$$
for every $k\in\Z$. Denote now by $G$ the rooted tree obtained by concatenating $G'$ and $G''$ together and let $w$ denote the root of $G$.

Using these, the expression (\ref{productt1}) becomes

\begin{equation}\label{productt2}
\sum_{Q'_w} \sum_{Q''_w}
\end{equation}

$$
\left[\Phi^{\n_1}_{Q_u^1, 1} (\sum_{l\in \I_{u_1}} \xi_l ) ... \Phi^{\n_1}_{Q_u^{i_0}, i_0} (\sum_{l\in \I_{u_{i_0}}} \xi_l )\right]\cdot
\left[\Phi^{\n_2}_{Q_v^1, 1} (\sum_{l\in \I_{v_1}} \xi_l ) ... \Phi^{\n_2}_{Q_v^{j_0}, j_0} (\sum_{l\in \I_{u_{j_0}}} \xi_l )\right]\cdot
m^{k, Q'_w}_{G', \n_1 , \n_2}\cdot
$$

$$
\left[\Phi^{\n_1}_{Q_u^{i_0 + 1}, i_0 + 1} (\sum_{l\in \I_{u_{i_0 + 1}}} \xi_l ) ... \Phi^{\n_1}_{Q_u^{\#}, \#} (\sum_{l\in \I_{u_{\#}}} \xi_l )\right]\cdot
\left[\Phi^{\n_2}_{Q_v^{j_0 + 1}, j_0 + 1} (\sum_{l\in \I_{v_{j_0 + 1}}} \xi_l ) ... \Phi^{\n_2}_{Q_v^{\widetilde{\#}}, \widetilde{\#}} 
(\sum_{l\in \I_{v_{\widetilde{\#}}}} \xi_l )\right]\cdot
m^{k, Q''_w}_{G'', \n_1 , \n_2}.
$$
Observe now as before that for our fixed $Q_u$ and $Q_v$ there exist at most $O(1)$ quasi-cubes $Q'_w$ and $Q''_w$ for which the previous expression
(\ref{productt2}) does not vanish.

Also, by using the fact that $G'$ and $G''$ have the $refinement$ $property$ ( with respect to $(G'_1, G'_2)$ and $(G''_1, G''_2)$ respectively )
one can successively apply the previous ``fixing'' Lemma \ref{fixing} and rewrite (\ref{productt2}) in the form

$$\sum_{\n} C_{\n} \widetilde{m}^{k, Q'_w}_{G', \n_1 , \n_2, \n}\cdot 
\widetilde{m}^{k, Q''_w}_{G'', \n_1 , \n_2, \n} := $$

$$\sum_{\n} C_{\n}  \widetilde{m}^{k, Q'_w 
\times Q''_w }_{G, \n_1, \n_2, \n}.$$
Summing now over all the previously fixed parameters $\n_1, \n_2, Q_u, Q_v$ means summing 
over $\n_1, \n_2$ and 
$Q'_w \times Q''_w$ and as a consequence, our original product 
$m_{G_1}^k\cdot m_{G_2}^k$ can be clearly written
as $m^k_G$ for a certain multiplier $m_G$ of type $G$.

\underline{Case $1_b$} Assume that the roots of $G'_1, G'_2$ and the roots of $G''_1, G''_2$ 
are all sons of $u$ and $v$ respectively.

It is then not difficult to see that this can only happen if both $u$ and $v$ have precisely 
two sons $u_1, u_2$ and $v_1, v_2$. And this means that
the root of $G'_1$ is $u_1$, the root of $G''_1$ is $u_2$, the root of $G'_2$ is $v_1$ and the 
root of $G''_2$ is $v_2$. Using the same notations as before,
this time one can write
\footnote{ This time by $m_{G'_1, \n_1}$ we denote the symbol obtained from the formula (\ref{mG}) corresponding to $m_{G_1, \n_1}$, by taking
only the sums and products associated to the vertices of $G'_1$; and all the other symbols are defined similarly.}

$$
m_{G_1, \n_1}^{k, Q_u} = 
\left[\Phi^{\n_1}_{Q_u^1, 1} (\sum_{l = 1}^{l_0} \xi_l )\right]\cdot
\left[\Phi^{\n_1}_{Q_u^2, 2} ( \sum_{l = l_0 + 1}^{n} \xi_l )\right]\cdot
\left[ \sum_{k'_1 << k} m^{k'_1}_{G'_1, \n_1} ((\xi_l)_{l=1}^{l_0})      \right]\cdot
\left[ \sum_{k''_1 << k } m^{k''_1}_{G''_1, \n_1} ((\xi_l)_{l=l_0 + 1}^{n})      \right]
$$
and 

$$
m_{G_2, \n_2}^{k, Q_v} = 
\left[\Phi^{\n_2}_{Q_v^1, 1} (\sum_{l = 1}^{l_0} \xi_l )\right]\cdot
\left[\Phi^{\n_2}_{Q_v^2, 2} ( \sum_{l = l_0 + 1}^{n} \xi_l )\right]\cdot
\left[ \sum_{k'_2 << k} m^{k'_2}_{G'_2, \n_2} ((\xi_l)_{l=1}^{l_0})      \right]\cdot
\left[ \sum_{k''_2 << k } m^{k''_2}_{G''_2, \n_2} ((\xi_l)_{l=l_0 + 1}^{n})      \right].
$$
In this case, it is very easy to remark that when one considers the product $m^{k, Q_u}_{G_1, \n_1}\cdot m^{k, Q_v}_{G_2, \n_2}$
the terms of the previous two expressions match each other perfectly (there is no need of the ``fixing'' lemma this time) and the induction hypothesis
can be applied twice, solving the problem.

\underline{Case $1_c$} Assume that we are in a ``mixed case'' when the roots of $G'_1, G'_2$ are 
sons of $u$ and $v$ respectively and the roots of
$G''_1, G''_2$ coincide with $u$ and $v$ respectively. It is not difficult to see that this case 
can be solved by combining the arguments used for the previous two
cases.

Finally, we are left with

\underline{ Case $1_d$} Assume that we are in a ``skewed situation'' now, 
when for instance the root of $G'_1$ is a son of $u$, the top of $G''_1$ is $u$,
the root of $G'_2$ is $v$ and the root of $G''_2$ is a son of $v$. It is easy to see that in 
fact the root of $G'_1$ is $u_1$ while the root of $G''_2$ is 
$v_{\widetilde{\#}}$. As a consequence, the two multipliers $m_{G_1, \n_1}^{k, Q_u}$ and $m_{G_2, \n_2}^{k, Q_v}$ become

$$
m_{G_1, \n_1}^{k, Q_u} = 
\left[\Phi^{\n_1}_{Q_u^1, 1} (\sum_{l = 1}^{l_0} \xi_l )\right]\cdot
\left[\Phi^{\n_1}_{Q_u^2, 2} ( \sum_{l \in \I_{u_2} }\xi_l ) ... \Phi^{\n_1}_{Q_u^{\#}, \#} (\sum_{l\in \I_{u_{\#}}} \xi_l ) \right]\cdot
\left[ \sum_{k'_1 << k} m^{k'_1}_{G'_1, \n_1} ((\xi_l)_{l=1}^{l_0})      \right]\cdot
\left[ m^{k}_{G''_1, \n_1} ((\xi_l)_{l=l_0 + 1}^{n})      \right]
$$
and 

$$
m_{G_2, \n_2}^{k, Q_v} = 
\left[\Phi^{\n_2}_{Q_v^1, 1} (\sum_{l\in \I_{v_{1}}} \xi_l ) ... \Phi^{\n_2}_{Q_v^{\widetilde{\#} - 1}, \widetilde{\#} - 1} 
(\sum_{l\in \I_{v_{\widetilde{\#} - 1}}} \xi_l )\right]\cdot
\left[\Phi^{\n_2}_{Q_v^{\widetilde{\#}}, \widetilde{\#}} 
(\sum_{l = l_0 + 1}^n \xi_l )\right] \cdot
\left[ m^{k}_{G'_2, \n_n} ((\xi_l)_{l=1}^{l_0})\right]\cdot
\left[ \sum_{k''_2 << k } m^{k''_2}_{G''_2, \n_2} ((\xi_l)_{l=l_0 + 1}^{n})\right].
$$
Then, one observes that in order for the product between $m_{G_1, \n_1}^{k, Q_u}$ and $m_{G_2, \n_2}^{k, Q_v}$ to be nonzero, one must have
$k'_1 \sim k$ and also $k''_2 \sim k$. But then, the whole case becomes similar to the previously studied Case $1_a$.
\footnote{ Of course, all the other possible ``skewed'' cases can be treated similarly.}

\underline{Case 2}

Assume now that $r_{u_i} \neq r_{v_j}$ for any
$1\leq i \leq \#-1$ and $1\leq j\leq \widetilde{\#}-1$. In this case we claim that
one can construct two other rooted trees $Ret(G_1)$ and $Ret(G_2)$ (a ``retract'' of
$G_1$ and a ``retract'' of $G_2$) having the property that the pair
$(Ret(G_1), Ret(G_2))$ satisfies the condition of Case 1 and also such that

\begin{equation}\label{retract}
m_{G_1}\cdot m_{G_2} = m_{Ret(G_1)}\cdot m_{Ret(G_2)}
\end{equation}
for certain symbols $m_{Ret(G_1)}$ and $m_{Ret(G_2)}$ of type $Ret(G_1)$ and 
$Ret(G_2)$ respectively. Clearly, (\ref{retract}) allows us to reduce the general Case 2
to the Case 1 discussed before.

The idea of proving the claim is very natural. In order for $m_{G_1}\cdot m_{G_2} $
to be non-zero at a given point $(\xi_l)_{l=1}^n$ one must have 

$$|I_{r_{u_1}}| \sim ... \sim |I_{r_{u_{\#-1}}}| >> |I_l|$$
for any $l\neq r_{u_1},..., r_{u_{\#-1}}$
and also

$$|I_{r_{v_1}}| \sim ... \sim |I_{r_{v_{\widetilde{\#}-1}}}| >> |I_l|$$
for any other $l\neq r_{v_1},..., r_{v_{\widetilde{\#}-1}}$. In particular, one has to have

\begin{equation}\label{!}
|I_{r_{u_1}}| \sim ... \sim |I_{r_{u_{\#-1}}}| \sim
|I_{r_{v_1}}| \sim ... \sim |I_{r_{u_{\widetilde{\#}-1}}}| >> |I_l|
\end{equation}
for any other $l$ different than all these indices. Intuitively, it is clear that all
these conditions should induce many equivalences between the summation indices which appear
in the definitions of $m_{G_1}$ and $m_{G_2}$ in (\ref{mG}) and this should allow one
to simplify the trees.

Denote by $S_1$ the set

\begin{equation}
S_1:= \{ r_{u_1},...,r_{u_{\#-1}} \}
\end{equation}
and by $S_2$ the set

\begin{equation}
S_1:= \{ r_{v_1},...,r_{v_{\widetilde{\#}-1}} \}.
\end{equation}
We will describe the construction of $Ret(G_1)$, the one for $Ret(G_2)$ being similar.

The root of $Ret(G_1)$ will be the same as the one of $G_1$ itself, but the sons will be 
different. They are selected from the former vertices of $G_1$ as follows.

First, we look at all the sons of $u$, namely $u_1,...,u_{\#}$ and select those $u_j$ having
the property that the sets $\widetilde{\I_{u_j}}:= [l_{u_j}, r_{u_j})\cap \N$
do not contain any element from $S_2$, i.e. $\widetilde{\I_{u_j}}\cap S_2 =\emptyset$.

At the second step we are left with those not selected sons of $u$. Consider the sons of all of 
them. If $v$ is such a new son (therefore a grandson of $u$) having the property that
$\widetilde{\I_v}\cap S_2 = \emptyset$ then we select it, if not, we do not. 
Clearly, this selection
procedure ends after a finite number of steps producing the selected vertices $V_{G_1}^{select}$.

As we already mentioned, the vertices in this set will be (by definition) the sons of the top
of $Ret(G_1)$ and then the rest of the tree is constructed by simply ``copying and pasting''
the subtrees of $G_1$ whose roots are these selected vertices in $V_{G_1}^{select}$, see
Figure \ref{fig6} for a particlular case.

\begin{figure}[htbp]\centering
\psfig{figure=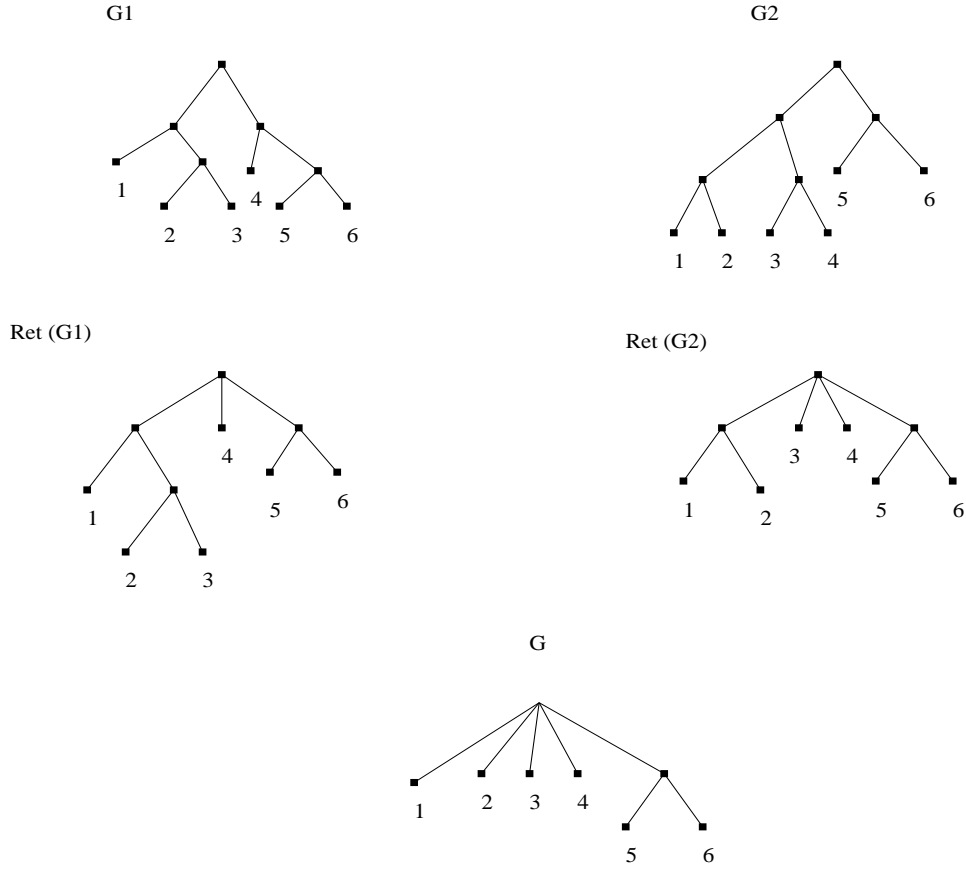, height=4.5in, width=5in}
\caption{Retracts}
\label{fig6}
\end{figure}

It is not difficult to observe that by construction, one has that the sets
$(\I_v)_{v\in V_{G_1}}$ form a partition of $\{1,...,n\}$ and also that

$$S_1\cup S_2 \subseteq \{ r_v : v\in V_{G_1}^{select}, r_v \neq n \}$$
and a similar inclusion holds for $G_2$. This shows that $Ret(G_1)$ and $Ret(G_2)$ are indeed
as in Case 1.

If in the definition of $m_{G_1}$ one takes into account the new constraints induced by
the relations (\ref{!}), one obtains a new formula which is clearly a multiplier of type
$Ret(G_1)$ which we call $m_{Ret(G_1)}$. Similarly, one defines $m_{Ret(G_2)}$. There is only
one technical issue left to be solved. If one looks carefully at the products corresponding
to the top of $Ret(G_1)$ (in the definition (\ref{mG})), one sees not only the standard
expressions of the form

$$\prod_{v\in V_{G_1}^{select}} \Phi_v^k (\sum_{l\in\I_v} \xi_l)$$
but also extra terms of type

$$\Phi_w^k(\sum_{l\in\I_w} \xi_l)$$
coming from various other vertices $w$. However, it is not difficult to see that by construction,
one always has that the sets $\I_w$ can be written as unions of $\I_v$ for various 
$v\in V_{G_1}^{select}$ and as a consequence, the issue is easily solved by repeatedly
applying the previous ``fixing'' Lemma \ref{fixing}.

\end{proof}

All of these show that indeed our initial symbol $\chi_{\xi_1 < ... < \xi_n}$ can be written as

$$\chi_{\xi_1 < ... < \xi_n} = \sum_G m_G$$
for various multipliers $m_G$ of type $G$. The reason for which we like this decomposition
will be clearer in the next section.

\begin{figure}[htbp]\centering
\psfig{figure=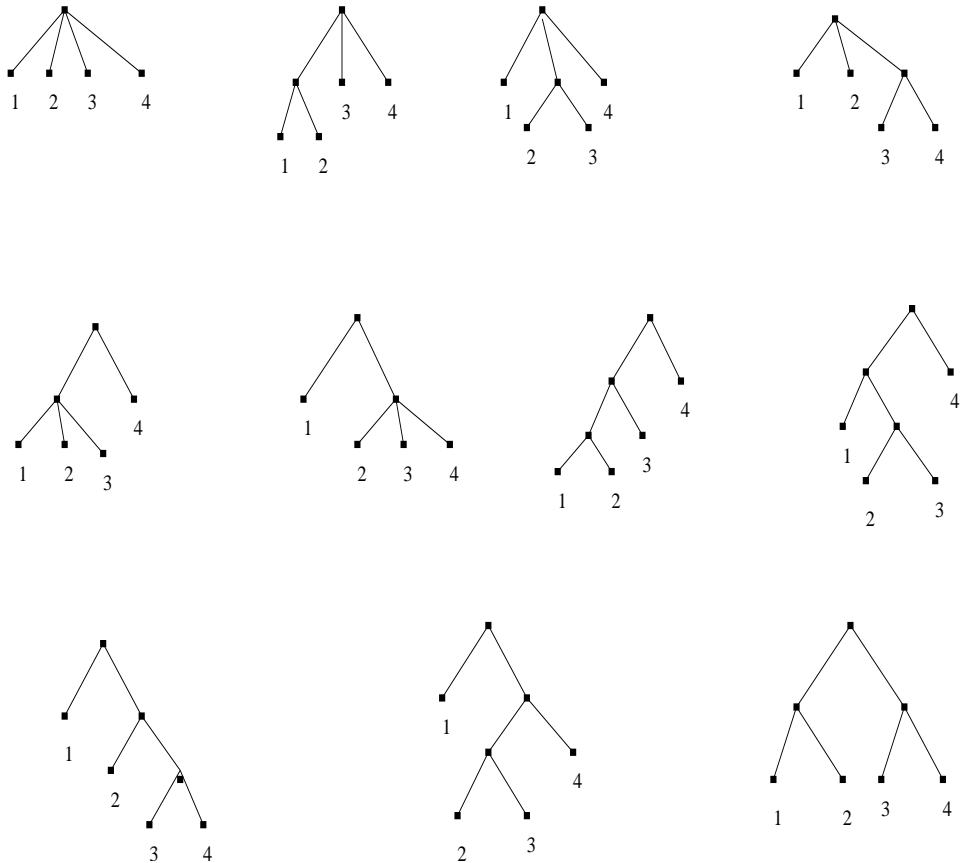, height=4.5in, width=5in}
\caption{The rooted trees of $T_4$.}
\label{fig5}
\end{figure}

\section{Models and the main reduction}

In this section we introduce some discrete model operators deeply related to our original 
operators $T_n$ in (\ref{multi-est}) and show that in order to prove our main
Theorem \ref{mainth} it is enough to prove it for these model operators.

We first need to recall certain definitions from earlier papers \cite{mtt-1} - \cite{mtt-11}.

\begin{definition}
Let $d\geq 1$. A tile is a rectangle $P = I_P \times \omega_P$ of area one where $I_P$ is a 
dyadic interval and $\omega_P$ is a shifted dyadic interval. A vector tile of dimension $d$ is
a $d$-tuple $P = (P_1,...,P_d)$ where each $P_i$ for $1\leq i\leq d$ is a tile and with
$I_{P_1} =...= I_{P_d} ( := I_P )$. We will sometimes refer to the tiles in the $i$ th
position as being $i$-tiles. The intervals $I_P$ are called the time intervals of the tiles
$P$ and the intervals $\omega_P$ are called the frequency intervals of the tiles $P$.
Similarly, the quasi-cube $\omega_{P}:= \omega_{P_1}\times ... \times \omega_{P_d}$
is called the frequency cube of the vector tile $P$.
\end{definition}

\begin{definition}
A set $\vec{\P}$ of vector tiles of dimension $d$ is said to be sparse if and only if the 
collection $\{\omega_{P} : P\in \vec{\P} \}$ of quasi-cubes is sparse.

\end{definition}

\begin{definition}
Let $P$ and $P'$ be tiles. We write $P' < P$ if $I_{P'}\subsetneqq I_P$ and
$3\omega_P \subseteq 3\omega_{P'}$, and $P'\leq P$ if $P' < P$ or $P'=P$. We also write
$P'\lesssim P$ if $I_{P'}\subseteq I_P$ and $C\omega_P \subseteq C\omega_{P'}$ where
$C>0$ is a large fixed constant. Finally, we write $P'\lesssim' P$ if
$P'\lesssim P$ but $P'\nleq P$.
\end{definition}

\begin{definition}
A collection $\vec{\P}$ of vector tiles of dimension $d$ is said to have rank $1$ if one has 
the following properties for all $P, P'\in \vec{\P}$:

(1) If $P\neq P'$ then $P_j\neq P'_j$ for all $1\leq j\leq d$.

(2) If $P'_j\leq P_j$ for some $1\leq j\leq d$, then $P'_i\lesssim P_i$ for all
$1\leq i\leq d$.

(3) If in addition to $P'_j\leq P_j$ one also assumes that $C|I_{P'}| < |I_{P}|$
(for a fixed large constant $C>0$) then we have $P'_i\lesssim' P_i$ for every $i\neq j$.
\end{definition}

Finally, we also recall

\begin{definition}
Let $P$ be a tile. A wave packet adapted to $P$ is a function $\Phi_P$ which has Fourier support
inside the interval $\frac{9}{10} \omega_P$ and satisfies the estimate

\begin{equation}
|\Phi_P(x)| \lesssim |I_P|^{-1/2} (1+ \frac{|x-c_{I_P}|}{|I_P|})^{-m}
\end{equation}
(where $c_{I_P}$ is the center of the interval $I_P$) for all $m>0$, with the implicit
constants depending on $m$.
\end{definition}

Thus, heuristically, $\Phi_P$ is $L^2$-normalized and supported in $P$.

Sometimes we will also use the notation $\widetilde{\chi}_I$ for the bump function defined by

\begin{equation}
\widetilde{\chi}_I(x) := (1 + \frac{\dist (x, I)}{|I|})^{- 10}.
\end{equation}

Having all these definitions at our disposal, we can start the description of our model
operators. They will be associated to arbitrary rooted trees $G\in \G_n$. We will define them
inductively, with respect to their height $h$.

Let $G$ be a rooted tree of height $1$. Then, a discrete model operator of type
$G$, is an $n$-linear operator of the form

\begin{equation}\label{discrete1}
T^G(f_1,...,f_n):= \int_0^1
\sum_{ P \in\vec{\P}_G}
\frac{1}{|I_P|^{\frac{n-1}{2}}}
\langle f_1, \Phi_{P_1}^{1, \alpha}\rangle...
\langle f_n, \Phi_{P_n}^{n, \alpha}\rangle
\Phi_{P_{n+1}}^{n+1, \alpha} d \alpha
\end{equation}
where $\vec{\P}_G$ is an arbitrary finite collection of rank $1$ of vector tiles of dimension
$n+1$ and $\Phi_{P_i}^{i, \alpha}$ are wave packets adapted to the tiles $P_i$ 
for $i=1,...,n+1$, uniformly for $\alpha \in [0, 1]$ .

Similarly, one defines model operators associated to rooted trees of height $1$ having an 
arbitrary number of leaves. Then, if $I$ is a dyadic interval, we define
$T^G_{|I|}$ to be the multi-linear operator defined by

\begin{equation}
T^G_{|I|}(f_1,...,f_n):= \int_0^1
\sum_{P\in\vec{\P}_G : |I_{\vec{P}}|\geq |I|   }
\frac{1}{|I_P|^{\frac{n-1}{2}}}
\langle f_1, \Phi_{P_1}^{1, \alpha}\rangle...
\langle f_n, \Phi_{P_n}^{n, \alpha}\rangle
\Phi_{P_{n+1}}^{n+1, \alpha} d \alpha .
\end{equation}
Suppose now that we know how to define model operators $T^G$ and $T^G_{|I|}$ associated
to rooted trees of height $h-1$ and an arbitrary number of leaves and we will describe
the definition of a model operator associated to a rooted tree of height $h$.

Fix $G$ of height $h$ having an arbitrary number of leaves $L$. Assume that the root of it
has precisely $\#$ sons. Denote by $G_1,...,G_{\#}$ the subtrees of $G$ whose roots
are all these sons. Clearly, all of these trees have height at most $h-1$ and 
by the induction hypothesis we know how to define the model operators
$T^{G_i}((f_l)_{l\in\I_i})$ and also $T^{G_i}_{|I|}((f_l)_{l\in\I_i})$ for 
$1\leq i\leq \#$. By $\I_i$ we simply denoted the set of indices corresponding
to the leaves which are descendents of the root of $G_i$. We also adopt the convention that
if a certain $G_j$ is a leave then $T^G = T^G_{|I|} = id$. Then, by a model operator
of type $G$ we mean an expression of the form

\begin{equation}\label{discreteh}
T^G(f_1,...,f_L):= \int_0^1
\sum_{P\in\vec{\P}_G}
\frac{1}{|I_P|^{\frac{\#-1}{2}}}
\langle T^{G_1}_{|I|}((f_l)_{l\in\I_1}), \Phi_{P_1}^{1, \alpha}\rangle...
\langle T^{G_{\#}}_{|I|}((f_l)_{l\in\I_{\#}}), \Phi_{P_{\#}}^{\#, \alpha}\rangle
\Phi_{P_{\#+1}}^{\#+1, \alpha} d \alpha
\end{equation}
where as before $\vec{\P}_G$ is a finite collection of rank $1$ of vector tiles of dimension
$\#+1$ and $\Phi_{P_i}^{i, \alpha}$ are wave packets adapted to $P_i$ for every $1\leq i\leq \#$,
 uniformly in $\alpha$. Finally, one defines $T^G_{|I|}$ similarly by

\begin{equation}
T^G_{|I|}(f_1,...,f_L):= \int_0^1
\sum_{P\in\vec{\P}_G : |I_P| \geq |I| }
\frac{1}{|I_P|^{\frac{\#-1}{2}}}
\langle T^{G_1}_{|I|}((f_l)_{l\in\I_1}), \Phi_{P_1}^{1, \alpha}\rangle...
\langle T^{G_{\#}}_{|I|}((f_l)_{l\in\I_{\#}}), \Phi_{P_{\#}}^{\#, \alpha}\rangle
\Phi_{P_{\#+1}}^{\#+1, \alpha} d \alpha
\end{equation}

We now claim that our main Theorem \ref{mainth} can be in fact reduced to the following

\begin{theorem}\label{mainthd}
Let $G\in\G_n$, let $\epsilon > 0$ be a small number and let
$\alpha_1,...,\alpha_n\in (\frac{1}{2}-\epsilon, \frac{1}{2} + \epsilon)$.
Then, for every measurable sets $F_1,...,F_n$ of finite measure, 
every functions $f_i$ having the property that $f_i\leq \chi_{F_i}$ for $1\leq i\leq n$,
and every $F$ with $|F|\sim 1$, 
there exists a subset $F'$ of $F$ with $|F'| \sim 1$ such that the following inequalities hold

\begin{equation}\label{restricted}
|\int_{\R} T^G(f_1,...,f_n)(x)\chi_{F'}(x) dx | \lesssim
|F_1|^{\alpha_1}...|F_n|^{\alpha_n}
\end{equation}
where the
implicit constants can be chosen to be independent on all the cardinalities of the finite sets
of vector-tiles which implicitly appear in the definition of $T^G$ and also independent
on the various wave packets considered. 
\end{theorem}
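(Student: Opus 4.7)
The plan is to prove Theorem \ref{mainthd} by induction on the height $h$ of the rooted tree $G$, dualizing and then running the time-frequency tree-selection machinery already used for the bi-linear Hilbert transform in \cite{laceyt1,laceyt2} and for the bi-est in \cite{mtt-4,mtt-5}, now adapted to the nested structure of (\ref{discreteh}).

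First I would dualize by introducing the $(n+1)$-linear form
\begin{equation*}
\form^G(f_1,\dots,f_n,f_{n+1}):=\int_{\R}T^G(f_1,\dots,f_n)(x)\,f_{n+1}(x)\,dx
\end{equation*}
and setting $f_{n+1}:=\chi_{F'}$. The exceptional set will be the standard one,
\begin{equation*}
E:=\bigcup_{i=1}^{n}\Big\{\,x\in\R : \M(\chi_{F_i})(x)>C|F_i|\,\Big\},
\end{equation*}
with $C$ chosen so large that $|E|\le\tfrac12|F|$, so that $F':=F\setminus E$ has $|F'|\sim 1$. The role of $E$ is that any vector tile $P\in\vec{\P}_G$ whose time interval contributes non-trivially to the form must satisfy $I_P\not\subseteq E$, which in turn yields the decay $\langle\chi_{F_i}\rangle_{I_P}\lesssim|F_i|$ needed to absorb the $|F_i|^{\alpha_i}$ factors.

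Second, for each leaf index $i$ I would attach the usual $\size_i$ and $\energy_i$ functionals to subcollections of $\vec{\P}_G$, defined via the wave-packet coefficients $\langle f_i,\Phi_{P_i}^{i,\alpha}\rangle$ for a genuine input $f_i$, or via $\langle T^{G_j}_{|I|}((f_l)_{l\in\I_j}),\Phi_{P_j}^{j,\alpha}\rangle$ for an ``internal'' leg produced by a subtree $G_j$ in the recursive formula (\ref{discreteh}). One then iteratively extracts time-frequency trees $\S_1,\S_2,\dots$ from $\vec{\P}_G$ at decreasing dyadic size levels, bounds the cardinality of the trees at each level by the corresponding energy via a Bessel/John--Nirenberg-type inequality, and estimates a single tree $\S$ by $\prod_{i}\size_i(\S)^{\theta_i}\energy_i(\S)^{1-\theta_i}$ for any exponents $\theta_i\in[0,1]$ with $\sum_i\theta_i=1$. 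Tuning the $\theta_i$ close to $\tfrac12$ and summing the resulting geometric series over size levels produces the desired $\prod_i|F_i|^{\alpha_i}$ bound.

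Third, the novelty compared with the $T_3$ case is the treatment of the internal legs: applied to each $G_j$, the inductive hypothesis yields restricted-weak-type estimates for $T^{G_j}_{|I|}$ localized to dyadic intervals, which let one treat $T^{G_j}_{|I|}((f_l)_{l\in\I_j})$ as a pseudo-function whose wave-packet coefficients against $\Phi_{P_j}^{j,\alpha}$ admit size and energy bounds of the correct shape. The auxiliary averaging in $\alpha\in[0,1]$ is precisely what decouples the internal and external frequency scales enough for the selection argument to close. The main obstacle, and the reason for the warning in the introduction about fundamentally new features for $n\ge 4$, is exactly this size/energy control of the internal legs once $h\ge 3$: a plain Bessel inequality is too weak to transfer the $\ell^2$-information from one level of nesting to the next while keeping the correct exponents on $|F_i|$. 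This is where the ``delicate Bessel'' Lemma of Section~6 enters, providing the needed gain in terms of the relative geometry of the frequency quasi-cubes; once that Lemma is in hand, the rest of the argument is careful bookkeeping along the $T_3$ blueprint of \cite{mtt-5}.
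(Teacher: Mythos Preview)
Your outline has the right high-level architecture (dualize, size/energy, recursive tree structure, invoke the delicate Bessel Lemma), but there are two genuine gaps that would prevent the argument from closing.

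\textbf{The exceptional set is too small.} Your set $E=\bigcup_i\{\M(\chi_{F_i})>C|F_i|\}$ only controls the \emph{leaves}. To estimate the size of an internal leg
$\langle T^{G_j}_{|I_P|}((f_l)_{l\in\I_j}),\Phi_{P_j}^{j,\alpha}\rangle$
you need pointwise control of $T^{G_j}((f_l)_{l\in\I_j})$ on the relevant time intervals, and the inductive hypothesis (a global restricted weak-type bound) does not give this. The paper therefore builds $\Omega_G$ \emph{inductively over the tree}: at each non-leaf vertex it also removes the sets
$\{\,\M(T_{G_i}^{(k_v)_v}((f_l)_{l\in\I_i}))>C(\prod_v 2^{k_v})\|T_{G_i}^{(k_v)_v}((f_l)_{l\in\I_i})\|_2\,\}$,
for all choices of the parameters $(k_v)$. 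Without this enlargement you cannot bound the size of the internal legs, so your ``pseudo-function'' heuristic does not go through.

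\textbf{The parameters $(k_v)_{v\in V_G}$ are missing.} The paper first decomposes every tile collection $\vec{\P}_{G_v}$ according to
$1+\dist(I_P,\Omega_{G_v}^c)/|I_P|\sim 2^{k_v}$,
proves the estimate for each fixed tuple $(k_v)$, and then sums. This decomposition is not cosmetic: it is exactly the hypothesis of Lemma~\ref{delicatebessel} (the sets $S_{\vec{\P}}=\Omega_G^c\subseteq\Omega_{G_1}^c=S_{\vec{\Q}}$ and the scales $k_u,k_{u_1}$), and the $2^{-Mk_{u_1}}$ gain from that Lemma in the regime $k_{u_1}\gg k_u$ is what makes the final geometric series over $(k_v)$ converge. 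Your description of the Lemma as providing a gain ``in terms of the relative geometry of the frequency quasi-cubes'' is off; the gain is spatial, coming from the distance of the time intervals to the nested exceptional sets. Relatedly, the $\alpha$-average plays no decoupling role---one simply takes the supremum over $\alpha$---and the exponent constraint in the size/energy interpolation (Proposition~\ref{tool}) is $\sum_{j=1}^{d}\theta_j=d-2$, not $\sum\theta_i=1$.
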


To see why this claim is true, fix $G\in \G_n$ and pick $m_G$ a multiplier of type $G$
as defined in the previous section. We will see in what follows that the corresponding
multi-linear operator $T_{m_G}$ can in fact be written as an weighted average of model operators
of type $T^G$. Since we will prove this inductively, let us first assume that $G$ is a rooted
tree of height $1$.

In particular, $m_G$ is of the form

$$m_G(\xi_1,...,\xi_n) =
\sum_{\n} C_{\n} \sum_{Q}
\Phi_{Q_1, \n, 1}(\xi_1)...
\Phi_{Q_n, \n, n}(\xi_n)
$$
where as usual $(C_{\n})_{\n}$ is a rapidly decreasing sequence and the inner summation runs over 
shifted dyadic quasi-cubes adapted to the region defined by the inequalities
$\xi_1<...<\xi_n$ and $|\xi_1-\xi_2| \sim ...\sim |\xi_{n-1} - \xi_n|$ in the sense
of Definition \ref{Ra}.

Fix $\n$ and consider only the inner sum. As before, it can be ``completed'' and rewritten
as

$$\sum_{Q}
\Phi_{Q_1, \n, 1}(\xi_1)...
\Phi_{Q_n, \n, n}(\xi_n)
\Phi_{Q_{n+1}, \n, n+1}(\xi_1+...+\xi_n)$$
for an appropriate choice of a wave packet $\Phi_{Q_{n+1}, \n, n+1}$. As a consequence, the expression

$$\int_{\R} T_{m_G} (f_1,...,f_n)(x) f_{n+1}(x) dx$$
becomes

\begin{equation}
\sum_{\n}C_{\n}
\sum_Q
\int_{\R^{n+1}}
\Phi_{Q_1,\n, 1}(\xi_1)...
\Phi_{Q_n,\n, n}(\xi_n)
\Phi_{Q_{n+1},\n, n+1}(\xi_1+...+\xi_n)\cdot
\end{equation}

$$
\cdot\widehat{f_1}(\xi_1)...
\widehat{f_n}(\xi_n) e^{2\pi i x(\xi_1+...+\xi_n)}
d \xi_1...
d \xi_n
f_{n+1}(x) 
d x =
$$

$$
\sum_{\n} C_{\n}
\sum_Q
\int_{\R^n}
\widehat{f_1}(\xi_1)\Phi_{Q_1,\n, 1}(\xi_1)
...
\widehat{f_n}(\xi_n)\Phi_{Q_n,\n, n}(\xi_n)\cdot
$$

$$
\cdot\Phi_{Q_{n+1},\n, n+1}(\xi_1+...+\xi_n)
\widehat{f_{n+1}}(-\xi_1-...-\xi_n) 
d \xi_1...
d \xi_n : = 
$$

$$
\sum_{\n }C_{\n}\sum_Q
\int_{\R^n}
\widehat{f_1}(\xi_1)\Phi_{Q_1,\n, 1}(\xi_1)
...
\widehat{f_n}(\xi_n)\Phi_{Q_n,\n, n}(\xi_n)\cdot
$$

$$
\cdot\widetilde{\Phi}_{Q_{n+1},\n, n+1}(-\xi_1-...-\xi_n)
\widehat{f_{n+1}}(-\xi_1-...-\xi_n) 
d \xi_1...
d \xi_n = 
$$

$$
\sum_{\n }C_{\n}
\sum_Q
\int_{\lambda_1+...+\lambda_{n+1} = 0}
(\widehat{f_1\ast \Phi^{\vee}_{Q_1,\n, 1}})(\lambda_1)...
(\widehat{f_n\ast \Phi^{\vee}_{Q_n,\n, n}})(\lambda_n)
(\widehat{f_{n+1}\ast \widetilde{\Phi}^{\vee}_{Q_{n+1},\n, n+1}})(\lambda_{n+1})
d \lambda =
$$

$$
\sum_{\n}C_{\n}\sum_Q
\int_{\R}
(f_1\ast \Phi_{Q_1,\n, 1}^{\vee})(x)...
(f_n\ast \Phi_{Q_n,\n, n}^{\vee})(x)
(f_{n+1}\ast \widetilde{\Phi}_{Q_{n+1},\n, n+1}^{\vee})(x) d x.
$$

Fix now $Q$ with $l(Q)\sim 2^k$ and look at the corresponding inner term in the previous expression. By making the change of variables
$x=2^{-k} y$ that term becomes

$$2^{-k}
\int_{\R}
(f_1\ast \Phi_{Q_1,\n, 1}^{\vee})(2^{-k} y )...
(f_n\ast \Phi_{Q_n,\n, n}^{\vee})(2^{-k} y )
(f_{n+1}\ast \widetilde{\Phi}_{Q_{n+1},\n, n+1}^{\vee})(2^{-k} y ) d y =
$$

$$2^{-k}\int_0^1
\sum_{l\in\Z}
(f_1\ast \Phi_{Q_1,\n, 1}^{\vee})(2^{-k} (l+ \alpha))...
(f_n\ast \Phi_{Q_n,\n, n}^{\vee})(2^{-k} (l + \alpha) )
(f_{n+1}\ast \widetilde{\Phi}_{Q_{n+1},\n, n+1}^{\vee})(2^{-k} (l + \alpha) ) d \alpha.
$$
Now, every generic term of the form $(f_j\ast \Phi_{j, \n, j}^{\vee})(2^{-k} ( l + \alpha ) )$ can be written as

$$
(f_j\ast \Phi_{j, \n, j}^{\vee})(2^{-k} ( l + \alpha ) ) =
\int_{\R}
f_j (y) \Phi_{j, \n, j}^{\vee} ( 2^{-k}l + 2^{-k}\alpha - y) d y :=
\int_{\R} f_j (y) \widetilde{\Phi_{j, \n, j}^{\vee} } ( y - 2^{-k}l - 2^{-k}\alpha ) d y =
$$

$$\int_{\R} f_j(y) 
\overline{\overline{\widetilde{\Phi_{j, \n, j}^{\vee} }} ( y - 2^{-k}l - 2^{-k}\alpha ) }
d y :=
2^{k/2}
\langle f_j, \Phi_{P_j}^{j, \n, \alpha} \rangle
$$
where $P_j$ is the tile $P_j:= 2^{-k}[l, l+1] \times Q_j$ for $j=1,...,n$ and $P_j:= 2^{-k}[l, l+1] \times (- Q_j)$
for $j= n + 1$. It is now clear that putting together all these calculations, $T_{m_G}$ can be written indeed as an weighted average of discrete model operators
of the type described before.

Assume now that this is true for every multiplier $m_G$ associated with trees of height $h-1$ and we want to demonstrate it for symbols $m_G$ associated to trees
of height $h$. Using (\ref{mG}) we can write any such a multiplier as

\begin{equation}
m_G = \sum_{\n}C_{\n}
\sum_Q
\sum_{k_1,...,k_{\#} << l(Q)}
m_{G_1, \n}^{k_1}((\xi_l)_{l\in\I_1})...
m_{G_{\#}, \n}^{k_{\#}}((\xi_l)_{l\in\I_{\#}})
\Phi_{Q_1, \n, 1}(\sum_{l\in\I_1}\xi_l)...
\Phi_{Q_{\#}, \n, \#}(\sum_{l\in\I_{\#}}\xi_l)
\end{equation}
where we implicitly assumed that $G$ has $\#$ sons. \footnote{ As usual, if $u$ is the root of $G$ and $u_1,..., u_{\#}$ are the sons of $u$, we denote
by $G_1,...,G_{\#}$ the subtrees of $G$ whose roots are all these sons. Then, for simplicity, we also denoted by $\I_j$ the previously defined sets of indices $\I_{u_j}$
for $j=1,...,\#$. }

Fix $\n$ and consider only the inner summation where we suppress for simplicity the dependence on $\n$. Fix also $Q$ so that $l(Q)\sim 2^k$ then fix
$k_1,..., k_{\#} << k$ and consider only the corresponding term determined by these fixed indices. As before, the $n+1$-linear form associated with the $n$-linear 
operator given by such a symbol is equal to (after the usual ``completion'')

$$\int_{\R}
\int_{\R^n}
m_{G_1}^{k_1}((\xi_l)_{l\in\I_1})...
m_{G_{\#}}^{k_{\#}}((\xi_l)_{l\in\I_{\#}})
\Phi_{Q_1, 1}(\sum_{l\in\I_1}\xi_l)...
\Phi_{Q_{\#},\#}(\sum_{l\in\I_{\#}}\xi_l)
\Phi_{Q_{\#+1}, \# + 1}(\sum_{l=1}^n \xi_l)\cdot
$$
$$
\cdot\widehat{f_1}(\xi_1)...
\widehat{f_n}(\xi_n) e^{2\pi i x(\xi_1+...+\xi_n)} d\xi_1...d\xi_n
f_{n+1}(x) d x =
$$

$$
\int_{\R^n}
m_{G_1}^{k_1}((\xi_l)_{l\in\I_1})...
m_{G_{\#}}^{k_{\#}}((\xi_l)_{l\in\I_{\#}})
\Phi_{Q_1, 1}(\sum_{l\in\I_1}\xi_l)...
\Phi_{Q_{\#}, \#}(\sum_{l\in\I_{\#}}\xi_l)
\Phi_{Q_{\#+1},\# + 1}(\sum_{l=1}^n \xi_l)\cdot
$$
$$
\cdot\widehat{f_1}(\xi_1)...
\widehat{f_n}(\xi_n) 
\widehat{f_{n+1}}(-\xi_1-...-\xi_n) d\xi_1... d\xi_n := 
$$

$$
\int_{\R^n}
m_{G_1}^{k_1}((\xi_l)_{l\in\I_1})...
m_{G_{\#}}^{k_{\#}}((\xi_l)_{l\in\I_{\#}})
\Phi_{Q_1, 1}(\sum_{l\in\I_1}\xi_l)...
\Phi_{Q_{\#}, \#}(\sum_{l\in\I_{\#}}\xi_l)
\widetilde{\Phi}_{Q_{\#+1}, \# + 1}(-\xi_1-...-\xi_n )\cdot
$$
$$
\cdot\widehat{f_1}(\xi_1)...
\widehat{f_n}(\xi_n) 
\widehat{f_{n+1}}(-\xi_1-...-\xi_n) d\xi_1... d\xi_n = 
$$

$$
\int_{\lambda_1 + ... + \lambda_{\#+1} = 0}
\widehat{T_{m_{G_1}^{k_1}}((f_l)_{l\in\I_1})}(\lambda_1)...
\widehat{T_{m_{G_{\#}}}^{k_{\#}}((f_l)_{l\in\I_{\#}})}(\lambda_{\#})\cdot
\Phi_{Q_1, 1}(\lambda_1)...
\Phi_{Q_{\#}, \#}(\lambda_{\#})\cdot
\widehat{f_{n+1}}(\lambda_{\#+1})\widetilde{\Phi}_{Q_{\#+1}, \#+1}(\lambda_{\#+1}) d\lambda =
$$

$$
\int_{\lambda_1 + ... + \lambda_{\#+1} = 0}
\widehat{(T_{m_{G_1}^{k_1}}((f_l)_{l\in\I_1})\ast \Phi^{\vee}_{Q_1, 1} )   }(\lambda_1)...
\widehat{(T_{m_{G_{\#}}^{k_{\#}}}((f_l)_{l\in\I_{\#}})\ast \Phi^{\vee}_{Q_{\#}, \#} )   }(\lambda_{\#})\cdot
\widehat{(f_{n+1} \ast \widetilde{\Phi}^{\vee}_{Q_{\#+1}, \#+1})}(\lambda_{\#+1}) d\lambda =
$$

$$
\int_{\R}
(T_{m_{G_1}^{k_1}}((f_l)_{l\in\I_1})\ast \Phi^{\vee}_{Q_1, 1} )(x)... 
(T_{m_{G_{\#}}^{k_{\#}}}((f_l)_{l\in\I_{\#}})\ast \Phi^{\vee}_{Q_{\#}, \#} )(x)\cdot
(f_{n+1} \ast \widetilde{\Phi}^{\vee}_{Q_{\#+1}, \#+1})(x) d x.
$$
At this point, one can discretize as usual once again in the $x$ variable, to obtain an average (over $\alpha$) of expressions of type

\begin{equation}
\sum_{\vec{P}}
\frac{1}{|I_{\vec{P}}|^{\frac{\#-1}{2}}}
\langle T_{m_{G_1}^{k_1}} ((f_l)_{l\in\I_1}), \Phi_{P_1}^{1,\alpha} \rangle ...
\langle T_{m_{G_{\#}}^{k_{\#}}} ((f_l)_{l\in\I_{\#}}), \Phi_{P_{\#}}^{\#,\alpha} \rangle \cdot
\langle f_{n+1}, \Phi_{P_{\#+1}}^{\#+1, \alpha} \rangle
\end{equation}
where the sum runs over vector tiles $\vec{P}$ so that $|\omega_{P_j}|\sim 2^k$ for every $j=1,...,\#+1$. Using now the induction hypothesis
and the fact that $k_1,...,k_{\#} << k$, it follows that indeed our operator $T_{m_G}$ can be written as an weighted average of discrete operators of the form
(\ref{discreteh}) as desired. 

More specifically, we have seen that every $T_{m_G}$ can be written as

$$T_{m_G} = \sum_{\n} D_{\n} T^G_{\n}$$
where $(D_{\n})_{\n}$ is a rapidly decreasing sequence indexed over a countable set, while
$T^G_{\n}$ is a discrete operator of the type (\ref{discreteh}). The only difference is that
in its case, the corresponding sum in (\ref{discreteh}) may be infinite. Using now
Theorem \ref{mainthd}, scaling invariance, the interpolation theory from \cite{mtt-1} and a 
standard limiting argument, it follows that each $T^G_{\n}$ is bounded
from $L^2\times ... \times L^2$ into $L^{2/n}$ with bounds which are independent on $\n$.
This shows that $T_{m_G}$ itself satisfies the same estimates, which proves our main
Theorem \ref{mainth}. It is therefore enough to prove Theorem \ref{mainthd} only.

\section{Prof of Theorem \ref{mainthd}}

First, we need to recall several definitions from some of our earlier work 
\cite{mtt-1} - \cite{mtt-8}.
We will also assume from now on that all our collections of vector-tiles are sparse.

\begin{definition}
Let $d\geq 3$ and $\vec{\P}$ be a collection of rank $1$ vector tiles of dimension $d$.
Let also $1\leq j\leq d$. A subcollection $T\subseteq \vec{\P}$ is said to be a
$j$-tree if and only if there exists a vector tile $P_T\in \vec{\P}$ such that

$$P_j \leq P_{T, j}$$
for all $P\in T$, where $P_{T, j}$ is the $j$ th component of $P_T$. The vector tile
$P_T$ is called the top of the tree. We write $I_T$ for $I_{P_T}$ and 
$\omega_{T, j}$ for $\omega_{P_{T, j}}$ respectively.
\end{definition}

Note also that a tree $T$ does not necessarily have to contain its top $P_T$.

\begin{definition}
Using the same notations in the previous definition, two trees $T$ and $T'$ are said to be
strongly $i$-disjoint ($1\leq i\leq d$) if and only if

(1) $P_i \neq P'_i$ for all $P\in T$ and $P'\in T'$.

(2) Whenever $P\in T$ and $P'\in T'$ are such that 
$2\omega_{P_i}\cap 2\omega_{P'_i}\neq \emptyset$ then one has 
$I_{P'}\cap I_T = \emptyset$ and similarly with $T$ and $T'$ reversed.
\end{definition}

Note also that if $T$ and $T'$ are strongly $i$-disjoint, then
$(I_P\times 2\omega_{P_i}) \cap (I_{P'}\times 2\omega_{P'_i}) = \emptyset$ for
all $P\in T$ and $P'\in T'$.

It is also important to point out that if $T$ is an $i$-tree, then for all
$P, P'\in T$ and $j\neq i$, either

$$\omega_{P_j} = \omega_{P'j}$$
or

$$2 \omega_{P_j} \cap 2 \omega_{P'j} = \emptyset.$$

It is now clear from the previous sections that in order to prove our main theorem
we need to be able to estimate generic expressions of the form

\begin{equation}\label{generic}
\sum_{P\in \vec{\P}}\frac{1}{|I_P|^{\frac{d-2}{2}}}
a^1_{P_1} ... a^d_{P_d}
\end{equation}
where $\vec{\P}$ is a finite collection of rank $1$ vector tiles of dimension $d$
and $(a^j_{P_j})_j$ are complex numbers of the form

$$a^j_{P_j} = 
\langle T^{G_j}_{|I_P|} ((f_l)_{l\in \I_j}), \Phi^j_{P_j} \rangle.
$$
The usual way to do this, is by using certain $sizes$ and $energies$ which are very
helpful to describe the local behaviour of expressions of type (\ref{generic}).
We recall first the following definition from \cite{mtt-1}.

\begin{definition}
Let $\vec{\P}$ be a rank $1$ collection of vector tiles of dimension $d$, $1\leq j\leq d$
and let also $(a^j_{P_j})_{P\in\vec{\P}}$ be a sequence of complex numbers. We define
the size of this sequence by

$$\size_j( (a^j_{P_j})_{P\in\vec{\P}}  ) :=
\sup_{T\subseteq \vec{\P}} 
(\frac{1}{|I_T|} \sum_{P\in T} |a^j_{P_j}|^2 )^{1/2}
$$
where $T$ ranges over all trees in $\vec{\P}$ which are $i$-trees for some $i\neq j$.
\end{definition}

The following John-Nirenberg type lemma is also very useful (see for instance
\cite{mtt-1} for a complete proof).

\begin{lemma}\label{jn}
Under the same hypothesis of the previous definition, one has

$$\size_j( (a^j_{P_j})_{P\in\vec{\P}}  )
\sim \sup_{T\in\vec{\P}}\frac{1}{|I_T|}
\| (\sum_{P\in T}\frac{|a^j_{P_j}|^2}{|I_P|}\chi_{I_P} )^{1/2} \|_{1, \infty}
$$
where again $T$ ranges over all trees in $\vec{\P}$ which are $i$-trees for some $i\neq j$.
\end{lemma}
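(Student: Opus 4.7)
Write $f_T:=\bigl(\sum_{P\in T}|a^j_{P_j}|^2|I_P|^{-1}\chi_{I_P}\bigr)^{1/2}$; this function is supported on $I_T$ and satisfies $\|f_T\|_2^2=\sum_{P\in T}|a^j_{P_j}|^2$. The inequality $\sup_T|I_T|^{-1}\|f_T\|_{1,\infty}\lesssim\size_j$ is immediate: by the trivial inclusion weak-$L^1\subseteq L^1$ and Cauchy--Schwarz,
\[
\|f_T\|_{1,\infty}\leq\|f_T\|_1\leq|I_T|^{1/2}\|f_T\|_2=|I_T|^{1/2}\Bigl(\sum_{P\in T}|a^j_{P_j}|^2\Bigr)^{1/2},
\]
and dividing by $|I_T|$ and taking the supremum over admissible trees gives the claim.

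For the nontrivial direction $\size_j\lesssim\mu$, where $\mu:=\sup_T|I_T|^{-1}\|f_T\|_{1,\infty}$, the plan is to prove the John--Nirenberg-type distribution estimate
\[
|\{x\in I_T:f_T(x)>Kn\mu\}|\leq 2^{-n}|I_T|\qquad(n\geq 1),
\]
for a fixed absolute constant $K$ and every $i$-tree $T$ (with $i\neq j$) in $\vec{\P}$. Integrating via the layer-cake identity $\|f_T\|_2^2=2\int_0^\infty\lambda|\{f_T>\lambda\}|\,d\lambda$ then yields $\sum_{P\in T}|a^j_{P_j}|^2\lesssim\mu^2|I_T|$, which is exactly $\size_j\lesssim\mu$.

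The distribution estimate is proved by induction on $n$. The base case is Chebyshev: $|\{f_T>K\mu\}|\leq \|f_T\|_{1,\infty}/(K\mu)\leq |I_T|/K$. For the inductive step one selects the maximal dyadic subintervals $J_k\subseteq I_T$ on which the partial square function associated with the restricted subtree $T|_{J_k}:=\{P\in T:I_P\subseteq J_k\}$ already exceeds the running threshold; one then applies the hypothesis on $\mu$ to each $T|_{J_k}$ and observes that tiles $P\in T$ whose time interval is not contained in any $J_k$ contribute pointwise at most $K\mu$ to $f_T$ on $I_T\setminus\bigcup_k J_k$, by maximality of the $J_k$. Thus the superlevel set at level $K(n+1)\mu$ is contained in a disjoint union of analogous superlevel sets one level lower, each rescaled in measure by a factor $K$, producing the desired geometric decay.

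The main obstacle is technical rather than conceptual: at each stopping stage one must realize the restricted subtree $T|_{J_k}$ as an admissible tree in the supremum defining $\mu$, with $|J_k|$ playing the role of $|I_T|$. This requires the rank-one collection $\vec{\P}$ to accommodate tops of the appropriate size and frequency structure aligned with $J_k$; the verification is standard and parallels the analogous $2$-parameter John--Nirenberg lemma from \cite{mtt-1}, to which we refer for the precise combinatorial bookkeeping.
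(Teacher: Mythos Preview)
Your outline is the standard John--Nirenberg bootstrapping argument and is essentially correct; the paper itself does not prove this lemma but simply cites \cite{mtt-1}, where exactly this stopping-time iteration is carried out. Two small corrections: the inclusion you invoke is $L^1\subseteq L^{1,\infty}$ (you wrote it backwards, though your inequality $\|f_T\|_{1,\infty}\le\|f_T\|_1$ is the right one), and the lemma in \cite{mtt-1} is not a ``$2$-parameter'' result---it is the one-parameter tile version of John--Nirenberg. Regarding the technical point you flag about $T|_{J_k}$ being an admissible tree: in practice one does not need a top in $\vec{\P}$ with time interval exactly $J_k$; one instead covers $T|_{J_k}$ by the subtrees with tops at the maximal tiles $P\in T$ having $I_P\subseteq J_k$, and since $\sum|I_P|\le|J_k|$ for these maximal tiles, the bookkeeping goes through with the same geometric decay.
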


The following lemma is also known (see for instance \cite{mtt-1} for a proof).

\begin{lemma}\label{sizeest}
Let $f$ be a measurable function. Then, one has

$$\size_j( (f, \Phi^j_{P_j})_{P\in\vec{\P}}  ) \lesssim \sup_{P\in\vec{\P}}
\frac{\int_{\R} |f| \widetilde{\chi}^M_{I_P}}{ |I_P|}
$$
for any positive real number $M$, where the implicit constant depends on $M$.
\end{lemma}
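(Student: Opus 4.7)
The plan is to bound, for a fixed $i$-tree $T\subseteq\vec{\P}$ with $i\neq j$ and top $P_T\in\vec{\P}$, the quantity $\bigl(\frac{1}{|I_T|}\sum_{P\in T}|\langle f,\Phi^j_{P_j}\rangle|^2\bigr)^{1/2}$ and then take the supremum over such trees. Setting $A:=\sup_{P'\in\vec{\P}}|I_{P'}|^{-1}\int_{\R}|f|\widetilde\chi^M_{I_{P'}}$ and noting that $I_T=I_{P_T}$ with $P_T\in\vec{\P}$, it suffices to establish
\begin{equation*}
\sum_{P\in T}|\langle f,\Phi^j_{P_j}\rangle|^2\;\lesssim\; A^2\,|I_T|.
\end{equation*}

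First I would apply $\ell^2$-duality to rewrite $(\sum_{P\in T}|\langle f,\Phi^j_{P_j}\rangle|^2)^{1/2}=\sup|\langle f,g\rangle|$, where the supremum runs over $g=\sum_{P\in T}c_P\Phi^j_{P_j}$ with $\sum_{P\in T}|c_P|^2\le 1$. The estimate is then reduced to proving the pointwise dispersion bound $|g(x)|\lesssim |I_T|^{-1/2}\widetilde\chi^{M'}_{I_T}(x)$ for some sufficiently large $M'$; once available, this gives $|\langle f,g\rangle|\le\int_{\R}|f||g|\lesssim |I_T|^{-1/2}\int_{\R}|f|\widetilde\chi^{M'}_{I_T}\le A\,|I_T|^{1/2}$, which upon squaring and dividing by $|I_T|$ is the required inequality.

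To obtain the pointwise bound on $g$, I would exploit the almost-orthogonality of the wave packets inside an $i$-tree with $i\neq j$. Group the tiles by their $j$-th frequency interval by setting $T_{\omega_0}:=\{P\in T:\omega_{P_j}=\omega_0\}$ and $g_{\omega_0}:=\sum_{P\in T_{\omega_0}}c_P\Phi^j_{P_j}$, so that $g=\sum_{\omega_0}g_{\omega_0}$. By the property recalled just before Lemma~\ref{sizeest} (for $P,P'\in T$ with $j\neq i$, either $\omega_{P_j}=\omega_{P'_j}$ or $2\omega_{P_j}\cap 2\omega_{P'_j}=\emptyset$), the Fourier supports $\tfrac{9}{10}\omega_0$ of the distinct $g_{\omega_0}$ are pairwise disjoint; moreover, within each $T_{\omega_0}$ the time intervals $I_P$ are disjoint dyadic intervals of the common length $|\omega_0|^{-1}$ contained in $I_T$. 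A Cauchy--Schwarz within each $\omega_0$-group combined with the standard wave-packet decay $|\Phi^j_{P_j}(x)|\lesssim|I_P|^{-1/2}\widetilde\chi^M_{I_P}(x)$ and the disjointness of the $I_P$ gives $|g_{\omega_0}(x)|\lesssim \|(c_P)_{P\in T_{\omega_0}}\|_{\ell^2}\,|\omega_0|^{1/2}\,\widetilde\chi^{M'}_{I_T}(x)$. A second Cauchy--Schwarz across $\omega_0$, together with the frequency-localization $\sum_{\omega_0}|\omega_0|\lesssim|\omega_{T,j}|=|I_T|^{-1}$ (a consequence of the rank-$1$ inclusion $C\omega_{T,j}\subseteq C\omega_{P_j}$ and the $2$-dilate disjointness), then produces the desired bound $|g(x)|\lesssim|I_T|^{-1/2}\widetilde\chi^{M'}_{I_T}(x)$.

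The main obstacle is carrying out those Cauchy--Schwarz steps so as to retain the correct Schwartz-tail factor $\widetilde\chi^{M'}_{I_T}(x)$ for $x\notin I_T$ rather than merely control on $I_T$: one has to sum the local tails $\widetilde\chi^M_{I_P}(x)$ over $P\in T_{\omega_0}$ using the dyadic disjointness of the $I_P$ inside $I_T$ and verify that the collective decay degrades only to a tail at the scale of $I_T$ (with a slightly smaller exponent, absorbed by the freedom to choose $M$ large). Once the pointwise estimate on $g$ is in place, plugging it back through the duality and taking the supremum over $i$-trees $T$ identifies the top $P_T$ with an element of $\vec{\P}$ and yields the claim.
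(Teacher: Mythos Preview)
Your duality reduction is fine, but the pointwise bound $|g(x)|\lesssim |I_T|^{-1/2}\widetilde\chi^{M'}_{I_T}(x)$ that you aim for is simply false, and the Cauchy--Schwarz step across the frequency groups is where the argument breaks. Concretely, take $T$ to consist of a single tile $P$ with $|I_P|\ll |I_T|$ (this is allowed in an $i$-tree), and let $c_P=1$. Then $g=\Phi^j_{P_j}$ has $\|g\|_\infty\sim |I_P|^{-1/2}$, which can be arbitrarily large compared with $|I_T|^{-1/2}$. The error in your computation is the claim $\sum_{\omega_0}|\omega_0|\lesssim|\omega_{T,j}|=|I_T|^{-1}$: by the rank-$1$ property every $\omega_0=\omega_{P_j}$ satisfies $C\omega_{T,j}\subseteq C\omega_0$, so all the $\omega_0$'s \emph{contain} (a dilate of) $\omega_{T,j}$ and have $|\omega_0|\ge |\omega_{T,j}|$. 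Being $2$-dilate disjoint they sit at distinct dyadic scales $\gtrsim|\omega_{T,j}|$, and $\sum_{\omega_0}|\omega_0|$ is a geometric sum dominated by its \emph{largest} term, not by $|\omega_{T,j}|$.

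The paper does not give a proof here (it cites \cite{mtt-1}), but the standard route uses the John--Nirenberg characterization in Lemma~\ref{jn} rather than an $L^\infty$ bound on $g$. After modulating by $e^{-2\pi i \xi_{T,j}x}$ (with $\xi_{T,j}$ the center of $\omega_{T,j}$), the tree square function $\bigl(\sum_{P\in T}|\langle f,\Phi^j_{P_j}\rangle|^2|I_P|^{-1}\chi_{I_P}\bigr)^{1/2}$ becomes a genuine Littlewood--Paley square function (the $\omega_{P_j}$ are lacunary away from the origin), hence maps $L^1\to L^{1,\infty}$ by Calder\'on--Zygmund theory. Splitting $f=f\chi_{3I_T}+f\chi_{(3I_T)^c}$, the near part is handled by this weak $(1,1)$ bound and $\|f\chi_{3I_T}\|_1\lesssim\int|f|\widetilde\chi^M_{I_T}$, while the far part is controlled termwise by the decay of the wave packets. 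Combined with Lemma~\ref{jn} this yields the stated estimate. Your duality idea could be salvaged if instead of an $L^\infty$ bound on $g$ you prove the $L^1$ bound $\|g\|_{L^1(\widetilde\chi^{-M}_{I_T}dx)}\lesssim |I_T|^{1/2}$ together with the hypothesis $|f|\le A$ pointwise --- but the lemma only assumes $L^1$-type control on $f$, so that route does not match the statement either.
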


Let us also recall the following definition from \cite{mtt-5}.

\begin{definition}
Using the same notations as before, one defines the energy of the sequence
$(a^j_{P_j})_{P\in\vec{\P}}$ by

$$\energy_j((a^j_{P_j})_{P\in\vec{\P}}  ) :=
\sup_{\n\in\Z}\sup_{\T} 2^{\n} (\sum_{T\in\T} |I_T| )^{1/2}
$$
where $\T$ ranges over all collections of strongly $j$-disjoint trees
in $\vec{\P}$ ( which are $i$-trees for some $i\neq j$ )
such that

$$(\frac{1}{|I_T|} \sum_{P\in T} |a^j_{P_j}|^2 )^{1/2}\geq 2^{\n}$$
for all $T\in\T$ and also satisfying

$$(\frac{1}{|I_{T'}|} \sum_{P\in T'} |a^j_{P_j}|^2 )^{1/2}\leq 2^{\n+1}$$
for all sub-trees $T'\subseteq T\in\T$.
\end{definition}

It is also not difficult to observe the following lemma \cite{mtt-5}.

\begin{lemma}\label{CP}
For any sequence $(a^j_{P_j})_{P\in\vec{\P}}$ there exists a collection
$\T$ of strongly $j$-disjoint trees (which are $i$-trees for some $i\neq j$) and
complex numbers $c^j_{P_j}$ for all $P\in \cup_{T\in\T} T$ such that

$$\energy_j((a^j_{P_j})_{P\in\vec{\P}}  ) \sim
\sum_{T\in T}\sum_{P\in T} a^j_{P_j}\overline{c^j_{P_j}}
$$
and such that 

$$\sum_{P\in T'}
|c^j_{P_j} |^2 \lesssim \frac{|I_{T'}|}{\sum_{T\in\T} |I_T|}
$$
for all $T\in \T$ and all subtrees $T'\subseteq T$.
\end{lemma}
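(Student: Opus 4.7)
The plan is to read the required dual coefficients $c^j_{P_j}$ directly off a near-maximizer of the supremum defining $\energy_j$. First, I would fix an integer $\n_0 \in \Z$ and a collection $\T$ of strongly $j$-disjoint trees (each an $i$-tree for some $i \neq j$) such that
\[
2^{\n_0}\Bigl(\sum_{T\in\T}|I_T|\Bigr)^{1/2} \;\geq\; \tfrac{1}{2}\,\energy_j\bigl((a^j_{P_j})_{P\in\vec{\P}}\bigr),
\]
with each $T \in \T$ satisfying the built-in lower bound $\sum_{P\in T}|a^j_{P_j}|^2 \geq 2^{2\n_0}|I_T|$ and the upper bound $\sum_{P\in T'}|a^j_{P_j}|^2 \leq 4\cdot 2^{2\n_0}|I_{T'}|$ for every subtree $T' \subseteq T$; both bounds are part of the definition of energy.

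Then I would define the dual coefficients by a single global rescaling
\[
c^j_{P_j} \;:=\; \frac{a^j_{P_j}}{2^{\n_0}\,\bigl(\sum_{T\in\T}|I_T|\bigr)^{1/2}}
\qquad\text{for every } P \in \bigcup_{T\in\T} T.
\]
The localized $\ell^2$ estimate follows immediately from the upper bound per subtree,
\[
\sum_{P\in T'}|c^j_{P_j}|^2 \;=\; \frac{\sum_{P\in T'}|a^j_{P_j}|^2}{2^{2\n_0}\,\sum_{T\in\T}|I_T|} \;\leq\; \frac{4\,|I_{T'}|}{\sum_{T\in\T}|I_T|},
\]
while the pairing is controlled below by the lower bound per whole tree,
\[
\sum_{T\in\T}\sum_{P\in T} a^j_{P_j}\,\overline{c^j_{P_j}} \;=\; \frac{\sum_{T\in\T}\sum_{P\in T}|a^j_{P_j}|^2}{2^{\n_0}\,\bigl(\sum_{T\in\T}|I_T|\bigr)^{1/2}} \;\geq\; 2^{\n_0}\Bigl(\sum_{T\in\T}|I_T|\Bigr)^{1/2} \;\gtrsim\; \energy_j.
\]
The matching reverse inequality $\sum\sum a^j_{P_j}\overline{c^j_{P_j}} \lesssim \energy_j$ drops out of Cauchy--Schwarz combined with the subtree bound applied at $T'=T$.

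I do not anticipate any serious obstacle. The only delicate point is that the supremum defining $\energy_j$ need not be attained, so one must work with a near-maximizing pair $(\n_0,\T)$ and absorb the factor of $2$ into the implicit constants; everything else is a direct algebraic verification. The essential observation is that the normalization factor $2^{\n_0}(\sum_{T}|I_T|)^{1/2}$ is chosen so as to simultaneously convert the per-tree lower bound into a pairing comparable to $\energy_j$ and the per-subtree upper bound into the desired $\ell^2$ control on the $c^j_{P_j}$.
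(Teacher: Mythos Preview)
Your proposal is correct and complete. The paper itself does not supply a proof of this lemma, merely stating that it is ``not difficult to observe'' and citing \cite{mtt-5}; your argument is precisely the natural one---select a near-extremizing pair $(\n_0,\T)$ for the energy supremum and normalize $c^j_{P_j} := a^j_{P_j}\big/\bigl(2^{\n_0}(\sum_T|I_T|)^{1/2}\bigr)$, after which both conclusions follow directly from the two inequalities built into the definition of $\energy_j$.
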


The following lemma is also well known (see for instance \cite{mtt-5}).

\begin{lemma}\label{energyest}
For any $f\in L^2(\R)$ one has

\begin{equation}
\energy_j( \langle f, \Phi^j_{P_j} \rangle _{P\in\vec{\P}} )
\lesssim \|f \|_2.
\end{equation}

\end{lemma}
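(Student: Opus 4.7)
The plan is to reduce the energy estimate to an $L^2$ Bessel-type bound via duality, using Lemma \ref{CP}. Given $f \in L^2$, let $\T$ be a near-extremal collection of strongly $j$-disjoint trees and $2^n$ the associated level, so that
\[
\energy_j(\langle f, \Phi^j_{P_j}\rangle_{P\in\vec{\P}}) \sim 2^n \Bigl(\sum_{T\in\T} |I_T|\Bigr)^{1/2}.
\]
By Lemma \ref{CP} applied to the sequence $a^j_{P_j} = \langle f, \Phi^j_{P_j}\rangle$, there exist complex numbers $(c^j_{P_j})_{P \in \cup_T T}$ with the normalization $\sum_{P \in T'} |c^j_{P_j}|^2 \lesssim |I_{T'}|/\sum_T |I_T|$ for every $T \in \T$ and every subtree $T' \subseteq T$, such that
\[
\energy_j(\langle f, \Phi^j_{P_j}\rangle_{P\in\vec{\P}}) \sim \sum_{T\in\T}\sum_{P\in T} \langle f, \Phi^j_{P_j}\rangle \overline{c^j_{P_j}} = \Bigl\langle f, \Psi\Bigr\rangle,
\qquad \Psi := \sum_{T\in\T}\sum_{P\in T} c^j_{P_j}\Phi^j_{P_j}.
\]
By Cauchy--Schwarz it then suffices to show $\|\Psi\|_2 \lesssim 1$.

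To bound $\|\Psi\|_2^2$, I would expand the square and split into diagonal ($T=T'$) and off-diagonal ($T\neq T'$) contributions. For the diagonal terms, I would use that $T$ is an $i$-tree for some $i\neq j$, so the $j$-th tiles of $P\in T$ have frequency intervals $\omega_{P_j}$ that are either equal or $2$-disjoint (a consequence of the rank $1$ property). Almost orthogonality of the wave packets $\Phi^j_{P_j}$ together with the Carleson-type packing bound $\sum_{P\in T}|c^j_{P_j}|^2 \lesssim |I_T|/\sum_{T''}|I_{T''}|$ then gives a diagonal contribution of size $\sum_T |I_T|/\sum_{T''}|I_{T''}| \lesssim 1$.

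The off-diagonal piece is where the strong $j$-disjointness of $\T$ is essential: for $P \in T$ and $P' \in T'$ with $T \neq T'$, either $2\omega_{P_j}\cap 2\omega_{P'_j}=\emptyset$ (in which case the frequency supports of $\Phi^j_{P_j}$ and $\Phi^j_{P'_j}$ inside $\tfrac{9}{10}\omega_{P_j}$, $\tfrac{9}{10}\omega_{P'_j}$ are disjoint and the inner product vanishes), or else the time interval $I_P$ is disjoint from the top $I_{T'}$ and symmetrically. In the latter case I would exploit the Schwartz decay of the wave packets together with the spatial separation to estimate each cross-pairing by a rapidly decaying factor of $\widetilde{\chi}_{I_P}(c_{I_{P'}})$-type, and then sum using Cauchy--Schwarz in $(T,T')$ together with the packing bounds on the $c^j_{P_j}$.

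The main obstacle is precisely this off-diagonal Bessel estimate: handling the interaction of wave packets belonging to distinct strongly $j$-disjoint trees and showing the total contribution is controlled by $\sum_T |I_T|/\sum_{T''}|I_{T''}| \lesssim 1$. This is exactly the ``delicate Bessel'' step that the authors announce will be proved in Section 6, and I would appeal to it (or reproduce its TT$^*$-style argument) to close the estimate.
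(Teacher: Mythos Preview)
The paper does not give its own proof of this lemma; it simply cites \cite{mtt-5}.  Your approach --- dualize via Lemma~\ref{CP}, apply Cauchy--Schwarz, and reduce to $\|\Psi\|_2\lesssim 1$ --- is correct and is exactly the standard argument used in \cite{mtt-5}.

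There is, however, a misattribution in your last paragraph.  The off-diagonal estimate you need is the basic Bessel bound $\bigl\|\sum_{T\in\T}\sum_{P\in T} c^j_{P_j}\Phi^j_{P_j}\bigr\|_2\lesssim 1$ for a \emph{single} family of strongly $j$-disjoint trees.  This is precisely the result the authors quote from \cite{mtt-5} inside Section~6 (``this is $O(1)$ by a result from \cite{mtt-5}''), and it is essentially Lemma~\ref{bessel} applied with $\vec{\P}=\vec{\Q}$ and the same index on both sides.  It is \emph{not} the ``delicate Bessel'' Lemma~\ref{delicatebessel}: that lemma concerns the interaction of two \emph{different} tile collections $\vec{\P}$ and $\vec{\Q}$ subject to specific distance hypotheses relative to nested sets $S_{\vec{\P}}\subseteq S_{\vec{\Q}}$, and its point is to gain an additional decay factor $2^{-Mk_2}$.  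Its proof in Section~6 actually \emph{uses} the basic $O(1)$ Bessel bound as an input, so invoking Lemma~\ref{delicatebessel} here would be both overkill and logically backwards.  Replace your reference to Section~6 with a reference to Lemma~\ref{bessel} (or directly to \cite{mtt-5}) and your sketch is complete.
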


The following lemma will also play an important role when estimating the 
energies of various
general sequences, later on. For a proof of it see \cite{mtt-5}.

\begin{lemma}\label{bessel}
Let $d_1, d_2\geq 3$ and $\vec{\P}, \vec{\Q}$ be rank $1$ collections of vector tiles
of dimensions $d_1$ and $d_2$ respectively. Let also $1\leq i\leq d_1$ and
$1\leq j\leq d_2$. Consider also two sequences of complex numbers 
$(c^i_{P_i})_P$ and $(c^j_{Q_j})_Q$ where $P$ runs inside a collection of strongly
$i$-disjoint trees which are $l$-trees for some $l\neq i$ and $Q$ runs inside a collection of
strongly $j$-disjoint trees which are $l$-trees for some $l\neq j$. Assume also that both of 
these sequences satisfy the conclusion of the previous Lemma \ref{CP}. Then, one has

\begin{equation}\label{ineqq}
\left|
\sum_{P, Q : |I_P| \leq |I_Q|}
c^i_{P_i}c^j_{Q_j} 
\langle \Phi^i_{P_i}, \Phi^j_{P_j} \rangle
\right| \lesssim 1.
\end{equation}

\end{lemma}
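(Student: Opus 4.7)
The plan is to establish this Bessel-type inequality via a refined almost-orthogonality argument that combines three ingredients: the Carleson-type normalization of Lemma \ref{CP}, which after summing the per-subtree bounds gives $\sum_P|c^i_{P_i}|^2\lesssim 1$ and $\sum_Q|c^j_{Q_j}|^2\lesssim 1$; the Fourier support of the wave packets; and the strong disjointness of the two tree families. The strategy is to dyadically decompose according to the scale ratio, writing the sum as $\sum_{k\ge 0}S_k$ where $S_k$ collects those pairs with $|I_Q|/|I_P|=2^k$ (equivalently $|\omega_{P_i}|=2^k|\omega_{Q_j}|$), show that $|S_k|\lesssim 2^{-k/2}$ for each $k$, and then sum the geometric series.

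Since $\widehat{\Phi^i_{P_i}}$ is supported in $\frac{9}{10}\omega_{P_i}$ and $\widehat{\Phi^j_{Q_j}}$ in $\frac{9}{10}\omega_{Q_j}$, the inner product $\langle \Phi^i_{P_i},\Phi^j_{Q_j}\rangle$ vanishes unless $\omega_{Q_j}\subset C\omega_{P_i}$; when nonzero, standard wave packet estimates yield $|\langle \Phi^i_{P_i},\Phi^j_{Q_j}\rangle|\lesssim 2^{-k/2}\widetilde{\chi}^M_{I_Q}(c_{I_P})$ for arbitrarily large $M$. Applying Cauchy-Schwarz in $P$ and using $\sum_P|c^i_{P_i}|^2\lesssim 1$ reduces $|S_k|\lesssim 2^{-k/2}$ to proving
$$\sum_P\Bigl|\sum_{Q}c^j_{Q_j}\langle\Phi^i_{P_i},\Phi^j_{Q_j}\rangle\Bigr|^2\lesssim 2^{-k},$$
where the inner $Q$-sum is restricted to tiles of matching scale and frequency $\omega_{Q_j}\subset C\omega_{P_i}$.

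Expanding the square and swapping the order of summation reduces matters to controlling the quadratic form $\sum_{Q,Q'}c^j_{Q_j}\overline{c^j_{Q'_j}}K(Q,Q')$, where $K(Q,Q')=\sum_P\langle\Phi^i_{P_i},\Phi^j_{Q_j}\rangle\overline{\langle\Phi^i_{P_i},\Phi^j_{Q'_j}\rangle}$ is supported on pairs of $Q$-tiles of common scale whose frequencies both lie inside a common $C\omega_{P_i}$. For the diagonal $Q=Q'$, rank $1$ and strong $i$-disjointness of the $P$-family leave only $O(1)$ admissible frequency positions for $P_i$, so a direct $\ell^1$ summation in time yields $K(Q,Q)\lesssim 2^{-k}$, and the diagonal contribution is $\lesssim 2^{-k}\sum_Q|c^j_{Q_j}|^2\lesssim 2^{-k}$. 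The main obstacle is the off-diagonal term: when $Q\ne Q'$ and $2\omega_{Q_j}\cap 2\omega_{Q'_j}\neq\emptyset$, strong $j$-disjointness forces either $Q,Q'$ to lie in the same tree (in which case rank $1$ separates their time intervals) or the time interval of one to be disjoint from the top of the other tree. Either way one gains a rapidly decaying factor $\widetilde{\chi}^M_{I_Q}(c_{I_{Q'}})$ in $|K(Q,Q')|$, so that summing over $Q'$ at fixed $Q$ --- after pigeonholing on the strongly $j$-disjoint tree tops that contain $Q'$ and invoking the per-tree Carleson bound from Lemma \ref{CP} --- yields the same $O(2^{-k})$ bound as the diagonal. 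Combining the two pieces gives $|S_k|\lesssim 2^{-k/2}$, and summing the geometric series in $k\ge 0$ finishes the proof.
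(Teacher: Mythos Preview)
Your approach has a genuine gap at the diagonal estimate. You claim $K(Q,Q)=\sum_P|\langle\Phi^i_{P_i},\Phi^j_{Q_j}\rangle|^2\lesssim 2^{-k}$ on the grounds that rank~$1$ and strong $i$-disjointness leave only $O(1)$ admissible tiles $P$. That is not so: there are $O(1)$ admissible \emph{frequency} positions $\omega_{P_i}$, but for each such position there may be as many as $\sim 2^k$ pairwise disjoint time intervals $I_P$ of length $2^{-k}|I_Q|$ inside a fixed dilate of $I_Q$, and nothing in your argument excludes them --- you have already spent the coefficients $c^i_{P_i}$ in the Cauchy--Schwarz step, so the Carleson normalization of the $P$-family is no longer available to limit how many such $I_P$ appear. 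With the wave-packet bound $|\langle\Phi^i_{P_i},\Phi^j_{Q_j}\rangle|\lesssim 2^{-k/2}\,\widetilde\chi^M_{I_Q}(c_{I_P})$ one only obtains
\[
K(Q,Q)\ \lesssim\ 2^{-k}\sum_{I_P}\widetilde\chi^{2M}_{I_Q}(c_{I_P})\ \lesssim\ 2^{-k}\cdot 2^k\ =\ O(1),
\]
so the diagonal contribution to the quadratic form is $\sum_Q|c^j_{Q_j}|^2\lesssim 1$, not $2^{-k}$. This yields only $|S_k|\lesssim 1$, and the series $\sum_{k\ge 0}|S_k|$ diverges. The off-diagonal piece suffers from the same overcount.

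The paper does not prove this lemma here, deferring to \cite{mtt-5}; the closely related argument for Lemma~\ref{delicatebessel} in Section~6 shows the intended mechanism, which is quite different from yours. One does not seek a geometric gain in the scale ratio. Instead one keeps the tree structure of \emph{both} families in play throughout: one uses the pointwise bound $|c^i_{P_i}|\lesssim(|I_P|/S)^{1/2}$ from Lemma~\ref{CP}, the identity
\[
\sum_{|I_P|\le |I_Q|}
=\Bigl\langle\sum_Pc^i_{P_i}\Phi^i_{P_i},\ \sum_Qc^j_{Q_j}\Phi^j_{Q_j}\Bigr\rangle
-\sum_{|I_P|>|I_Q|},
\]
the $L^2$ bound $\|\sum_Pc^i_{P_i}\Phi^i_{P_i}\|_2\lesssim 1$ (and likewise for $Q$) to control the full pairing, and a tree-by-tree estimate of the reversed-scale sum over $T\in\T$ via the per-subtree Carleson bound. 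The essential point is that the Carleson structure of both families must be used simultaneously; performing Cauchy--Schwarz in $P$ at the outset discards that of the $P$-family, and no almost-orthogonality among the $Q$'s can recover the missing $2^{-k}$.
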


In addition to the above lemma, we need also the following result, which will play a crucial role later on.

\begin{lemma}\label{delicatebessel}
Assume that the sequences of complex numbers $(c^i_{P_i})_P$ and $(c^j_{Q_j})_Q$ are precisely
as in the previous Lemma \ref{bessel}. 
Assume in addition that there are two subsets $S_{\vec{\P}}$ and
$S_{\vec{\Q}}$ of the real line, so that $S_{\vec{\P}}\subseteq S_{\vec{\Q}}$ and so that
every $P$ satisfies

$$\frac{\dist(I_P, S_{\vec{\P}})}{|I_P|} \sim 2^{k_1}$$
and every $Q$ satisfies

$$\frac{\dist(I_Q, S_{\vec{\Q}})}{|I_Q|} \sim 2^{k_2}$$
for two fixed numbers $k_1, k_2$ so that $k_2 >> k_1$.

Then, the corresponding estimate for the left hand side of (\ref{ineqq}) can be improved to

\begin{equation}\label{ineqq1}
\left|
\sum_{P, Q : |I_P| \leq |I_Q|}
c^i_{P_i}c^j_{Q_j} 
\langle \Phi^i_{P_i}, \Phi^j_{P_j} \rangle
\right| \lesssim 2^{-M k_2}
\end{equation}
for any positive constant $M$, where the implicit constant depends on $M$.

\end{lemma}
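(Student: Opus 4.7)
The plan is to leverage the geometric separation assumption to extract a factor $2^{-Mk_2}$ from each pointwise wave-packet correlation $\langle \Phi^i_{P_i}, \Phi^j_{Q_j}\rangle$, and then feed this improved kernel bound into exactly the same scheme used to prove the standard Bessel estimate of Lemma \ref{bessel}.

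First I would establish the purely geometric consequence
$$\dist(I_P, I_Q) \gtrsim 2^{k_2}|I_Q| \quad \text{whenever } |I_P|\leq|I_Q|.$$
Indeed, pick any $y\in I_P$ and $z\in I_Q$. By the hypothesis on $P$ there is some $s\in S_{\vec{\P}}\subseteq S_{\vec{\Q}}$ with $|y-s|\lesssim 2^{k_1}|I_P|$, while the hypothesis on $Q$ gives $|z-s|\geq \dist(I_Q, S_{\vec{\Q}})\sim 2^{k_2}|I_Q|$. The triangle inequality then yields $|y-z|\geq |z-s|-|y-s|\gtrsim 2^{k_2}|I_Q|-2^{k_1}|I_P|\sim 2^{k_2}|I_Q|$, since $|I_P|\leq|I_Q|$ and $k_2\gg k_1$.

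Next, I would combine this separation with the (to any order) rapid decay of the wave packets to upgrade the pointwise correlation estimate. For every $x\in\R$ one has $|x-c_{I_P}|+|x-c_{I_Q}|\geq \dist(I_P,I_Q)\gtrsim 2^{k_2}|I_Q|\geq 2^{k_2}|I_P|$, so at least one of $|x-c_{I_P}|/|I_P|$ and $|x-c_{I_Q}|/|I_Q|$ is $\gtrsim 2^{k_2}$. Splitting the integral $\int|\Phi^i_{P_i}(x)||\Phi^j_{Q_j}(x)|\,dx$ according to this dichotomy, and using the Schwartz decay of the wave packets to any polynomial order strictly larger than $M$, one obtains for any $M_0>0$
$$\left|\langle \Phi^i_{P_i}, \Phi^j_{Q_j}\rangle\right| \lesssim 2^{-Mk_2}\cdot \frac{|I_P|^{1/2}}{|I_Q|^{1/2}}\left(1+\frac{\dist(I_P,I_Q)}{|I_Q|}\right)^{-M_0}.$$
The second factor is precisely the pointwise bound used in the proof of Lemma \ref{bessel}, while $2^{-Mk_2}$ is a fixed uniform gain that does not depend on $(P,Q)$.

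Finally, I would run the argument of Lemma \ref{bessel} with this improved kernel. That argument proceeds by a Schur/TT*-type manipulation whose input is exactly the pointwise bound on $|\langle \Phi^i_{P_i}, \Phi^j_{Q_j}\rangle|$, the strong $i$- and $j$-disjointness of the trees, and the normalization of the coefficient sequences supplied by Lemma \ref{CP}; because the factor $2^{-Mk_2}$ is independent of $P$ and $Q$ it pulls straight outside the sum, delivering (\ref{ineqq1}). The only point requiring care is that after paying the $2^{-Mk_2}$ gain there must be enough Schwartz decay left over to execute the summation step: this is automatic, since the wave packets decay faster than any polynomial, so one fixes a sufficiently large decay order $M+M_0$ at the outset, spends $Mk_2$ of it on the geometric improvement, and keeps $M_0$ large enough to reproduce the original Bessel bound of $\lesssim 1$.
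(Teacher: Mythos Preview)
Your geometric separation (step 1) and the pointwise kernel improvement (step 2) are both correct, and in fact the paper opens its proof of Lemma \ref{delicatebessel} with exactly this observation (it records $2^{k_2-5}I_Q\cap I_P=\emptyset$ and then runs the absolute-value estimate you have in mind). The problem is step 3.

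The proof of Lemma \ref{bessel} is \emph{not} a pure Schur/absolute-value argument driven solely by the pointwise bound on $|\langle\Phi_P,\Phi_Q\rangle|$; it relies on genuine $L^2$ almost-orthogonality (Bessel-type inequalities such as $\sum_{Q\in T'}|\langle g,\Phi_Q\rangle|^2\lesssim\|g\|_2^2$ for the wave packets along a tree, and estimates of the form $\|\sum C_P\Phi_P\|_2\lesssim 1$). Those steps use the actual inner-product structure of the $\Phi$'s, not merely the size of $|\langle\Phi_P,\Phi_Q\rangle|$, and a uniform factor sitting inside the kernel does not pass through them. Concretely: if you carry out the absolute-value route with only the single-tile bounds $|c_P|\lesssim|I_P|^{1/2}/S^{1/2}$, $|c_Q|\lesssim|I_Q|^{1/2}/(S')^{1/2}$ coming from Lemma \ref{CP}, you obtain
\[
\Bigl|\sum_{|I_P|\le|I_Q|}c_P c_Q\langle\Phi_P,\Phi_Q\rangle\Bigr|
\;\lesssim\; 2^{-Nk_2}\,\frac{(S')^{1/2}}{S^{1/2}},
\]
where $S=\sum_{T\in\T}|I_T|$, $S'=\sum_{T'\in\T'}|I_{T'}|$. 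Dropping the $2^{-Nk_2}$ shows that this absolute-value scheme yields only $(S')^{1/2}/S^{1/2}$ for Lemma \ref{bessel}, which is not $O(1)$; so Lemma \ref{bessel} cannot be proved this way, and hence cannot be ``rerun'' with your improved kernel. This is precisely why the paper splits into cases on the size of $(S')^{1/2}/S^{1/2}$: your argument handles the case $(S')^{1/2}/S^{1/2}\lesssim 2^{9Mk_2}$, but when the ratio is larger one needs a completely different treatment (rewriting the constrained sum as the full bilinear pairing minus the reversed-constraint sum, and then doing delicate spatial decompositions of the $\Phi_Q$ to extract the $2^{-Mk_2}$ gain from the $L^2$ pieces). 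That second case is the substance of the lemma, and your proposal does not address it.
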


The proof of this lemma is quite delicate and will be presented in the last section of the 
paper.

The main proposition used to estimate expressions of the form (\ref{generic}) is the following.

\begin{proposition}\label{tool}
Let $\vec{\P}$ be a rank $1$ collection of vector tiles of dimension $d\geq 3$. Let also consider
arbitrary sequences of complex numbers $(a^j_{P_j})_P$ for $1\leq j\leq d$. Then, one has
the inequality

$$
\left|
\sum_{P\in \vec{\P}}\frac{1}{|I_P|^{\frac{d-2}{2}}}
a^1_{P_1} ... a^d_{P_d}\right| \lesssim
\prod_{j=1}^d
\size_j( (a^j_{P_j})_{P\in\vec{\P}}  )^{\theta_j}\cdot
\energy_j((a^j_{P_j})_{P\in\vec{\P}}  )^{1 - \theta_j}
$$
for any $0\leq \theta_1, ... , \theta_d < 1$ with $\theta_1 + ... + \theta_d = d-2$
with the implicit constants depending on $(\theta_j)_j$.

\end{proposition}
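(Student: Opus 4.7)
The plan is to follow the standard size/energy paradigm developed in the series of papers leading up to this one. After rescaling each sequence so that, say, $\energy_j((a^j_{P_j}))=1$ for every $j$, it suffices to show that the left-hand side is bounded by $\prod_j \size_j^{\theta_j}$ for exponents with $\sum_j\theta_j=d-2$ and $\theta_j\in(0,1]$. Write $S_j:=\size_j((a^j_{P_j})_{P\in\vec\P})$.

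The first step is a simultaneous size-level decomposition. For each $j\in\{1,\dots,d\}$, run the standard greedy tree selection: iteratively extract maximal trees $T\subset\vec\P$ (which are $i$-trees for some $i\ne j$) on which the $L^2$ average of $(a^j_{P_j})$ is comparable to $2^{-n_j}S_j$, moving down in $n_j$. Let $\vec\P_{n_j}$ denote the union of these trees. By Lemma \ref{jn} and the very definition of energy, the chosen tops satisfy
\[
\sum_T |I_T|\ \lesssim\ 2^{2n_j}\,(\energy_j/S_j)^2\ =\ 2^{2n_j}\,S_j^{-2}.
\]
Set $\vec\P_{\vec n}:=\bigcap_j\vec\P_{n_j}$; this is a disjoint union of trees, each of which is an $i$-tree in the sense required by each of the $d$ size selections.

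The second step is the tree estimate: for any $i_0$-tree $T\subset\vec\P_{\vec n}$,
\[
\sum_{P\in T}\frac{1}{|I_P|^{(d-2)/2}}\prod_{j=1}^d|a^j_{P_j}|\ \lesssim\ |I_T|\prod_{j=1}^d 2^{-n_j}S_j.
\]
Here is where the hypothesis $d\ge 3$ is used: pick two indices $j_1,j_2\in\{1,\dots,d\}\setminus\{i_0\}$. For the remaining $d-3$ indices and for $j=i_0$ use the pointwise bound $|a^j_{P_j}|\le (\text{local size})\,|I_P|^{1/2}$, which follows by taking singleton trees in the definition of $\size_j$. The powers of $|I_P|$ cancel exactly, leaving $\sum_P|a^{j_1}_{P_{j_1}}||a^{j_2}_{P_{j_2}}|$. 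Apply Cauchy--Schwarz; since $T$ is an $i_0$-tree with $i_0\ne j_1,j_2$, $T$ is a legitimate competitor in the definitions of $\size_{j_1}$ and $\size_{j_2}$, so each factor is controlled by $\sqrt{|I_T|}$ times the corresponding local size $\sim 2^{-n_{j_1}}S_{j_1}$ or $\sim 2^{-n_{j_2}}S_{j_2}$. Multiplying yields the claimed bound.

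The last step is to sum and interpolate. For each $\vec n$,
\[
\Bigl|\sum_{P\in\vec\P_{\vec n}}\frac{1}{|I_P|^{(d-2)/2}}\prod_j a^j_{P_j}\Bigr|\ \lesssim\ \prod_j 2^{-n_j}S_j\cdot\sum_{T\subset\vec\P_{\vec n}}|I_T|.
\]
For each $j$ separately, $\sum_T|I_T|\lesssim 2^{2n_j}S_j^{-2}$, so by the geometric mean over any weights $(\theta'_j)$ with $\theta'_j\ge 0$ and $\sum_j\theta'_j=1$,
\[
\sum_T|I_T|\ \lesssim\ \prod_j 2^{2\theta'_j n_j}S_j^{-2\theta'_j}.
\]
Summing over $\vec n$ and collecting $2^{(2\theta'_j-1)n_j}$, convergence of the geometric series requires $\theta'_j<1/2$ for all $j$; this is possible precisely because $d\ge 3$ allows $\sum_j\theta'_j=1$ with each $\theta'_j<1/2$. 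The sum telescopes to $\prod_j S_j^{1-2\theta'_j}$, and restoring the energies (from the initial rescaling) gives $\prod_j S_j^{\theta_j}\energy_j^{1-\theta_j}$ with $\theta_j:=1-2\theta'_j\in(0,1]$ and $\sum_j\theta_j=d-2$, which is exactly the statement of the proposition.

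The main obstacle I expect is verifying the tree estimate cleanly when one of the sizes must be realized on an $i_0$-tree that is excluded from the supremum in $\size_{i_0}$; the way around is to absorb $|a^{i_0}_{P_{i_0}}|$ into the pointwise bound coming from singleton trees (which do appear in the supremum for $\size_{i_0}$), so that the final two-sided Cauchy--Schwarz is applied only in the two selected directions $j_1,j_2\ne i_0$. The remainder of the argument is a routine geometric-series/interpolation exercise, and the appeal to the delicate Bessel Lemma \ref{delicatebessel} enters only if one wishes to refine the energy bound when the collections in question live at controlled distance from some singular set.
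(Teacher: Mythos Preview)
Your argument is essentially the paper's own proof: the same greedy size-level decomposition (stated in the paper as Lemma~\ref{decc} and Corollary~\ref{fulldecc}), the same single-tree estimate via pointwise bounds on $d-2$ factors followed by Cauchy--Schwarz on the remaining two, and the same geometric-series interpolation. Your handling of the $i_0$-direction in the tree estimate (singleton trees are $i$-trees for any $i\ne i_0$) is exactly right and matches the paper.

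There is one genuine omission. Your stated range $\theta_j\in(0,1]$ does not match the proposition, which requires $\theta_j\in[0,1)$. Concretely, when some $\theta_{j_0}=0$ your weight becomes $\theta'_{j_0}=\tfrac12$, and then the $n_{j_0}$-series $\sum_{n_{j_0}} 2^{(2\theta'_{j_0}-1)n_{j_0}}$ diverges, so the interpolation step fails at that endpoint. The paper treats this boundary case by a separate short computation: one keeps the full $\min(2^{2n_1},\dots,2^{2n_d})$ instead of replacing it immediately by a geometric mean, sums first in $n_{j_0}$ using the elementary inequality $\sum_{n}\min(2^{n},2^{-n}a)\lesssim a^{1/2}$, and only then interpolates over the remaining $d-1$ indices. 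This endpoint is not merely cosmetic: it is invoked later in Section~5 (in the energy estimates, where the last exponent $\theta_{\#_1+1}$ is taken equal to zero).

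Finally, the closing remark about Lemma~\ref{delicatebessel} is misplaced: that lemma plays no role in Proposition~\ref{tool} and enters only later, in the recursive energy estimates for the operators $T^G$.
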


\begin{proof}

The proof of it is based on the following lemma and its corollary, which have been proven in
\cite{mtt-5}.

\begin{lemma}\label{decc}
Let $1\leq j\leq d$, $\vec{\P'}$ be a subset of $\vec{\P}$, $\n\in \Z$ and assume that

$$
\size_j( (a^j_{P_j})_{P\in\vec{\P'}}  ) \leq 2^{-\n}\energy_j((a^j_{P_j})_{P\in\vec{\P}}  ).
$$
Then, one can decompose $\vec{\P'}$ as $\vec{\P'} = \vec{\P''} \cup \vec{\P'''}$ such that 

$$
\size_j( (a^j_{P_j})_{P\in\vec{\P''}}  ) \leq 2^{-\n-1}\energy_j((a^j_{P_j})_{P\in\vec{\P}}  )
$$
and also such that $\vec{\P'''}$ can be written as a disjoint union of trees in $\T$ with the
property that

$$\sum_{T\in \T} |I_T| \lesssim 2^{2\n}.
$$

\end{lemma}

By iterating the above lemma we immediately obtain the following Corollary.

\begin{corollary}\label{fulldecc}
Fix $1\leq j\leq d$. Then, there exists a partition 

$$\vec{\P} = \bigcup_{\n\in\Z} \vec{\P}^j_{\n}$$
where for each $\n\in\Z$ one has

$$
\size_j( (a^j_{P_j})_{P\in\vec{\P}^j_{\n}}  ) \leq 
\min (2^{-\n}\energy_j((a^j_{P_j})_{P\in\vec{\P}}  ), \size_j( (a^j_{P_j})_{P\in\vec{\P}}  ) ).
$$
Also, we can cover $\vec{\P}^j_{\n}$ by a disjoint union $\T^j_{\n}$ of trees such that

$$\sum_{T\in \T^j_{\n}} |I_T| \lesssim 2^{2\n}.
$$

\end{corollary}

We can now start the actual proof of our Proposition \ref{tool}.

First, let us observe that for every $l$-tree $T$ one can estimate the corresponding
term in the inequality by

$$
|
\sum_{P\in T}\frac{1}{|I_P|^{\frac{d-2}{2}}}
a^1_{P_1} ... a^d_{P_d}| \lesssim
\sum_{P\in T}\frac{1}{|I_P|^{\frac{d-2}{2}}}
|a^1_{P_1}| ... |a^d_{P_d}|\lesssim
$$

$$\prod_{k\neq l_1, l_2}
(\sup_{P\in T}\frac{|a^k_{P_k}|}{|I_P|^{1/2}})\cdot
(\sum_{P\in T} |a^{l_1}_{P_{l_1}}|^2 )^{1/2} \cdot
(\sum_{P\in T} |a^{l_2}_{P_{l_2}}|^2 )^{1/2}
$$
for any $l_1\neq l$ and $l_2\neq l$. But this is clearly smaller than

$$\prod_{j=1}^d 
\size_j( (a^j_{P_j})_{P\in T}  )\cdot |I_T|.
$$
Using this simple ``tree estimate'' and applying $d$ times Corollary \ref{fulldecc},
we can estimate our general left hand side of our inequality by

\begin{equation}\label{expression}
E_1 ... E_d \sum_{\n_1, ... ,\n_d}
2^{-\n_1} ... 2^{-\n_d} \sum_{T\in \T_{\n_1,...,\n_d}} |I_T|
\end{equation}
where $\T_{\n_1,...,\n_d}$ is just the intersection of the collections of trees
$\T^j_{\n_j}$ for $j = 1, ... , d$ provided by the Corollary \ref{fulldecc}
and we denoted for simplicity by $E_j := \energy_j((a^j_{P_j})_{P\in\vec{\P}}  )$
and we will also use the notation $S_j$ for $\size_j((a^j_{P_j})_{P\in\vec{\P}}  )$.

One should also observe that as a consequence of the same corollary, the above summations
run inside the set of integers $\n_1, ... , \n_d$ for which

$$2^{-\n_j} \lesssim \frac{S_j}{E_j}$$
for $j = 1, ... , d$. On the other hand, we also know that

$$ \sum_{T\in \T_{\n_1,...,\n_d}} |I_T|\leq \sum_{T\in \T^j_{\n_j}} |I_T|
\lesssim 2^{2\n_j}$$
for any $j = 1, ... , d$ and as a consequence we can write

\begin{equation}\label{luc}
\sum_{T\in \T_{\n_1,...,\n_d}} |I_T| \lesssim \min (2^{2\n_1}, ... , 2^{2\n_d}).
\end{equation}

To prove that the proposition holds for any $0 < \theta_1, ... , \theta_d < 1$,
one can use instead of (\ref{luc}) the weaker inequality

\begin{equation}\label{lucluc}
\sum_{T\in \T_{\n_1,...,\n_d}} |I_T| \lesssim
2^{2\alpha_1} ... 2^{2\alpha_d}
\end{equation}
for any $0 < \alpha_1, ... , \alpha_d < 1$ so that $\alpha_1 + ... + \alpha_d = 1$.

In particular, this allows us to estimate further our expression by

$$
E_1 ... E_d \sum_{\n_1, ... ,\n_d}
2^{-\n_1} ... 2^{-\n_d}
2^{2\alpha_1} ... 2^{2\alpha_d} =
$$

$$E_1 ... E_d \sum_{\n_1, ... ,\n_d} 2^{-\n_1(1-2\alpha_1)} ... 2^{-\n_d(1-2\alpha_d)}.
$$
Now, assuming that in addition we also have $1- 2\alpha_j > 0$, for every
$j = 1, ... , d$ we obtain the upper bound

$$E_1 ... E_d (\frac{S_1}{E_1})^{(1-2\alpha_1)} ... (\frac{S_d}{E_d})^{(1-2\alpha_d)} =
$$

$$ S_1^{(1-2\alpha_1)} ... S_d^{(1-2\alpha_d)}
E_1^{2\alpha_1} ... E_d^{2\alpha_d}.
$$
Since we observe that $(1-2\alpha_1) + ... + (1-2\alpha_d) = d - 2 (\alpha_1 + ... + \alpha_d)
= d-2$, this proves our assertion.

In the case that one of our $\theta$'s is equal to zero (note that at most one
can be zero !), say $\theta_d = 0$, we can estimate our expression
in (\ref{expression}) by

$$
E_1 ... E_d \sum_{\n_1, ... ,\n_d}
2^{-\n_1} ... 2^{-\n_d}
\min (2^{2\n_1}, ... , 2^{2\n_d}) =
$$

$$E_1 ... E_d \sum_{\n_1, ... ,\n_d}
2^{-\n_1} ... 2^{-\n_{d-1}}
\min (2^{\n_d}, 2^{-\n_d} \min (2^{2\n_1}, ... , 2^{2\n_{d-1}})).
$$

Now, if we fix $\n_1, ... , \n_{d-1}$ and first sum over $\n_d$ using the elementary inequality

\begin{equation}
\sum_{\n\in \Z}
\min (2^{\n}, 2^{-\n} a ) \lesssim a^{1/2},
\end{equation}
we obtain the bound

$$
E_1 ... E_d \sum_{\n_1, ... ,\n_{d-1}}
2^{-\n_1} ... 2^{-\n_{d-1}}
\min (2^{\n_1}, ... , 2^{\n_{d-1}}) \lesssim
$$

$$E_1 ... E_d \sum_{\n_1, ... ,\n_{d-1}}
2^{-\n_1} ... 2^{-\n_{d-1}}
2^{\alpha_1 \n_1} ... 2^{\alpha_{d-1} \n_{d-1}}
$$
for every $0 < \alpha_1, ... , \alpha_{d-1} < 1$ with the property that
$\alpha_1 + ... + \alpha_{d-1} = 1$.

After summing the above expression, we obtain the upper bound

$$E_1 ... E_d (\frac{S_1}{E_1})^{1 - \alpha_1} ... 
(\frac{S_{d-1}}{E_{d-1}})^{1 - \alpha_{d-1}} = $$

$$S_1^{1 - \alpha_1} ... S_{d-1}^{1 - \alpha_{d-1}} 
E_1^{\alpha_1} ... E_{d-1}^{\alpha_{d-1}} E_d$$
which coincides with the desired extimate.

\end{proof}

Having Proposition \ref{tool} at our disposal, we can now start the proof of
Theorem \ref{mainthd}. Fix $F_1, ... , F_n$ and $F$ arbitrary measurable sets
of finite measure and $f_1\leq \chi_{F_1}, ... , f_n \leq \chi_{F_n}$. Assume also that
$|F| \sim 1$. Our goal is to construct a subset $F'\subseteq F$ with $|F'| \sim |F|$, so that 
the corresponding inequality in Theorem \ref{mainthd} holds.

First, for every rooted tree $G$ we will construct inductively an exceptional set
$\Omega_G$ as follows. Assume that $G$ has height $1$ and an arbitrary number of leaves $L$.
Then, the exceptional set $\Omega_G$ is defined by
\footnote{M is the classical Hardy-Littlewood maximal operator}

\begin{equation}
\Omega_G : = \{ x : M(\chi_{F_1})(x) > C |F_1|\} \cup ... 
\cup \{ x : M(\chi_{F_L})(x) > C |F_L| \}
\end{equation}
where $C > 0$ is big enough to guarantee that $|\Omega_G| << 1$.

Suppose now that we know how to construct such exceptional sets for arbitrary rooted trees
of height smaller or equal than $h$ and we describe the construction of $\Omega_G$ in the case
of an arbitrary rooted tree of height $h + 1$. Fix such a tree $G$. Assume that the root of it
is $u$ and that the sons of $u$ are $u_1, ... , u_{\#}$. Denote also, as usual, by $G_i$
the subtree of $G$ whose root is $u_i$ for $1\leq i \leq \#$.

Clearly, either $G_i$ is a leave or it is a tree of a strictly smaller height.
Fix $1\leq i \leq \#$ so that $G_i$ is not a leave. Fix also $(k_v)_{v\in V_{G_i}}$
a vector indexed over the vertices of $G_i$ whose entries are all positive integers.
Denote by $T_{G_i}^{(k_v)_v }$ the model operator defined by the same formula as
$T^{G_i}$, but where the implicit sums run over the sets $\vec{\P}_{G_v}^{k_v}$
instead of $\vec{\P}_{G_v}$, where by $\vec{\P}_{G_v}^{k_v}$ we denote the collection of all
vector-tiles $P\in \vec{\P}_{G_v}$ having the property that
\footnote{As usual, $G_v$ is the rooted sub-tree whose root is the vertex $v$ and 
$\Omega_{G_v}$ is the exceptional set associated to $G_v$ which exists by the induction
hypothesis.  }

$$1 + \frac{\dist (I_P, \Omega^c_{G_v})}{ |I_P|} \sim 2^{k_v}.$$
Denote also by $\widetilde{\Omega}_{G_i}$ the set

\begin{equation}
\widetilde{\Omega}_{G_i} : =
\bigcup_{(k_v)_v \in \Z^{|V_{G_i}|}}
\left\{ x : M( T_{G_i}^{(k_v)_v }((f_l)_{l\in \I_i}) )(x) > 
C (\prod_{ v\in V_{G_i}} 2^{k_v}) \| T_{G_i}^{(k_v)_v }((f_l)_{l\in \I_i})   \|_2
\right\}.
\end{equation}

Clearly, $|\widetilde{\Omega}_{G_i}| << 1$ if $C > 0$ is a big enough constant.
Similarly, one defines $\widetilde{\Omega}_{G_j}$ for any other index $1\leq j\leq \#$
for which $G_j$ is not a leave.

In case $G_j$ is a leave, then instead, we define $\widetilde{\Omega}_{G_j}$ by

$$\widetilde{\Omega}_{G_j} : = \{ x : M(\chi_{F_j})(x) > C |F_j| \}.$$
In the end, we define the exceptional set associated to $G$ by

\begin{equation}
\Omega_G : = 
\left(\bigcup_{j=1}^{\#}\widetilde{\Omega}_{G_j}\right)\bigcup
\left( \bigcup_{j : G_j \neq \text{leave}}\Omega_{G_j}\right). 
\end{equation}
Then, we simply define $F'$ by

$$F' : = F\setminus \Omega_G$$
and we observe that indeed, $|F'|\sim 1$ if all the constants $C$ involved are large enough.

We are therefore left with estimating the following expression

\begin{equation}\label{luc2}
\int_0^1\sum_{P\in\vec{\P}_G}
\frac{1}{|I_P|^{\frac{\#-1}{2}}}
\langle T^{G_1}_{|I_P|} ((f_l)_{l\in \I_1}), \Phi^{1, \alpha}1_{P_1} \rangle \cdot...\cdot
\langle T^{G_{\#}}_{|I_P|} ((f_l)_{l\in \I_{\#}}), \Phi^{\#, \alpha}_{P_{\#}}\rangle \cdot
\langle \chi_{F'}, \Phi^{\# + 1, \alpha}_{P_{\# + 1}}\rangle d \alpha.
\end{equation}
Fix now $(k_v)_{v\in V_G}$ positive integers. We will assume from now on that all the implicit 
inner sums in (\ref{luc2}) are taken over collections of the form $\vec{\P}^{k_v}_{G_v}$
for every $v\in V_G$. We will estimate the corresponding term under these restrictions and in 
the end we will sum over all the vectors $(k_v)_{v\in V_G}$.

By applying Proposition \ref{tool} we can estimate (\ref{luc2}) by

\begin{equation}
\sup_{0\leq \alpha \leq 1}
[\size_1 ( (\langle T^{G_1}_{|I_P|} ((f_l)_{l\in \I_1}), \Phi^{1, \alpha}_{P_1} \rangle   )_P)^
{\theta_1}]
\cdot
\sup_{0\leq \alpha \leq 1}
[\energy_1 ( (\langle T^{G_1}_{|I_P|} ((f_l)_{l\in \I_1}), \Phi^{1, \alpha}_{P_1} \rangle   )_P)
^{1- \theta_1}]\cdot ... \cdot
\end{equation}

$$
\sup_{0\leq \alpha \leq 1}
[\size_{\#}( (\langle T^{G_{\#}}_{|I_P|} ((f_l)_{l\in \I_{\#}}), \Phi^{\#, \alpha}_{P_{\#}}
\rangle  )_P)
^{\theta_{\#}}]
\cdot
\sup_{0\leq \alpha \leq 1}
[\energy_{\#}( (\langle T^{G_{\#}}_{|I_P|} ((f_l)_{l\in \I_{\#}}), \Phi^{\#, \alpha}_{P_{\#}}
\rangle  )_P)
^{1- \theta_{\#}}]
\cdot
$$

$$
\sup_{0\leq \alpha \leq 1}
[\size_{\# + 1}( (\langle \chi_{F'}, \Phi^{\# + 1, \alpha}_{P_{\# + 1}}\rangle )_P)
^{\theta_{\# + 1}}]\cdot
\sup_{0\leq \alpha \leq 1}
[\energy_{\# + 1}( (\langle \chi_{E'}, \Phi^{\# + 1, \alpha}_{P_{\# + 1}}\rangle )_P)
^{1- \theta_{\# + 1}}],
$$
for every positive numbers $0\leq \theta_1, ... , \theta_{\# + 1} < 1$ so that
$\theta_1 + ... + \theta_{\# + 1} = \# - 1$.

We write for simplicity the previous expression as

\begin{equation}\label{simplu}
[S_1^{\theta_1} E_1^{1 - \theta_1}]\cdot ... \cdot
[S_{\#}^{\theta_{\#}} E_{\#}^{1- \theta_{\# + 1} }]\cdot
[S_{\# + 1}^{\theta_{\# + 1}} E_{\# + 1}^{1- \theta_{\# + 1}}].
\end{equation}

\underline{Estimates for $[S_1^{\theta_1} E_1^{1 - \theta_1}]$. }

We concentrate now on estimating the term $[S_1^{\theta_1} E_1^{1 - \theta_1}]$.
It will be later on clear that in exactly the same way one can estimate every other term
of type $[S_j^{\theta_j} E_j^{1 - \theta_j}]$ for $1\leq j\leq \#$.

To fix the notations, we also assume that the sons of $u_1$ (which is the root of $G_1$)
are $u_1^1, ... , u_1^{\#_1}$. Denote also by $G_1^i$ the subtree of $G_1$ whose root
is $u_1^i$ for $1\leq i\leq \#_1$.

\underline{Estimates for $E_1$.}

Fix $\alpha$ for which the suppremum in the definition of $E_1$ is attained and consider the 
corresponding
expression. We will also suppress for simplicity the dependence on $\alpha$ in the next 
formulas since its presence is irrelevant.

By duality, we know that there exists a sequence of complex numbers $(C^1_{P_1})_{P_1}$
as in Lemma \ref{CP} so that

$$E_1 \sim \sum_P
\langle T^{G_1}_{|I_P|} ((f_l)_{l\in \I_1}), \Phi^1_{P_1} \rangle \overline{C^1_{P_1}} =
$$

$$ \sum_P
\left\langle
\int_0^1 \sum_{Q : |I_Q|\geq |I_P| }
\frac{1}{|I_Q|^{\frac{\#_1 - 1}{2}}}
\langle T^{G^1_1}_{|I_Q|}((f_l)_{l\in \I^1_1}), \Phi^{1, \alpha}_{Q_1} \rangle ...
\langle T^{G^{\#_1}_1}_{|I_Q|}((f_l)_{l\in \I^{\#_1}_1}), \Phi^{\#_1, \alpha}_{Q_{\#_1}} \rangle
\Phi^{\#_1 + 1, \alpha}_{Q_{\#_1 + 1}} d \alpha , \Phi^1_{P_1} \right \rangle
\overline{C^1_{P_1}} =
$$

\begin{equation}\label{luc3}
\int_0^1
\sum_Q
\frac{1}{|I_Q|^{\frac{\#_1 - 1}{2}}}
\langle T^{G^1_1}_{|I_Q|}((f_l)_{l\in \I^1_1}), \Phi^{1, \alpha}_{Q_1} \rangle ...
\langle T^{G^{\#_1}_1}_{|I_Q|}((f_l)_{l\in \I^{\#_1}_1}), \Phi^{\#_1, \alpha}_{Q_{\#_1}} \rangle
\cdot
\langle \sum_{P : |I_P|\leq |I_Q|} C^1_{P_1}\Phi^1_{P_1} , \Phi^{\#_1 + 1, \alpha}_{Q_{\#_1 + 1}}
\rangle d \alpha.
\end{equation}
By applying the same Proposition \ref{tool}, we can estimate this further by

\begin{equation}
[(S^1_1)^{\beta_1} (E^1_1)^{1 - \beta_1}]\cdot ... 
[(S^{\#_1}_1)^{\beta_{\#_1}} (E^{\#_1}_1)^{1 - \beta_{\#_1}}]\cdot 1 \cdot
E_1^{\#_1 + 1},
\end{equation}
for any $0 < \beta_1, ... , \beta_{\#_1} < 1$ so that 
$\beta_1 + ... + \beta_{\#_1} = \#_1 - 1$. By $S^j_1$ and $E^j_1$ we denoted the expressions

$$S^j_1 : = \sup_{ 0 \leq \alpha \leq 1}
[\size_j ( (\langle T^{G^j_1}_{|I_Q|} ((f_l)_{l\in \I^j_1}) 
\Phi^{j, \alpha}_{Q_j} \rangle   )_Q)]
$$
and 

$$E^j_1 : = \sup_{ 0 \leq \alpha \leq 1}
[\energy_j ( (\langle T^{G^j_1}_{|I_Q|} ((f_l)_{l\in \I^j_1}), 
\Phi^{j, \alpha}_{Q_j} \rangle   )_Q)],
$$
for any $1\leq j\leq \#_1$. Clearly, $E_1^{\#_1 + 1}$ is defined similarly and corresponds
to the last term in (\ref{luc3}).

To estimate $E_1^{\#_1 + 1}$, we fix as before the index $\alpha$ for which the
suppremum is attained and consider the corresponding expression. We suppress again the 
dependence on $\alpha$ for simplicity, since it is irrelevant to the argument.

By using Lemma \ref{CP} there exists a sequence of complex numbers
$(C^{\#_1 + 1}_{Q_{\# + 1}})_Q$ having the property that

$$E_1^{\#_1 + 1} \sim \sum_{P, Q : |I_P| \leq |I_Q| }
C^1_{P_1}\overline { C^{\#_1 + 1}_{Q_{\# + 1}} }
\langle \Phi^1_{P_1} , \Phi^{\#_1 + 1}_{Q_{\#_1 + 1}}\rangle.
$$
Let us also recall that the summation above runs over $P\in \P_G^{k_u}$ and
$Q\in \P_{G_1}^{k_{u_1}}$. On the other hand, by construction, we also know that
$\Omega_{G_1} \subseteq \Omega_G$ and so $\Omega_G^c \subseteq \Omega_{G_1}^c$.

In particular, this means that by using the previous Lemmas \ref{bessel}
and \ref{delicatebessel} we have that $E_1^{\#_1 + 1}$ is $O(1)$ in general, but in the case
when $k_{u_1} >> k_u$ we have that $E_1^{\#_1 + 1} \lesssim 2^{-M k_{u_1}}$ for arbitrary
constants $M > 0$ (with the implicit constant depending on $M$).

\underline{Estimates for $S_1$.}

Fix $\alpha$ for which the suppremum in the definition of $S_1$ is attained and consider the 
corresponding expression with $\alpha$ suppressed. We have

$$S_1 \leq
\size_1 (\langle T^{G_1}((f_l)_{l\in \I_1}), \Phi^1_{P_1} \rangle _P ) + 
\size_1 (\langle T^{G_1, \ast}_{|I_P|}((f_l)_{l\in \I_1}), \Phi^1_{P_1} \rangle _P )
: = I + II
$$
where $T^{G_1, \ast}_{|I_P|}$ is defined as being the sum over those vector-tiles with the 
property $|I_Q| < |I_P|$.

To estimate $I$, from the definition of the exceptional sets, it is easy to see that one has

$$ I \lesssim
(\prod_{v\in V_{G_1}} 2^{k_v})\cdot 2^{k_u} \cdot
\|T^{G_1}((f_l)_{l\in \I_1})\|_2.
$$

To estimate $\|T^{G_1}((f_l)_{l\in \I_1})\|_2$ we pick $g\in L^2$, $\|g\|_2 = 1$
so that this term becomes equivalent with

\begin{equation}
\int_0^1
\sum_Q
\frac{1}{|I_Q|^{\frac{\#_1 - 1}{2}}}
\langle T^{G^1_1}_{|I_Q|}((f_l)_{l\in \I^1_1}), \Phi^{1, \alpha}_{Q_1} \rangle ...
\langle T^{G^{\#_1}_1}_{|I_Q|}((f_l)_{l\in \I^{\#_1}_1}), \Phi^{\#_1, \alpha}_{Q_{\#_1}} \rangle
\cdot
\langle g, \Phi^{\#_1 + 1, \alpha}_{Q_{\#_1 + 1}}\rangle
d \alpha
\end{equation}
and as before by using the same Proposition \ref{tool} together with Lemma \ref{energyest}
we get an estimate of the form

$$
[(S^1_1)^{\beta_1} (E^1_1)^{1 - \beta_1}]\cdot ... 
[(S^{\#_1}_1)^{\beta_{\#_1}} (E^{\#_1}_1)^{1 - \beta_{\#_1}}]\cdot 1 \cdot
1,
$$
for every $\beta_1, ... , \beta_{\#_1}$ exactly as before.

To estimate $II$, pick $T$ a tree where the corresponding suppremum is attained.
Then, observe that $II$ becomes equivalent with

$$\frac{1}{|I_T|}
\|(\sum_{P\in T}
\frac{|\langle T^{G_1}_T((f_l)_{l\in \I_1}), \Phi^1_{P_1} \rangle |^2}{|I_P|}
\chi_{I_P} )^{1/2} \|_{1, \infty}
$$
where $T^{G_1}_T$ is defined to be the corresponding sum over vector-tiles $Q$
having the property that there exists a $P\in T$ so that $|\omega_{P_1} | < 
|\omega_{Q_{\#_1 + 1}} |$ and
$\omega_{P_1} \cap \omega_{Q_{\#_1 + 1}} \neq \emptyset$. We denote this set of vector-tiles by
$\Q_T$. By using Lemma \ref{sizeest} we see that the above term is smaller than

$$\frac{1}{|I_T|}
\| T^{G_1}_T((f_l)_{l\in \I_1})    \|_{L^1(\widetilde{\chi}_{I_T}^M)} = 
$$

$$ \frac{1}{|I_T|}
\int_0^1\sum_{Q\in \Q_T}
\frac{1}{|I_Q|^{\frac{\#_1 - 1}{2}}}
\langle T^{G^1_1}_{|I_Q|}((f_l)_{l\in \I^1_1}), \Phi^{1, \alpha}_{Q_1} \rangle ...
\langle T^{G^{\#_1}_1}_{|I_Q|}((f_l)_{l\in \I^{\#_1}_1}), \Phi^{\#_1, \alpha}_{Q_{\#_1}} \rangle
\cdot
\langle h \widetilde{\chi}_{I_T}^M, \Phi^{\#_1 + 1, \alpha}_{Q_{\#_1 + 1}}\rangle
d \alpha
$$
for some well chosen function $h\in L^{\infty}$ with $\|h\|_{\infty} = 1$.

This can be also written as

$$ \frac{1}{|I_T|}
\sum_{m=1}^{\infty}
\int_0^1\sum_{Q\in \Q^m_T}
\frac{1}{|I_Q|^{\frac{\#_1 - 1}{2}}}
\langle T^{G^1_1}_{|I_Q|}((f_l)_{l\in \I^1_1}), \Phi^{1, \alpha}_{Q_1} \rangle ...
\langle T^{G^{\#_1}_1}_{|I_Q|}((f_l)_{l\in \I^{\#_1}_1}), \Phi^{\#_1, \alpha}_{Q_{\#_1}} \rangle
\cdot
\langle h \widetilde{\chi}_{I_T}^M, \Phi^{\#_1 + 1, \alpha}_{Q_{\#_1 + 1}}\rangle
d \alpha
$$
where $\Q_T^m$ denotes the set of all vector-tiles $Q\in \Q_T$ having the property that

$$m-1 \leq \frac{\dist ( I_T, I_Q)}{|I_T|} \leq m.$$
Since it is not difficult to see that all these $\Q_T^m$ sets are trees, we deduce that the above expression can be estimated
by

\begin{equation}
\frac{1}{|I_T|}
\sum_{m=1}^{\infty}
S_1^1\cdot ... \cdot S_1^{\#_1}\cdot
\sup_{0\leq \alpha \leq 1}
\sup_{Q\in \Q^m_T}
\frac{| \langle h \widetilde{\chi}_{I_T}^M, \Phi^{\#_1 + 1, \alpha}_{Q_{\#_1 + 1}}\rangle  |}
{|I_Q|^{1/2}}\cdot |I_T| \lesssim
\end{equation}

$$
\frac{1}{|I_T|}
\sum_{m=1}^{\infty}
\frac{1}{m^{10}} S_1^1\cdot ... \cdot S_1^{\#_1} |I_T|
\lesssim S_1^1\cdot ... \cdot S_1^{\#_1}.
$$
Putting all these estimates together, we can finish our original estimate for
$S_1^{\theta_1}\cdot E_1^{1-\theta_1}$ as follows:

$$S_1^{\theta_1}\cdot E_1^{1-\theta_1} \leq
(I + II)^{\theta_1}\cdot E_1^{1-\theta_1} \leq
I^{\theta_1}\cdot E_1^{1-\theta_1} + II^{\theta_1}\cdot E_1^{1-\theta_1}\leq
$$

$$
C\cdot
[(S^1_1)^{\beta_1}\cdot (E_1^1)^{1-\beta_1}]\cdot ... 
\cdot [(S_1^{\#_1})^{\beta_{\#_1}}\cdot (E_1^{\#_1})^{1-\beta_{\#_1}}] =
$$

$$C\cdot [S_1^1\cdot ... \cdot S_1^{\#_1}]^{\theta_1}\cdot
[(S^1_1)^{\beta_1}\cdot (E_1^1)^{1-\beta_1}\cdot ... 
\cdot (S_1^{\#_1})^{\beta_{\#_1}}\cdot (E_1^{\#_1})^{1-\beta_{\#_1}}]^{1-\theta_1} +
$$

$$
C\cdot
[(S^1_1)^{\beta_1}\cdot (E_1^1)^{1-\beta_1}]\cdot ... 
\cdot [(S_1^{\#_1})^{\beta_{\#_1}}\cdot (E_1^{\#_1})^{1-\beta_{\#_1}}] + 
$$

$$
C\cdot 
[(S_1^1)^{\theta_1 + \beta_1 (1-\theta_1)}\cdot (E_1^1)^{(1-\beta_1)(1-\theta_1)}]
\cdot ... \cdot
[(S_1^{\#_1})^{\theta_1 + \beta_{\#_1} (1-\theta_1)}\cdot (E_1^{\#_1})^{(1-\beta_{\#_1})
(1-\theta_1)}]
: =
$$

$$
C\cdot
[(S^1_1)^{\beta_1}\cdot (E_1^1)^{1-\beta_1}]\cdot ... 
\cdot [(S_1^{\#_1})^{\beta_{\#_1}}\cdot (E_1^{\#_1})^{1-\beta_{\#_1}}] + 
$$

$$
C\cdot
[(S^1_1)^{\gamma_1}\cdot (E_1^1)^{1-\gamma_1}]\cdot ... 
\cdot [(S_1^{\#_1})^{\gamma_{\#_1}}\cdot (E_1^{\#_1})^{1-\gamma_{\#_1}}],
$$
for some numbers $0 < \gamma_1, ... , \gamma_{\#_1} < 1$.

In other words we showed that the expression 
$[S_1^{\theta_1}\cdot E_1^{1 - \theta_1}]$ which corresponds to the subtree $G_1$
(whose root is $u_1$), can be estimated by a finite sum of products of similar expressions
involving terms which correspond to the sons of $u_1$, namely $u_1^1, ... , u_1^{\#_1}$.
It is not difficult to observe that one can estimate in exactly the same way all the other 
expressions of the form
$[S_j^{\theta_j}\cdot E_j^{1 - \theta_j}]$ in (\ref{simplu}) for $1\leq j\leq \#$.
Also, it is important to observe that the constants $C$ above depend as we have seen on the 
integers $(k_v)_{v\in V_{G_1}}$ and also on $k_u$. 

Clearly, one can then iterate this
procedure further, eventualy arriving at estimating the expressions corresponding to the leaves
of the rooted tree $G$. It is then easy to see that in the case when the sequence
$(a_P)_P$ is indeed of the form $(\langle f_j, \Phi_P \rangle )_P$, one clearly has

$$S := \size ( (\langle f_j, \Phi_P \rangle )_P) \lesssim 2^{k_v} \min ( |F_j|, 1)
\lesssim 2^{k_v} |F_j|^{\beta}$$
for every $ 0 < \beta < 1$, where $v$ is the vertex whose son is the leave indexed ``$j$'',
while

$$E := \energy ( (\langle f_j, \Phi_P \rangle )_P) \lesssim |F_j|^{1/2}.$$
In particular, this implies that any product of the form $S^{\theta}\cdot E^{1-\theta}$
for some $0 < \theta < 1$, becomes smaller than

\begin{equation}
|F_j|^{\beta \theta} \cdot |F_j|^{(1-\theta)/2} = |F_j|^{1/2 + \theta (\beta - 1/2)}
\end{equation}
and clearly, the exponent $1/2 + \theta (\beta - 1/2)$ can be made arbitarily close
to $\frac{1}{2}$ by taking $\beta$ close to $\frac{1}{2}$ (no matter which $\theta$ we face).

Putting all these estimates together and also using the fact that the size of the last sequence
$ (\langle \chi_{F'}, \Phi^{\# + 1}_{\# + 1} \rangle )_P$ in (\ref{luc2}) is smaller
than $2^{-M k_u}$ (for any $M$ arbitrarily large, with the implicit constants depending on it)
while the energy of it is $O(1)$, we obtain au upper bound of the form

\begin{equation}
C\cdot |F_1|^{\alpha_1}\cdot ... \cdot |F_n|^{\alpha_n}
\end{equation}
for $\alpha_1, ... , \alpha_n$ arbitraily close to $\frac{1}{2}$ (as required in
Theorem \ref{mainthd}), where the constant $C$ depends on all the integers
$(k_v)_{v\in V_G}$ fixed before.

However, it is not difficult to see that $C$ is actually of the form

$$ C_n \prod_{v\in V_G} 2^{C_s k_v}\cdot 2^{C_e k_v}$$
where $C_s$ is the constant coming from estimating the various sizes which can be chosen
to be dependent only on $n$, while $C_e$ is the constant coming from estimating the various 
energies which is either zero, or it is of the form $-M$ for an arbitrarily big
$M$, as we have seen.

In order to see that this big geometric series is convergent when we sum over all the 
positive integers $(k_v)_{v\in V_G}$, it is enough to observe that every time $v$
and $w$ are adjacent and say $w$ is the son of $v$, we have always three possibilities:
either $k_w << k_v$ or $k_w \sim k_v$ or $k_w >> k_v$. In the first two cases one should first 
sum over $k_w$ and get a bound of the form $2^{C k_v}$, while in the third one should also sum
over $k_w$ first since this time the corresponding energy estimate comes with a factor of type 
$2^{-M k_w}$ and in this case we also obtain an upper bound of the same type $2^{ C k_v}$.

Hence, if one starts summing from the vertices having the highest levels
(those vertices whose sons are the leaves of the tree) and continues until one reaches
the root of the tree, one sees that the geometric sum is indeed convergent.
This ends the proof of Theorem \ref{mainthd}.

We are therefore left with proving Lemma \ref{delicatebessel}.

\section{Proof of Lemma \ref{delicatebessel}}

Fix $\vec{\P}, \vec{\Q}$ and $S_{\vec{\P}}\subseteq S_{\vec{\Q}}$, fix also
$k_2 >> k_1$ and to simplify the notation suppress from now on the dependence on the indices
$i, j$ which appear in Lemma \ref{delicatebessel}. We would therefore like to prove that

\begin{equation}\label{last}
\left|
\sum_{P, Q : |I_P| \leq |I_Q|} C_P C_Q
\langle \Phi_P, \Phi_Q \rangle \right |
\lesssim 2^{ - M k_2}
\end{equation}
for any $M > 0$ with the implicit constants depending on $M$. We also know from hypothesis that
the collections $\vec{\P}$ and $\vec{\Q}$ can be written as unions of strongly disjoint
trees which we call $\T$ and $\T'$ respectively. We also denote by $S$ and $S'$ the expressions

$$ S : = \sum_{T\in \T} |I_T|$$
and 

$$ S' := \sum_{T\in \T'} |I_T|.$$
We should note from the very beginning the crucial fact that for any $Q_1$, $Q_2$ with
$|I_{Q_1}| \neq |I_{Q_2}|$ one has $I_{Q_1} \cap I_{Q_2} = \emptyset$ and also that
$2^{k_2 - 5} I_Q \cap I_P = \emptyset$ for any $P$ and $Q$. It is also important to note that since $k_2 >> k_1$ one has that all the trees in $\T'$ are ``one-tile trees''
and as a consequence we have $S' = \sum_Q |I_Q|$.

Using these, the left hand side of the above inequality (\ref{last}) can be estimated by

$$\sum _{P, Q}
|C_P| |C_Q| |\langle \Phi_P, \Phi_Q \rangle| \lesssim
\sum_{Q}
|C_Q| \left(\sum_{P : |I_P| \leq |I_Q|, \omega_P \cap \omega_Q \neq \emptyset}|C_P||\langle \Phi_P, \Phi_Q \rangle|\right)\lesssim
$$

$$\frac{1}{(S')^{1/2}}\sum_Q |I_Q|^{1/2}\left(\sum_{P : |I_P| \leq |I_Q|, \omega_P \cap \omega_Q \neq \emptyset}|C_P||\langle \Phi_P, \Phi_Q \rangle|\right)\lesssim$$

$$\frac{1}{(S')^{1/2}}\sum_Q |I_Q|^{1/2}\left(\sum_{P : |I_P| \leq |I_Q|, \omega_P \cap \omega_Q \neq \emptyset}\frac{|I_P|^{1/2}}{S^{1/2}}
|\langle \Phi_P, \Phi_Q \rangle|\right)\lesssim$$

$$
\frac{1}{(S')^{1/2}} \frac{1}{S^{1/2}}
\sum_Q
\left(\sum_{P : |I_P| \leq |I_Q|, \omega_P \cap \omega_Q \neq \emptyset} |\langle \widetilde{\chi}^{2N}_{I_P}, \widetilde{\chi}^{2N}_{I_Q} \rangle|\right),$$
for any $N > 0$ with the implicit constants depending on $N$.

Using now the fact that all the $P$ tiles are disjoint together with the previous observation that $2^{k_2 - 5} I_Q \cap I_P = \emptyset$ one can estimate the previous 
expression further by

$$2^{-N k_2}\frac{1}{(S')^{1/2}} \frac{1}{S^{1/2}}
\sum_Q 
\sum_{P : |I_P| \leq |I_Q|, \omega_P \cap \omega_Q \neq \emptyset}
\left( 1 + \frac{\dist (I_P, I_Q)}{|I_Q|} \right) ^{-N} |I_P| \lesssim$$

$$
2^{-N k_2}\frac{1}{(S')^{1/2}} \frac{1}{S^{1/2}}\sum_Q |I_Q| = 2^{-N_{k_2}}\frac{1}{(S')^{1/2}} \frac{1}{S^{1/2}} S' =
2^{-N k_2}\frac{(S')^{1/2}}{S^{1/2}}.$$

Let us assume now that $N = 10 M$ where $M$ is the generic number in the hypothesis.
If we knew that 

$$\frac{(S')^{1/2}}{S^{1/2}}\lesssim 2^{ 9 M k_2}$$
then we would be done.

We are therefore left with understanding the opposite case when one has

\begin{equation}\label{100}
\frac{S^{1/2}}{(S')^{1/2}}\lesssim 2^{ - 9 M k_2}
\end{equation}

In this case, we write the left hand side of (\ref{last}) as

$$
\sum_{P, Q : |I_P| \leq |I_Q|} C_P C_Q
\langle \Phi_P, \Phi_Q \rangle = 
- \sum_{P, Q : |I_P| > |I_Q|} C_P C_Q
\langle \Phi_P, \Phi_Q \rangle + \sum_{P, Q } C_P C_Q
\langle \Phi_P, \Phi_Q \rangle =
$$

\begin{equation}\label{101}
- \sum_{P, Q : |I_P| > |I_Q| } C_P C_Q
\langle \Phi_P, \Phi_Q \rangle +
\langle \sum_P C_P \Phi_P, \sum_Q C_Q \Phi_Q \rangle .
\end{equation}

To estimate the first term in (\ref{101}), we can write

$$
\left|\sum_{P, Q : |I_P| > |I_Q|} C_P C_Q \langle \Phi_P, \Phi_Q \rangle\right| =
\left|\sum_{P, Q : |I_P| > |I_Q|, \omega_P \cap \omega_Q \neq \emptyset } C_P C_Q
\langle \Phi_P, \Phi_Q \rangle\right| = 
$$

$$
\left|
\sum_T \sum_{P\in T} C_P
\left(\sum_{P : |I_P| > |I_Q|, \omega_P \cap \omega_Q \neq \emptyset} C_Q \langle \Phi_P, \Phi_Q \rangle\right) 
\right| \leq 
$$

\begin{equation}\label{1011}
\left|
\sum_T \sum_{P\in T} C_P
\left(\sum_{Q : |I_P| > |I_Q|, \omega_P \cap \omega_Q \neq \emptyset, I_T \cap I_Q  \neq \emptyset} C_Q \langle \Phi_P, \Phi_Q \rangle\right)
\right| +
\end{equation}

$$
\left|
\sum_T \sum_{P\in T} C_P
\left(\sum_{Q : |I_P| > |I_Q|, \omega_P \cap \omega_Q \neq \emptyset, I_T \cap I_Q  = \emptyset} C_Q \langle \Phi_P, \Phi_Q \rangle\right)
\right|.
$$

It is not difficult to see that the first term in (\ref{1011}) can be written as

\begin{equation}\label{1012}
\sum_T \sum_{ P\in T} C_P \langle h_T, \Phi_Q \rangle
\end{equation}
where 

$$ h_T : = \sum_{Q\in \Q_T} C_Q \Phi_Q$$
and $\Q_T$ is defined to be the set of all tiles $Q$ with $I_Q\subseteq I_T$ for which there exists $P\in T$ so that $|I_P| > |I_Q|$
and $\omega_P \cap \omega_Q \neq \emptyset$. It is also not difficult to observe that all the intervals $I_Q$ for $Q\in \Q_T$ are disjoint.
In particular, (\ref{1012}) is smaller than

$$\sum_{T} (\sum_{P\in T} |C_P|^2 )^{1/2} (\sum_{P\in T} |\langle h_T, \Phi_Q \rangle |^2 )^{1/2} \lesssim
\frac{1}{S^{1/2}} \sum_T |I_T|^{1/2} \| h_T\|_2 \lesssim
\frac{1}{S^{1/2}} \sum_T |I_T|^{1/2} (\sum_{Q\in \Q_T} |C_Q|^2 )^{1/2} \lesssim
$$

$$
\frac{1}{S^{1/2}} \frac{1}{(S')^{1/2}} \sum_T |I_T|^{1/2} (\sum_{Q\in \Q_T} |I_Q| )^{1/2} \lesssim
\frac{1}{S^{1/2}} \frac{1}{(S')^{1/2}} \sum_T |I_T| = 
\frac{1}{S^{1/2}} \frac{1}{(S')^{1/2}} S = \frac{S^{1/2}}{(S')^{1/2}} \lesssim 2^{ - 9 M k_2}$$
by using  (\ref{100}).

Then, the second term in (\ref{1011}) can be estimated by

$$\sum_T
(\sum_{P\in T} |C_P|^2 )^{1/2}
\left(\sum_{P\in T}
\left(\sum_{Q : |I_P| > |I_Q|, \omega_P \cap \omega_Q \neq \emptyset, I_T \cap I_Q  = \emptyset} |C_Q| |\langle \Phi_P, \Phi_Q |\rangle\right) ^2 \right)^{1/2} \lesssim
$$

\begin{equation}\label{1013}
\frac{1}{S^{1/2}} \frac{1}{(S')^{1/2}}
\sum_T |I_T|^{1/2}
\left(\sum_{P\in T}
\left(\sum_{Q : |I_P| > |I_Q|, \omega_P \cap \omega_Q \neq \emptyset, I_T \cap I_Q  = \emptyset} |I_Q|^{1/2} |\langle \Phi_P, \Phi_Q \rangle |\right) ^2 \right)^{1/2}.
\end{equation}

Fix now $T$ and $P\in T$ and look at the corresponding inner term in (\ref{1013}). It can be estimated by

$$|I_P|^{- 1/2}
\sum_{Q : |I_P| > |I_Q|, \omega_P \cap \omega_Q \neq \emptyset, I_T \cap I_Q  = \emptyset}
\langle \widetilde{\chi}^N_{I_P}, \widetilde{\chi}^N_{I_Q}\rangle \lesssim
$$

$$|I_P|^{- 1/2}
\sum_{Q : |I_P| > |I_Q|, \omega_P \cap \omega_Q \neq \emptyset, I_T \cap I_Q  = \emptyset}
\left( 1 + \frac{\dist (I_Q, I_P)}{|I_P|}\right)^{- N} |I_Q| \lesssim
|I_P|^{- 1/2} 
\left( 1 + \frac{\dist (I_P, I_T^c)}{|I_P|}\right)^{- N} |I_P| =
$$

$$
|I_P|^{1/2} 
\left( 1 + \frac{\dist (I_P, I_T^c)}{|I_P|}\right)^{- N}
$$
since the time intervals $I_Q$ which contribute to the above sum are all disjoint.

In particular, (\ref{1013}) becomes smaller than

$$
\frac{1}{S^{1/2}}
\frac{1}{(S')^{1/2}}
\sum_T |I_T|^{1/2}
\left(\sum_{P\in T} |I_P|
\left( 1 + \frac{\dist (I_P, I_T^c)}{|I_P|}\right)^{- 2 N}\right) ^{1/2} =
$$

$$
\frac{1}{S^{1/2}}
\frac{1}{(S')^{1/2}}
\sum_T |I_T|^{1/2}
\left(\sum_{k=0}^{\infty} \sum_{P\in T : |I_P| = 2^{- k} |I_T| } 2^{- k} |I_T|
\left( 1 + \frac{\dist (I_P, I_T^c)}{|I_P|}\right)^{- 2 N}\right) ^{1/2} =
$$

$$
\frac{1}{S^{1/2}}
\frac{1}{(S')^{1/2}}
\sum_T |I_T|^{1/2}
\left(\sum_{k=0}^{\infty} 2^{- k} \sum_{P\in T : |I_P| = 2^{- k} |I_T| }
\left( 1 + \frac{\dist (I_P, I_T^c)}{|I_P|}\right)^{- 2 N}\right) ^{1/2} |I_T|^{1/2} \lesssim
$$

$$
\frac{1}{S^{1/2}}
\frac{1}{(S')^{1/2}}
\sum_T |I_T| = \frac{1}{S^{1/2}} \frac{1}{(S')^{1/2}} S =
\frac{S^{1/2}}{ (S')^{1/2}} \lesssim 2^{ - 9 M k_2},
$$
as before.

In conclusion, we are left with estimating the second term in (\ref{101}) namely the expresssion

$$|\langle \sum_P C_P \Phi_P, \sum_Q C_Q \Phi_Q \rangle |.
$$
One should first observe that by simply applying Cauchy-Schwartz, we get a bound of the form

$$\|\sum_P C_P \Phi_P\|_2 \cdot \|\sum_Q C_Q \Phi_Q\|_2$$
and this is $O(1)$ by a result from \cite{mtt-5}. Unfortunately, this is not enough 
since we need this time 
to get an extra factor of type $2^{- M k_2}$. We need to introduce a few notations and 
definitions to proceed further. 

If $I$ is an arbitrary dyadic interval, we say that a smooth function $\widetilde{\Phi}_I$
is a $relaxed$ $bump$ adapted to $I$ if and only if one has

\begin{equation}\label{relax}
\left|\frac{d^l}{d x^l} [ \widetilde{\Phi}_I (x) ]\right|\lesssim
|I|^{- l} 
\left( 1 + \frac{\dist (x, I)}{|I|}\right)^{-10}
\end{equation}
for any $1\leq l\leq 10$.

Then, if $Q$ is an arbitrary tile $Q = I_Q \times \omega_Q$ we say that 
$\widetilde{\Phi}_Q$ is a $relaxed$ $wave$ $packet$ adapted to $Q$ if and only if
$\widetilde{\Phi}_Q (x) = \widetilde{\Phi}_{I_Q} (x)\cdot 2^{2\pi i x\xi_Q}$ where
$\xi_Q$ is the center of the frequency interval $\omega_Q$ and $\widetilde{\Phi}_{I_Q}$
is any $relaxed$ $bump$ adapted to the interval $I_Q$. Note that this time
we do not assume arbitrary decay and also we do not assume that the Fourier transform of 
$\widetilde{\Phi}_Q$ has compact support.

The following Lemma will be useful.

\begin{lemma}\label{scalarproduct}
Let $Q_1$ and $Q_2$ be two tiles so that $|I_{Q_1}|\geq |I_{Q_2}|$. Then,
if $\widetilde{\Phi}_{Q_1}$ and $\widetilde{\Phi}_{Q_2}$ are relaxed wave packets
adapted to $Q_1$ and $Q_2$ respectively, one has the estimate

$$\left|
\langle \widetilde{\Phi}_{Q_1}, \widetilde{\Phi}_{Q_2}\rangle \right|\lesssim
\left( 1 + \frac{\dist (\omega_{Q_1}, \omega_{Q_2})}{|\omega_{Q_2}|}\right)^{ - 10}
\cdot
\int_{\R}\widetilde{\chi}_{I_{Q_1}}(x) \widetilde{\chi}_{I_{Q_2}}(x) dx 
\lesssim
$$

$$
\left( 1 + \frac{\dist (\omega_{Q_1}, \omega_{Q_2})}{|\omega_{Q_2}|}\right)^{ - 10}
\cdot
\left( 1 + \frac{\dist (I_{Q_1}, I_{Q_2})}{|I_{Q_1}|}\right)^{ - 10}\cdot
|I_{Q_2}|.$$

\end{lemma}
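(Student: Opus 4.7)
My plan is to expand the inner product by writing $\widetilde{\Phi}_{Q_j}(x) = \widetilde{\Phi}_{I_{Q_j}}(x)\, e^{2\pi i x\xi_{Q_j}}$, where $\xi_{Q_j}$ is the center of $\omega_{Q_j}$, so that
\[
\langle \widetilde{\Phi}_{Q_1}, \widetilde{\Phi}_{Q_2}\rangle = \int_{\R}\widetilde{\Phi}_{I_{Q_1}}(x)\,\overline{\widetilde{\Phi}_{I_{Q_2}}(x)}\,e^{2\pi i x(\xi_{Q_1}-\xi_{Q_2})}\,dx,
\]
and then split into two regimes according to whether the frequency separation is small or large relative to $|\omega_{Q_2}|$. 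In the near regime $\dist(\omega_{Q_1},\omega_{Q_2}) \lesssim |\omega_{Q_2}|$ the first factor on the right hand side of the desired estimate is $O(1)$, so the bound $|\widetilde{\Phi}_{I_{Q_j}}(x)| \lesssim \widetilde{\chi}_{I_{Q_j}}(x)$ coming from (\ref{relax}) (applied at $l=0$, or integrated from the $l=1$ estimate) immediately yields the integral bound claimed on the right.

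The interesting regime is $\dist(\omega_{Q_1},\omega_{Q_2}) \gg |\omega_{Q_2}|$, where I exploit the oscillation of $e^{2\pi i x(\xi_{Q_1}-\xi_{Q_2})}$ by integrating by parts $10$ times. Each integration by parts produces a factor $(2\pi|\xi_{Q_1}-\xi_{Q_2}|)^{-1}$ and shifts one derivative onto the amplitude $\widetilde{\Phi}_{I_{Q_1}}\overline{\widetilde{\Phi}_{I_{Q_2}}}$. Because $|I_{Q_2}| \le |I_{Q_1}|$, Leibniz combined with (\ref{relax}) gives
\[
\left| \frac{d^{10}}{dx^{10}}\bigl[\widetilde{\Phi}_{I_{Q_1}}(x)\overline{\widetilde{\Phi}_{I_{Q_2}}(x)}\bigr] \right| \lesssim |I_{Q_2}|^{-10}\,\widetilde{\chi}_{I_{Q_1}}(x)\,\widetilde{\chi}_{I_{Q_2}}(x),
\]
the smaller time scale dominating the derivative cost. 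Using the tile area relation $|I_{Q_2}|\,|\omega_{Q_2}|=1$, the gain becomes $|\xi_{Q_1}-\xi_{Q_2}|^{-10}|\omega_{Q_2}|^{10}$, and since $|\omega_{Q_1}| \le |\omega_{Q_2}|$ forces $|\xi_{Q_1}-\xi_{Q_2}| \gtrsim \dist(\omega_{Q_1},\omega_{Q_2})$ in this regime, this is comparable to the desired frequency decay factor $(1 + \dist(\omega_{Q_1},\omega_{Q_2})/|\omega_{Q_2}|)^{-10}$. Combined with the leftover amplitude integral this establishes the first bound in both regimes.

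The second inequality is a routine comparison of bumps at different scales. Since $|I_{Q_2}| \le |I_{Q_1}|$, the function $\widetilde{\chi}_{I_{Q_1}}$ is essentially constant on the effective support of $\widetilde{\chi}_{I_{Q_2}}$ with value $\lesssim (1 + \dist(I_{Q_1},I_{Q_2})/|I_{Q_1}|)^{-10}$, and $\int_{\R}\widetilde{\chi}_{I_{Q_2}}(x)\,dx \lesssim |I_{Q_2}|$. Multiplying these two bounds yields the second inequality. The only delicate point I anticipate is keeping the book-keeping tight through the Leibniz expansion: since the definition in (\ref{relax}) only grants $10$ derivatives, the exponent $10$ in the conclusion is essentially sharp, and one must be careful to distribute derivatives onto the smaller bump throughout the argument.
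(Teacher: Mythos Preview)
Your proposal is correct and follows essentially the same approach as the paper: the paper's proof is in fact just a sketch that says to exploit the oscillation via ``the usual integration by parts argument which should be performed ten times,'' leaving the details to the reader, and your write-up supplies exactly those details (the near/far split, the Leibniz bookkeeping showing the smaller scale $|I_{Q_2}|^{-1}$ dominates, and the routine bump comparison for the second inequality). Your caveat about the $l=0$ bound in (\ref{relax}) is a fair observation, but recovering it by integrating the $l=1$ estimate (at the cost of one unit of decay, which is harmless here) is the natural fix.
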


\begin{proof} Clearly, if both $\widetilde{\Phi}_{Q_1}$ and $\widetilde{\Phi}_{Q_2}$ would be
``real wave packets'' (therefore compactly supported in frequency) then the first factor
$\left( 1 + \frac{\dist (\omega_{Q_1}, \omega_{Q_2})}{|\omega_{Q_2}|}\right)$
has to be equal to $1$, otherwise the scalar product would be zero. In that case, 
the estimate simply becomes the usual estimate of the 
scalar product of two bump functions. Since this is not the case, one has instead
to take advantage of the oscillation of $\widetilde{\Phi}_{Q_2}$ in the situation when
$\left( 1 + \frac{\dist (\omega_{Q_1}, \omega_{Q_2})}{|\omega_{Q_2}|}\right)$ is a big number
by the usual integration by parts argument which should be performed ten times.

The straightforward details are left to the reader.

\end{proof}

The following lemma will also be important.

\begin{lemma}\label{L2}
Consider for each $Q\in \vec{\Q}$ a relaxed $L^2$-normalized wave packet 
$\widetilde{\Phi}_{Q}$ adapted to the tile $Q$. Then, if $(C_Q)_Q$ is a sequence of
complex numbers as before, then one has

$$\|\sum_{Q} C_Q \widetilde{\Phi}_{Q}\|_2 \lesssim 1.
$$

\end{lemma}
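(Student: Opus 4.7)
The plan is to square the $L^2$ norm and expand as a double sum
$$\left\|\sum_Q C_Q \widetilde{\Phi}_Q\right\|_2^2 = \sum_{Q,Q'} C_Q\overline{C_{Q'}}\langle \widetilde{\Phi}_Q,\widetilde{\Phi}_{Q'}\rangle.$$
By the symmetry $Q\leftrightarrow Q'$ I restrict to pairs with $|I_Q|\geq |I_{Q'}|$. For such pairs, Lemma \ref{scalarproduct} provides
$$|\langle \widetilde{\Phi}_Q,\widetilde{\Phi}_{Q'}\rangle|\lesssim \left(1+\frac{\dist(\omega_Q,\omega_{Q'})}{|\omega_{Q'}|}\right)^{-10}\left(1+\frac{\dist(I_Q,I_{Q'})}{|I_Q|}\right)^{-10}|I_{Q'}|,$$
which is the relaxed analogue of the estimate that drives Lemma \ref{bessel}: the first polynomial factor now plays the role that frequency orthogonality played for genuine wave packets.

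On the coefficient side, applying Lemma \ref{CP} to the one-element subtree $T'=\{Q\}$ gives $|C_Q|\lesssim |I_Q|^{1/2}/S'^{1/2}$ with $S':=\sum_{T\in\T'}|I_T|$, and the full statement of Lemma \ref{CP} gives $\sum_{T\in\T'}\sum_{Q\in T}|C_Q|^2\lesssim 1$. I would then fix $Q$ and partition the smaller tiles $Q'$ into shells labelled by the scale ratio $k\geq 0$ with $|I_{Q'}|=2^{-k}|I_Q|$ and the frequency-distance index $l\geq 0$ with $\dist(\omega_Q,\omega_{Q'})\sim 2^l|\omega_{Q'}|$. The rank-1 hypothesis together with the sparseness of the underlying family of quasi-cubes implies that, for each $(k,l)$, the eligible $Q'$ have essentially pairwise disjoint time intervals, and only $O(2^l)$ of their frequency intervals at scale $2^{-k}|\omega_Q|$ can sit in the shell. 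Combined with the two polynomial decay factors, Cauchy--Schwarz in the $Q'$ variable inside each shell yields a contribution of size $2^{-10l}\cdot 2^{-k/2}\cdot |I_Q|/S'\cdot\bigl(\sum_{Q'\in\text{shell}}|C_{Q'}|^2\bigr)^{1/2}$. Summing the geometric series in $k,l$ and then over $Q$, and applying Cauchy--Schwarz once more against the Lemma \ref{CP} normalization, collapses the whole double sum to $O(1)$.

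The main obstacle will be the geometric packing inside each $(k,l)$-shell. Since $\widetilde{\Phi}_Q$ lacks compact Fourier support, there is no genuine near-orthogonality between tiles at different frequency scales as in Lemma \ref{bessel}; every pair $(Q,Q')$ contributes something, and one must absorb all frequency mismatches solely through the polynomial factor $(1+\dist(\omega_Q,\omega_{Q'})/|\omega_{Q'}|)^{-10}$. The delicate point is to ensure that this polynomial decay decisively beats the combinatorial multiplicity of $O(2^l)$ candidate frequency intervals at each scale while simultaneously, via the spatial factor, avoiding a logarithmic loss when summing over the time intervals $I_{Q'}$ inside neighborhoods of $I_Q$. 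Both the sparseness of $\vec{\Q}$ and the rank-$1$ hypothesis are exactly what is needed to close this packing argument without incurring such a loss.
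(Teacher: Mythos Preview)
You are overlooking the single most important structural fact in this context, and as a result you are attempting to prove a much harder statement than the one actually required. Recall that Lemma~\ref{L2} is stated and used inside the proof of Lemma~\ref{delicatebessel}: every tile $Q\in\vec{\Q}$ satisfies $\dist(I_Q,S_{\vec{\Q}})/|I_Q|\sim 2^{k_2}$ with $k_2$ large. The paper observes at the outset of its proof (and already earlier in Section~6) that this forces two things: (i) whenever $|I_Q|\neq |I_{Q'}|$ one has $I_Q\cap I_{Q'}=\emptyset$, and (ii) every tree in $\T'$ is a one-tile tree, so $S'=\sum_Q|I_Q|$. With these two facts in hand the proof is essentially a two-line calculation: one expands the square, applies Lemma~\ref{scalarproduct}, uses $|C_Q|\lesssim |I_Q|^{1/2}/(S')^{1/2}$, and then the sum $\sum_{Q':|I_{Q'}|\leq|I_Q|}$ collapses to $O(|I_Q|)$ by the disjointness of the $I_{Q'}$ (both across scales by~(i) and within a scale by sparseness), giving $\frac{1}{S'}\sum_Q|I_Q|=1$.

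Your proposal instead treats $\vec{\Q}$ as a generic union of strongly disjoint trees and builds a shell decomposition in $(k,l)$. That route is not wrong in spirit, but it is considerably more delicate than necessary, and your packing claim (``the eligible $Q'$ have essentially pairwise disjoint time intervals'') is asserted rather than proved; in the general setting tiles inside the same tree have nested time intervals, so one would need to invoke the tree structure and strong disjointness carefully to justify it. None of that work is needed here: the hypothesis $\dist(I_Q,S_{\vec{\Q}})/|I_Q|\sim 2^{k_2}$ hands you global disjointness of the $I_Q$ for free, and you should use it.
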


\begin{proof} First of all, let us recall that since any $Q$ has the property that

$$\frac{\dist (I_Q, S_{\vec{Q}})}{|I_Q|} \sim 2^{ k_2}$$
and $k_2$ is a large positive integer, one has that every time $Q$ and $Q'$ are so that
$|I_Q|\neq |I_{Q'}|$ then one must have $I_Q \cap I_{Q'} = \emptyset$. This also implies
that our collection of tiles can contain only one-tile trees and so
$S' = \sum_{T\in \T'} |I_T| = \sum_{Q} |I_Q|$.

Using this, one can write

$$\|\sum_{Q} C_Q \widetilde{\Phi}_{Q}\|_2^2 =
|\langle \sum_{Q} C_Q \widetilde{\Phi}_{Q}, \sum_{Q'} C_{Q'} \widetilde{\Phi}_{Q'}
\rangle | = 
$$

$$|\sum_{Q, Q'}
C_Q C_{Q'}
\langle \widetilde{\Phi}_{Q}, \widetilde{\Phi}_{Q'}\rangle \lesssim
\sum_{Q, Q'}
\frac{|I_Q|^{1/2}}{(S')^{1/2}}
\frac{|I_{Q'}|^{1/2}}{(S')^{1/2}}
|\langle \widetilde{\Phi}_{Q}, \widetilde{\Phi}_{Q'}\rangle  | : = 
$$

$$
\frac{1}{(S')^{1/2}}
\frac{1}{(S')^{1/2}}
\sum_{Q, Q'}
|\langle \widetilde{\chi}^{\infty}_{Q}, \widetilde{\chi}^{\infty}_{Q'}\rangle| \lesssim
\frac{1}{S'}\sum_Q \left (
\sum_{Q': |I_{Q'}|\leq |I_Q|}|\langle \widetilde{\chi}^{\infty}_{Q}, \widetilde{\chi}^{\infty}_{Q'}\rangle|
\right).
$$
Using now our previous Lemma \ref{scalarproduct} together with the observations made at the 
beginning of the proof, it is not difficult to see that the last 
expression is smaller than

$$\frac{1}{S'}\sum_Q |I_Q| = 1$$
which ends the proof.

\end{proof}

Coming back now to our expression
$\langle \sum_P C_P \Phi_P, \sum_Q C_Q \Phi_Q \rangle $ we will do the following.
For each $Q$, split the corresponding $\Phi_Q$ as

$$\Phi_Q = \sum_{l\in \Z} \Phi_Q^l$$
where each $\Phi_Q^l$ is defined to be the old function $\Phi_Q$ multiplied by a  cut-off
bump function supported on an interval of comparable length with $I_Q$ but $l$ units
of lenght $|I_Q|$ away from $I_Q$. Since $\Phi_Q$ is a Schwartz function, we can further write
$\Phi_Q$ as

$$\Phi_Q = \sum_{l\in \Z} \frac{1}{( 1 + |l| )^N}\widetilde{\Phi}_Q^l.$$
As a consequence, our expression splits as

$$\sum_{l\in \Z} \frac{1}{( 1 + |l| )^N}
\langle \sum_Q C_Q \widetilde{\Phi}^l_Q, \sum_P C_P \Phi_P \rangle
:= I + II
$$
where

$$I := \sum_{l\in \Z : |l|\leq 2^{k_2 - 5}} \frac{1}{( 1 + |l| )^N}
\langle \sum_Q C_Q \widetilde{\Phi}^l_Q, \sum_P C_P \Phi_P \rangle
$$
and

$$II := \sum_{l\in \Z : |l| > 2^{k_2 - 5}} \frac{1}{( 1 + |l| )^N}
\langle \sum_Q C_Q \widetilde{\Phi}^l_Q, \sum_P C_P \Phi_P \rangle.
$$
Now, it is not difficult to remark that for each fixed $l$ with $|l| > 2^{k_2 - 5}$,
the function $\widetilde{\Phi}^l_Q$ is also a relaxed wave packet adapted to
$Q$. In particular, this implies that one can simply apply Cauchy-Schwartz
for term II, together with Lemma \ref{L2} to bound it by $2^{- M k_2}$ as desired.

It is therefore enough to estimate term $I$. Fix $l\in \Z$ so that
$|l|\leq 2^{k_2 - 5}$ and consider the expression

\begin{equation}\label{102}
\langle \sum_Q C_Q \widetilde{\Phi}_Q^l, \sum_P C_P \Phi_P \rangle.
\end{equation}
This time, we will take advantage of the fact that the functions $\widetilde{\Phi}_Q^l$
are all compactly supported.

First of all, let us denote by $\I$ the collection of all dyadic intervals $I$ for which there 
exists a $Q$ with $I_Q = I$. As we already remarked, since every $Q$ has the property that

$$2^{k_2} \leq \frac{\dist (I_Q, S_{\vec{Q}})}{|I_Q|} \leq 2^{k_2 + 1}$$
all these intervals $I$ are disjoint. Denote also by $\I^l$ the collection of all
dyadic intervals $I^l$ defined by $I^l : = I + l|I|$ for some $I\in \I$. It is also not difficult
and important to observe that the intervals in $\I^l$ have bounded overlap. 
Also, for any $Q$ we denote by $I_Q^l$ the interval $I_Q^l : = I_Q + l |I_Q|$. Clearly, each function $\widetilde{\Phi}_Q^l$ is supported on a certain fixed
enlargement (with a factor of $3$ say) of the interval $I_Q^l$. We will use these
observations and notations later on. 

Using these, we can estimate (\ref{102}) by

$$\langle \sum_Q C_Q \widetilde{\Phi}_Q^l, \sum_P C_P \Phi_P \rangle =
\sum_{P, Q} C_P C_Q \langle \widetilde{\Phi}_Q^l, \Phi_P \rangle = $$

$$\sum_T \sum_{P\in T} C_P \langle \sum_Q C_Q \widetilde{\Phi}_Q^l, \Phi_P \rangle =$$

$$\sum_T \sum_{P\in T} C_P \langle \sum_{Q : I^l_Q \subseteq I_T} C_Q \widetilde{\Phi}_Q^l, 
\Phi_P \rangle +
\sum_T \sum_{P\in T} C_P \langle \sum_{Q : I^l_Q \subseteq I_T^c} C_Q \widetilde{\Phi}_Q^l, 
\Phi_P \rangle : = \alpha + \beta.$$

\underline{Estimates for $\beta$.}

To understand $\beta$ we write

$$\beta \lesssim
\sum_T \left( \sum_{P\in T} |C_P|^2 \right)^{1/2}
\left(\sum_{P\in T}
|\langle \sum_{Q : I^l_Q \subseteq I_T^c} C_Q \widetilde{\Phi}_Q^l, \Phi_P \rangle |^2
\right)^{1/2}\lesssim
$$

\begin{equation}
\frac{1}{S^{1/2}}
\sum_T |I_T|^2 \left(\sum_{P\in T}
|\langle \sum_{Q : I^l_Q \subseteq I_T^c} C_Q \widetilde{\Phi}_Q^l, \Phi_P \rangle |^2
\right)^{1/2}.
\end{equation}
We will show next that 

\begin{equation}\label{104}
\left(\sum_{P\in T}
|\langle \sum_{Q : I^l_Q \subseteq I_T^c} C_Q \widetilde{\Phi}_Q^l, \Phi_P \rangle |^2
\right)^{1/2} \lesssim \frac{1}{(S')^{1/2}} |I_T|^{1/2}.
\end{equation}
If (\ref{104}) were true, then the estimate on $\beta$ could be completed as follows

$$\beta \lesssim 
\frac{1}{S^{1/2}}
\sum_T |I_T|^{1/2}\cdot \frac{1}{(S')^{1/2}} |I_T|^{1/2} =
\frac{1}{S^{1/2}}\frac{1}{(S')^{1/2}} \sum_T |I_T| =
\frac{1}{S^{1/2}}\frac{1}{(S')^{1/2}} S = 
\frac{S^{1/2} }{(S')^{1/2} }\lesssim 2^{- 9 M k_2}
$$
as desired. It is therefore enough to show (\ref{104}).

Fix $P\in T$. Then, the corresponding inner term in (\ref{104}) can be estimated by

$$|\langle \sum_{Q : I^l_Q \subseteq I_T^c} C_Q \widetilde{\Phi}_Q^l, \Phi_P \rangle |
\lesssim \sum_{Q : I^l_Q \subseteq I_T^c} |C_Q| 
|\langle \Phi_P, \widetilde{\Phi}_Q^l \rangle |\lesssim
$$

$$
\frac{1}{(S')^{1/2}}
\sum_{Q : I^l_Q \subseteq I_T^c} |I_Q|^{1/2} 
|\langle \Phi_P, \widetilde{\Phi}_Q^l \rangle | = 
\frac{1}{(S')^{1/2}}
\sum_{Q : I^l_Q \subseteq I_T^c} |I_P|^{- 1/2} 
|\langle \Phi_P^{\infty}, \widetilde{\Phi}_Q^{l, \infty} \rangle | =
$$

$$
\frac{1}{(S')^{1/2}}
|I_P|^{- 1/2} 
\sum_{Q : I^l_Q \subseteq I_T^c} 
|\langle \Phi_P^{\infty}, \widetilde{\Phi}_Q^{l, \infty} \rangle |,
$$
where $\Phi_P^{\infty} : = |I_P|^{1/2} \Phi_P$ and 
$\widetilde{\Phi}_Q^{l, \infty} : = |I_Q|^{1/2} \widetilde{\Phi}_Q^{l}$
and they are both $L^{\infty}$ normalized functions.

We claim now that 

\begin{equation}\label{105}
\sum_{Q : I^l_Q \subseteq I_T^c}
|\langle \Phi_P^{\infty}, \widetilde{\Phi}_Q^{l, \infty} \rangle | \lesssim
\left( 1 + \frac{\dist (I_P, I_T^c)}{|I_P|}\right)^{- m} |I_P|
\end{equation}
for any positive integer $m$, with the implicit constants depending on it.

If we assume the claim, the corresponding last term can be estimated further by

$$\frac{1}{(S')^{1/2}}
|I_P|^{1/2}\left( 1 + \frac{\dist (I_P, I_T^c)}{|I_P|}\right)^{- m}
$$
and as a consequence, the left hand side of (\ref{104}) becomes smaller than

$$\frac{1}{(S')^{1/2}}
\left(\sum_{P\in T}
\left( 1 + \frac{\dist (I_P, I_T^c)}{|I_P|}\right)^{- 2 m} |I_P| \right)^{1/2}
$$
and this as we have already seen before is smaller than
$\frac{1}{(S')^{1/2}} |I_T|^{1/2}$
as desired.

It is therefore enough to prove the previous claim (\ref{105}).

Split the left hand side of it as

$$
\sum_{Q : I^l_Q \subseteq I_T^c, |I^l_Q|\leq |I_P|}
|\langle \Phi_P^{\infty}, \widetilde{\Phi}_Q^{l, \infty} \rangle | + 
\sum_{Q : I^l_Q \subseteq I_T^c, |I^l_Q| > |I_P|}
|\langle \Phi_P^{\infty}, \widetilde{\Phi}_Q^{l, \infty} \rangle | : =
C_1 + C_2.
$$

To estimate $C_1$, let us assume that our collection $\I^l$ defined before can be listed as

$$\I^l = \{ I_1, ... , I_K \}.$$
Using this and also Lemma \ref{scalarproduct} one can write

$$
C_1 = \sum_{j=1}^K
\sum_{Q : I^l_Q \subseteq I_T^c, |I^l_Q|\leq |I_P|, I^l_Q = I_j}
|\langle \Phi_P^{\infty}, \widetilde{\Phi}_Q^{l, \infty} \rangle |
\lesssim \sum_{j=1}^K
\left( 1 + \frac{\dist (I_P, I_j)}{ |I_P|}\right)^{ - m} |I_j|$$
and the last sum is clearly smaller than

$$
\left( 1 + \frac{\dist (I_P, I_T^c)}{ |I_P|}\right)^{ - m} |I_P|
$$
as required by (\ref{105}).

To estimate $C_2$, this time we can write

$$
\sum_{Q : I^l_Q \subseteq I_T^c, |I^l_Q| > |I_P|}
|\langle \Phi_P^{\infty}, \widetilde{\Phi}_Q^{l, \infty} \rangle | =
\sum_{\l = 1}^{\infty}
\sum_{Q : I^l_Q \subseteq I_T^c, |I^l_Q| > |I_P|, |I^l_Q| = 2^{\l} |I_P|}
|\langle \Phi_P^{\infty}, \widetilde{\Phi}_Q^{l, \infty} \rangle |.
$$ 
Fix $\l$ and look at the corresponding inner sum. It is not difficult to remark that
for every $Q$ as there, one has $2^{k_2 + \l- 5} I_P \cap I_Q = \emptyset$. Tacking also into 
account the fact that all the functions $\widetilde{\Phi}_Q^{l, \infty}$ have compact support
and applying carefully several times Lemma \ref{scalarproduct} one obtains for $C_2$ 
the upper bound

$$2^{\l} \frac{1}{2^{(k_2 + \l) m}}
\left(1 + \frac{\dist (I_P, I_T^c)}{ |I_P|}\right)^{ - m} |I_P|
$$
which is fine, since it is an expression summable over $\l$.
This ends the discussion on $\beta$, we start now estimating term $\alpha$.

\underline{Estimates for $\alpha$.}

We now write

$$\alpha =
\sum_T \sum_{P\in T} C_P
\langle \sum_{Q: I^l_Q\subseteq I_T } C_Q \widetilde{\Phi}_Q^l, \Phi_P\rangle =
\sum_T \sum_{P\in T, Q : I^l_Q\subseteq I_T } C_P C_Q
\langle \widetilde{\Phi}_Q^l, \Phi_P\rangle.
$$
Fix $T$. We will show that

\begin{equation}\label{106}
\left|\sum_{P\in T, Q : I^l_Q\subseteq I_T } C_P C_Q
\langle \widetilde{\Phi}_Q^l, \Phi_P\rangle\right|
\lesssim \frac{1}{S^{1/2}}
\frac{1}{(S')^{1/2}} |I_T|.
\end{equation}
If we accept for a moment (\ref{106}) then $\alpha$ becomes smaller than

$$
\frac{1}{S^{1/2}}
\frac{1}{(S')^{1/2}} \sum_T |I_T| = \frac{1}{S^{1/2}}\frac{1}{(S')^{1/2}} S =
\frac{S^{1/2}}{(S')^{1/2}} \lesssim 2^{- 9 M k_2}
$$
which would be the desired upper bound.
We are therefore left with understanding (\ref{106}).

We split the left hand side of it as

$$
\left|\sum_{P\in T, Q : I^l_Q\subseteq I_T, |I^l_Q|\leq |I_P| } C_P C_Q
\langle \widetilde{\Phi}_Q^l, \Phi_P\rangle\right| + 
\left|\sum_{P\in T, Q : I^l_Q\subseteq I_T, |I^l_Q| > |I_P| } C_P C_Q
\langle \widetilde{\Phi}_Q^l, \Phi_P\rangle\right| : = A + B.$$

\underline{Estimates for $B$.}

Pick an index $1\leq j\leq K$ with the property that $I_j\subseteq I_T$ and consider
the corresponding sum

$$
\left|\sum_{P\in T, Q : I^l_Q\subseteq I_T, |I^l_Q| > |I_P|, I^l_Q = I_j } C_P C_Q
\langle \widetilde{\Phi}_Q^l, \Phi_P\rangle\right|.
$$
It can be further majorized by

$$\sum_{\l}
\left|\sum_{P\in T, Q : I^l_Q\subseteq I_T, |I^l_Q| > |I_P|, I^l_Q = I_j, |I^l_Q| = 2^{\l}|I_P| } C_P C_Q
\langle \widetilde{\Phi}_Q^l, \Phi_P\rangle\right|.
$$
Arguing as before (by using Lemma \ref{scalarproduct}) we deduce that the previous expression
can be estimated by

$$
\frac{1}{S^{1/2}}
\frac{1}{(S')^{1/2}}
2^{\l}\sum_{P\in T: |I_j| = 2^{\l}|I_P|}
\left( 1 + \frac{\dist (I_j, I_P)}{|I_P|}\right)^{- m} |I_j| \lesssim
\frac{1}{S^{1/2}}
\frac{1}{(S')^{1/2}} 2^{\l}
\frac{1}{2^{(k_2 + \l) m}} |I_j|.
$$
Now the above expression is clearly summable over $\l$ and the new bound is also summable
over $j$. In the end we clearly obain an upper bound of the form 
$$\frac{1}{S^{1/2}}
\frac{1}{(S')^{1/2}} |I_T|
$$
since all the intervals $I_j$ which contribute, are disjoint and included in $I_T$.

\underline{ Estimates for $A$.}

To estimate $A$ we simply have to come back to the original wave packets, which are compactly
suppoted in frequency. Since for every $Q$ the function $\widehat{\widetilde{\Phi}_Q^l  }$
is a bump adapted to the interval $\omega_Q$, we can split it accordingly as

$$\widetilde{\Phi}_Q^l = \sum_{\l}
\frac{1}{(1 + |\l|)^{10}}
\Phi_Q^{l, \l}
$$
where clearly, $\Phi_Q^{l, \l}$ is a wave packet adapted to the tile
$Q^{l, \l} : = I^l_Q \times \omega_Q^{\l}$ where $\omega_Q^{\l}$ is the interval defined by
$\omega_Q^{\l} : = \omega_Q + \l |\omega_Q|$.

As a consequence, our term $A$ can be majorized by

$$
\sum_{\l}
\frac{1}{(1 + |\l|)^{10}}
\left|\sum_{P\in T, Q : I^l_Q\subseteq I_T, |I^l_Q|\leq |I_P| } C_P C_Q
\langle \Phi_Q^{l, \l}, \Phi_P\rangle\right|.
$$
It is important to observe now that for each fixed $\l$ the corresponding set of tiles
$Q^{l,\l}$ is still strongly disjoint. Also, since both $\Phi_Q^{l, \l}$ and $\Phi_P$
have compact Fourier support, it is clear that $\langle \Phi_Q^{l, \l}, \Phi_P\rangle = 0$
unless $\omega_Q^{\l}\cap \omega_P \neq \emptyset$.

Fix $\l$ now. The inner sum above can also be written as

$$\left|\sum_{P\in T, Q : I^l_Q\subseteq I_T, \omega_Q^{\l}\cap \omega_P \neq \emptyset,
|\omega^{\l}_Q| > |\omega_P| } C_P C_Q
\langle \Phi_Q^{l, \l}, \Phi_P\rangle\right|.
$$
Denote also as before by $\Q_T$ the set of all tiles $Q$ with $I_Q^l \subseteq I_T$ for which there exists a tile $P\in T$
so that $\omega_Q^{\l}\cap \omega_P \neq \emptyset$ and $|\omega^{\l}_Q| > |\omega_P|$.
Note also that those $Q$ inside $\Q_T$ must have disjoint $I^l_Q$ intervals.
It is also not difficult to see that the above expression can also be written as

\begin{equation}\label{107}
|\sum_{P\in T} C_P \langle h_T, \Phi_P \rangle |,
\end{equation}
where $h_T := \sum_{Q\in \Q_T} C_Q \Phi_Q^{l, \l}$.
Then, (\ref{107}) is smaller than

\begin{equation}\label{108}
\left(\sum_{P\in T} |C_P|^2\right)^{1/2}
\left(\sum_{P\in T}|\langle h_T, \Phi_P \rangle|^2\right)^{1/2} \lesssim
\frac{|I_T|^{1/2}}{S^{1/2}}\cdot \|h_T\|_2.
\end{equation}
Now, one also has

$$\|h_T\|_2 \lesssim
(\sum_{Q\in\Q_T} |C_Q|^2 )^{1/2} \lesssim
\frac{1}{(S')^{1/2}} (\sum_{Q\in\Q_T} |I_Q|)^{1/2} =
$$

$$
\frac{1}{(S')^{1/2}} (\sum_{Q\in\Q_T} |I^l_Q|)^{1/2}
\lesssim \frac{1}{(S')^{1/2}} |I_T|^{1/2}
$$

Using this in (\ref{108}) finishes the proof.

\end{document}